\documentclass[11pt]{article}

\usepackage{amsmath,amssymb,amsthm}
\usepackage[unicode,breaklinks=true,colorlinks=true]{hyperref}
\usepackage[dvipsnames]{xcolor}
\usepackage[normalem]{ulem} % for \sout strike out
%\usepackage{refcheck}

%\usepackage[top=1in, bottom=1in, left=0.2in, right=2.3in, marginparwidth=2in, marginparsep=0.1in]{geometry}

% no \snt mode
\usepackage[top=1in, bottom=1in, left=1.25in, right=1.25in, marginparwidth=1in, marginparsep=0.1in]{geometry}

\numberwithin{equation}{section}
\newtheorem{theorem}{Theorem}[section]

\newtheorem{lemma}[theorem]{Lemma}
\newtheorem{definition}[theorem]{Definition} 
\newtheorem{corollary}[theorem]{Corollary}

\theoremstyle{remark}
\newtheorem{remark}[theorem]{Remark}
\newtheorem{example}[theorem]{Example}

\definecolor{darkblue}{rgb}{0,0,0.7}

%\newcommand{\cmt}[1]{{\color{Mahogany}{(Tsai: #1)}}}
%\newcommand{\cmt}[1]{{\color{blue}{(Tsai: #1)}}}

 % red
 % orange
 %magenta
 %sienna

\usepackage{marginnote} % it works also in math mode
%\newcommand{\dsnm}[1]{{\color{magenta}\bullet} \marginnote{\color{magenta}$\bullet$ \footnotesize #1}} % display side note
 % display side note

\newcommand{\bke}[1]{\left( #1 \right)}

\newcommand{\bket}[1]{\left\{ #1 \right\}}
\newcommand{\norm}[1]{\| #1 \|}

\newcommand{\al}{\alpha}

\newcommand{\de}{\delta}
\newcommand{\e}{\epsilon}
\newcommand{\ga}{{\gamma}}

\newcommand{\la}{\lambda}

\newcommand{\om}{{\omega}}
\newcommand{\si}{\sigma}
\newcommand{\td}{\tilde}

\newcommand{\De}{\Delta}

\newcommand{\R}{{\mathbb R }}\newcommand{\RR}{{\mathbb R }}
\newcommand{\N}{{\mathbb N}}
\newcommand{\Z}{{\mathbb Z}}

\newcommand{\cN}{{\mathcal N}}

\newcommand{\pd}{{\partial}}
\newcommand{\nb}{{\nabla}}
\newcommand{\lec}{\lesssim}

\newcommand{\I}{\infty}
 
\renewcommand{\div}{\mathop{\mathrm{div}}}
\newcommand{\curl}{\mathop{\mathrm{curl}}}
\newcommand{\supp}{\mathop{\mathrm{supp}}}

\newcommand{\donothing}[1]{{}}

\newcommand{\EQ}[1]{\begin{equation}\begin{split} #1 \end{split}\end{equation}}
\newcommand{\EQN}[1]{\begin{equation*}\begin{split} #1 \end{split}\end{equation*}}

\DeclareMathOperator*{\esssup}{ess\,sup}

\makeatletter
\newcommand{\xRightarrow}[2][]{\ext@arrow 0359\Rightarrowfill@{#1}{#2}}
\makeatother

\newcommand{\loc}{\mathrm{loc}} 
\newcommand{\uloc}{\mathrm{uloc}}

\usepackage{accents}

\newcommand{\barT}{\overline T}
\newcommand{\pv} {\mathop{\mathrm{p.v.\!}}}

% ================================================================================

\begin{document}
\title{Global existence, regularity, and uniqueness of infinite energy solutions to the Navier-Stokes equations}
\author{Zachary Bradshaw and Tai-Peng Tsai}
\date{\today}
\maketitle 

\begin{abstract} 
This paper addresses several problems associated to local energy solutions (in the sense of Lemari\'e-Rieusset)
to the Navier-Stokes equations with initial data which is sufficiently small at large or small scales as measured using truncated Morrey-type quantities, namely:
(1) global existence for a class of data including the critical $L^2$-based Morrey space;
(2)  initial and eventual regularity of local energy solutions to the Navier-Stokes equations with initial data sufficiently small at small or large scales; (3) small-large uniqueness of local energy solutions for data in the critical $L^2$-based Morrey space. 
A number of interesting corollaries are included, including eventual regularity in familiar
Lebesgue, Lorentz, and Morrey spaces, a new local generalized Von Wahl uniqueness criteria, as well as regularity and uniqueness for local energy solutions with small discretely self-similar data.   

\end{abstract}

%\tableofcontents

\section{Introduction}\label{sec.intro}

The Navier-Stokes equations describe the evolution of a viscous incompressible fluid's velocity field $u$ and associated scalar pressure $p$.  In particular, $u$ and $p$ are required to satisfy
\EQ{\label{eq.NSE}
&\partial_tu-\Delta u +u\cdot\nabla u+\nabla p = 0,
\\& \nabla \cdot u=0,
}
in the sense of distributions.  For our purpose, \eqref{eq.NSE} is applied on $\R^3\times (0,\I)$ and $u$ evolves from a prescribed, divergence free initial data $u_0:\R^3\to \R^3$.

In the classical paper \cite{leray}, J.~Leray constructed  global-in-time weak solutions to \eqref{eq.NSE} on $\R^4_+=\R^3\times (0,\infty)$ for any divergence free vector field $u_0\in L^2(\R^3)$.  Leray's solution $u$ satisfies the following properties:
\begin{enumerate}
\item $u\in L^\I(0,\I;L^2(\R^3))\cap L^2(0,\I;\dot H^1(\R^3))$,
\item $u$ satisfies the weak form of \eqref{eq.NSE},
\[
\iint -u \pd_t \zeta + \nb u:\nb \zeta + (u \cdot \nb )u \cdot \zeta = 0,\quad \forall \zeta \in C^\infty_c(\R^4_+;\R^3), \quad \div \zeta=0,
\]
\item $u(t)\to u_0$ in $L^2(\R^3)$ as $t\to 0^+$,
\item $u$ satisfies the \emph{global energy inequality}: For all $t>0$,
\[
\int_{\R^3} |u(x,t)|^2\,dx +2 \int_0^t \int_{\R^3} |\nabla u(x,t)|^2\,dx\,ds \leq \int_{\R^3} |u_0(x)|^2\,dx.
\]
\end{enumerate}
 The above existence result was extended to domains by Hopf in \cite{Hopf}.
We refer to the solutions constructed by Leray as \emph{Leray's original solutions} and refer to 
any solution satisfying the above properties as a \emph{Leray-Hopf weak solution}.   Note that, based on their construction, Leray's original solutions satisfy additional properties.  For example, they are suitable in the sense of \cite{CKN}; see \eqref{CKN-LEI}, this is proven in \cite[Proposition 30.1]{LR} and \cite{BCI}. Leray-Hopf weak solutions, on the other hand, are not known to be suitable generally. 

Although many important questions about these weak solutions remain open, e.g.,~uniqueness and global-in-time regularity, some positive results are available.  In particular, it is known that the singular sets of Leray-Hopf weak solutions which are suitable are compact in space-time.  This follows from Leray \cite[(6.4)]{leray}, and the partial regularity results of Scheffer \cite{VS76b} and Cafferelli, Kohn, and Nirenberg \cite{CKN} (see also \cite{LR} and \cite[Chap.~6]{Tsai-book}).

In his book \cite{LR}, Lemari\'e-Rieusset introduced a local analogue of suitable Leray-Hopf weak solutions called \emph{local energy solutions}.  These solutions evolve from uniformly locally square integrable data  $u_0\in L^2_{\uloc}$. Here, for $1\le q \le \infty$, $L^q_{\uloc}$ is the space of functions on $\R^3$ with finite norm
\[
\norm{u_0}_{L^q_{\uloc}} :=\sup_{x \in\R^3} \norm{u_0}_{L^q(B(x,1))}<\infty.
\]
We also denote
\[
E^q = \overline{C_c^\I(\R^3)}^{L^q_{\uloc}},
\]
the closure of $C_c^\I(\R^3)$ in $L^q_{\uloc}$-norm.
Having a notion of weak solution in a broader class than Leray's is useful when analyzing initial data in critical spaces such as the Lebesgue space $L^3$, the Lorentz space $L^{3,\I}=L^3_w$, or the Morrey space $M^{2,1}$, all of which embed in $L^2_{\uloc}$ but not in $L^2$ (see \cite{JiaSverak} for an example where this was crucial).  By \emph{critical spaces} we mean spaces for which the norm of $u$ is scaling invariant.  It is in such spaces that many arguments break down.  For example, $L^\I(0,T;L^3)$ is a regularity class for Leray-Hopf solutions \cite{ESS}, but this in unknown for $L^\I(0,T;L^{3,\I})$. 

The following definition is motivated by those found in \cite{LR,KiSe,JiaSverak-minimal,JiaSverak}.

\begin{definition}[Local energy solutions]\label{def:localLeray} Let $0<T\leq \I$. A vector field $u\in L^2_{\loc}(\R^3\times [0,T))$ is a local energy solution to \eqref{eq.NSE} with divergence free initial data $u_0\in L^2_{\uloc}(\R^3)$, denoted as $u \in \cN(u_0)$, if:
\begin{enumerate}
\item for some $p\in L^{3/2}_{\loc}(\R^3\times [0,T))$, the pair $(u,p)$ is a distributional solution to \eqref{eq.NSE},
\item for any $R>0$, $u$ satisfies
\begin{equation}\notag
\esssup_{0\leq t<R^2\wedge T}\,\sup_{x_0\in \R^3}\, \int_{B_R(x_0 )}\frac 1 2 |u(x,t)|^2\,dx + \sup_{x_0\in \R^3}\int_0^{R^2\wedge T}\int_{B_R(x_0)} |\nabla u(x,t)|^2\,dx \,dt<\infty,\end{equation}
\item for any $R>0$, $x_0\in \R^3$, and $0<T'< T $, there exists a function of time $c_{x_0,R}(t)\in L^{3/2}(0,T')$\footnote{The constant $c_{x_0,R}(t)$ can depend on $T'$ in principle. This does not matter in practice and we omit this dependence.} so that, for every $0<t<T'$  and $x \in B_{2R}(x_0)$  
\EQ{ \label{pressure.dec}
&p(x,t)=-\Delta^{-1}\div \div [(u\otimes u )\chi_{4R} (x-x_0)]
\\&\quad - \int_{\R^3} (K(x-y) - K(x_0 -y)) (u\otimes u)(y,t)(1-\chi_{4R}(y-x_0))\,dy 
+ c_{x_0,R}(t),
}
in $L^{3/2}(B_{2R}(x_0)\times (0,T'))$
where  $K(x)$ is the kernel of $\Delta^{-1}\div \div$,
 $K_{ij}(x) = \pd_i \pd_j \frac {-1}{4\pi|x|}$, and $\chi_{4R} (x)$ is the characteristic function for $B_{4R}$. 
\item for all compact subsets $K$ of $\R^3$ we have $u(t)\to u_0$ in $L^2(K)$ as $t\to 0^+$,
\item $u$ is suitable in the sense of Caffarelli-Kohn-Nirenberg, i.e., for all cylinders $Q$ compactly supported in  $ \R^3\times(0,T )$ and all non-negative $\phi\in C_c^\infty (Q)$, we have  the \emph{local energy inequality}
\EQ{\label{CKN-LEI}
&%\int |u(t)|^2\phi \,dx +
2\iint |\nabla u|^2\phi\,dx\,dt 
\\&\leq %\int |v_0|^2\phi \,dx+ 
\iint |u|^2(\partial_t \phi + \Delta\phi )\,dx\,dt +\iint (|u|^2+2p)(u\cdot \nabla\phi)\,dx\,dt,
}
\item the function
\[
t\mapsto \int_{\R^3} u(x,t)\cdot {w(x)}\,dx
\]
is continuous in $t\in [0,T)$, for any compactly supported $w\in L^2(\R^3)$.
\end{enumerate}
\end{definition}

For a given divergence free $u_0\in L^2_{\uloc}$, let $\mathcal{N}(u_0)$ denote the set of all local energy solutions with initial data $u_0$.

Our definition of local energy solutions is slightly different than the definitions from \cite{LR,KiSe,JiaSverak-minimal,JiaSverak}.  The definitions used in \cite{KiSe,JiaSverak-minimal,JiaSverak} require the data be in $E^2$,  which have some very mild decay at spatial infinity.  The pressure representation \eqref{pressure.dec} is replaced in \cite{JiaSverak-minimal,JiaSverak} by a very mild decay assumption on $u$, namely
\[
\lim_{|x_0|\to \I} \int_0^{R^2}\int_{B_R(x_0)} |u(x,t)|^2\,dx\,dt=0, \quad \forall R>0 .
\]
This condition implies a pressure representation like \eqref{pressure.dec} is valid (this is mentioned in \cite{JiaSverak-minimal} and explicitly proven in \cite{MaMiPr,KMT}).  If the data is only in $L^2_{\uloc}$, the above decay condition is unavailable and, therefore, we must build the pressure formula into the definition.   
In our arguments, the only reason to assume $u_0\in E^2$ would be to obtain the pressure formula \eqref{pressure.dec}.  To ensure full generality, it is thus better to assume \eqref{pressure.dec} explicitly and not impose decay on $u_0$.

Kikuchi and Seregin give another definition of local energy solutions in \cite{KiSe} which more closely resembles ours.  In \cite{KiSe}, \eqref{pressure.dec} is only assumed when $R=1$. Our definition is thus considerably stronger. Both definitions allow ``local energy estimates'' for $u_0\in L^2_{\uloc}$, but only ours leads to the estimate for all scales.

In  \cite{LR,LR-Morrey} (also see \cite{LR2}), Lemari\'e-Rieusset constructed 
local in time  local energy solutions if $u_0$ belongs to $L^2_\uloc$, and
global in time local energy solutions if $u_0$ belongs to $E^2$ or the Morrey space $M^{2,1}$ (see definition later in this section).
Kikuchi and Seregin \cite{KiSe} constructed global solutions for data in $E^2$ with more details and prove they satisfy the pressure formula in Definition \ref{def:localLeray} but with $R=1$.  
Recently, Maekawa, Miura, and Prange constructed local energy solutions on the half-space \cite{MaMiPr}.  This is a non-trivial extension of the whole-space case and required a novel treatment of the pressure. 
More recently, Kwon and Tsai \cite{KwTs} constructed global in time local energy solutions for non-decaying $u_0$ in $L^3_\uloc+ E^2$ with slowly decaying oscillation.   Also, Li constructed local energy solutions for the fractional Navier-Stokes equations \cite{Li}.

Naturally, less is known about local energy solutions than Leray's original solutions.  For example, Leray-Hopf weak solutions that satisfy the local energy inequality \emph{eventually regularize} in the sense that the set of singular times is compactly supported. Leray proved this in \cite[paragraph 34]{leray}, giving an upper bound of the set of singular times in \cite[(6.4)]{leray}.  Analogous results are currently unavailable for local energy solutions.  Indeed, it is speculated in \cite{BT1} that eventual regularity does not hold  for a discretely self-similar solution with $u_0 \in L^{3,\infty}(\R^3)$ if the solution has a local singularity.  Similarly, global existence is known in the Leray-Hopf class for any initial data in $L^2$, but is not known in the local energy class for any data in $L^2_\uloc$.

This paper is motivated by the problem of identifying similarities and differences between Leray-Hopf weak solutions and local energy solutions.  We address three subjects: eventual and initial regularity, global existence, and uniqueness. There are several themes that unify our resutls. First, 
our proofs are all based on the local energy methods in \cite{LR,JiaSverak-minimal,Jia-uniqueness}.  Second, the conditions in all of our results involve smallness of quantities closely associated with Morrey spaces. Our results shed light on the properties of local energy solutions with data in a variety of familiar function spaces as well as the regularity of discretely self-similar solutions to the Navier-Stokes equations.

For a solution $u$ in $\R^4_+$, we say that $(x,t)$ is a \emph{singular point} of $u$ if $ u \notin L^\I(B(x,r)\times(t-r^2,t))$ for any $r>0$. 
The set of all singular points is the \emph{singular set} of $u$.
We say that $t$ is a \emph{singular time} if there is a singular point $(x,t)$ for some $x$.  We say a solution $u$ has \emph{eventual regularity} if there is $t_1 < \infty$ such that $u$ is regular at $(x,t)$ whenever $t_1\le t$.  We say $u$ has \emph{initial regularity} if there exists $t_2$ such that $u$ is regular at $(x,t)$ whenever $0<t<t_2$. 

The following is our main theorem concerning eventual and initial regularity of solutions in the local energy class.

\begin{theorem}[Initial and eventual regularity]\label{thrm.littleo}
There exist small positive constants $\e_1$ and $c_0$ such that the following hold. Assume $u_0\in L^2_{\uloc}(\R^3)$, is divergence free and $u\in \mathcal N(u_0)$.  Let
\[
N_R^0 := \sup_{x_0\in \R^3} \frac 1 R \int_{B_R(x_0)}|  u_0|^2\,dx.
\]
\begin{enumerate}
\item 
If there exists $R_0>0$ so that
\begin{align}
\label{cond.infinity} \sup_{R\geq R_0} N^0_R< \e_1,
\end{align}
then $u$ has eventual regularity.  Moreover, if $3c_0R_0^2/4\leq t$, then 
\[
t^{1/2}\|u(\cdot,t)\|_{L^\I}\lesssim  ( \sup_{R\geq R_0} N^0_R)^{1/2} <\I.
\] 
\item If there exists $R_0>0$ so that
\begin{align}
\sup_{R\leq R_0} N^0_R< \e_1,
\end{align} 
then $u$ has initial regularity.   Moreover, if $t\leq c_0 R_0^2$, then%\snm{The proof on page 13 shows $t<c_0R_0^2$. Do we use this upper bound in the proof? ZB: We use this for uniqueness, but we are vague about constants.}
\[
t^{1/2}\|u(\cdot,t)\|_{L^\I}\lesssim (\sup_{R\leq R_0} N^0_R)^{1/2} <\I.
\]  
\item If $u_0$ satisfies 
\begin{align}\label{cond.everywhere} 
  \sup_{R>0} N^0_R< \e_1,
\end{align} 
then the set of singular times of $u$ in $\R^3\times (0,\I)$ is empty.  Moreover, for all $t> 0$,
\[
t^{1/2}\|u(\cdot,t)\|_{L^\I}\lesssim ( \sup_{R>0} N^0_R)^{1/2} <\I.
\]
\end{enumerate}
\end{theorem}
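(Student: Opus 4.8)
The plan is to deduce all three parts from a single scale-localized $\e$-regularity statement, using the heuristic that a local energy solution at time $t$ is governed by the data measured at length scale $\sqrt t$, together with Caffarelli--Kohn--Nirenberg theory, which applies because every $u\in\cN(u_0)$ is suitable (item (5) of Definition \ref{def:localLeray}). First note that $N_R^0$ is scale-critical: under $u_0\mapsto\la u_0(\la\,\cdot)$ one checks $N_R^0[\la u_0(\la\cdot)]=N_{\la R}^0[u_0]$, and the induced map $u\mapsto u_\la(x,t)=\la u(\la x,\la^2 t)$ preserves $\cN$. Hence it suffices to prove, at the reference scale $R=1$, that smallness of the data's Morrey quantity propagates into a bound on the rescaled solution at the unit time; the three parts then follow by choosing $\la\sim1/\sqrt t$ so that the observation scale $\sqrt t$ is matched to the scale at which the data is assumed small.

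The engine is a localized energy estimate. Fix $x_0$ and a cutoff $\phi$ adapted to $B_{2R}(x_0)\times(0,t)$ with $\phi\equiv1$ on $B_R(x_0)$, $|\nb\phi|\lec1/R$, $|\pd_t\phi+\Delta\phi|\lec1/R^2$, and insert it into the local energy inequality \eqref{CKN-LEI}. I would split the pressure via \eqref{pressure.dec} into a local Calder\'on--Zygmund part, controlled by $\int_{B_{4R}}|u|^2$, and a nonlocal tail; writing $\alpha_\rho(s)=\frac1\rho\int_{B_\rho(x_0)}|u(s)|^2$ and using $|K(x-y)-K(x_0-y)|\lec R|x_0-y|^{-4}$ on the far field, the tail is bounded by a geometric sum $\sum_{j\ge0}2^{-3j}\alpha_{2^jR}(s)$ of local energies at the larger dyadic scales. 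Assembling the terms and normalizing by $1/R$ yields, for the dimensionless quantity
\[
\mathcal A_R(t):=\sup_{x_0}\Big(\frac1R\int_{B_R(x_0)}|u(t)|^2+\frac1R\int_0^t\!\!\int_{B_R(x_0)}|\nb u|^2\Big),
\]
a closed inequality of the schematic form
\[
\mathcal A_R(t)\lec N_R^0+\big(\sup_{s\le t}\mathcal A_R(s)\big)^{3/2}+\frac{t}{R^2}\,\Big(\sup_{s\le t}\sum_{j\ge0}2^{-3j}\mathcal A_{2^jR}(s)\Big)\big(\sup_{s\le t}\mathcal A_{2R}(s)\big)^{1/2},
\]
where the cubic term comes from the convective nonlinearity and the local pressure, the last term from the pressure tail, and the remaining first-order terms are absorbed into the prefactor $t/R^2$.

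The three regimes are distinguished by how the error terms are made small. Taking $t=c_0R^2$ fixes the prefactor $t/R^2=c_0$. For part (1) every scale $2^jR\ge R\ge R_0$ lies in the small regime, so the weighted sum is $\lec\sup_{\rho\ge R_0}N_\rho^0<\e_1$; for part (2) the scales $2^jR\le R_0$ are small by hypothesis while the finitely many larger scales contribute $\lec(R/R_0)\|u_0\|_{L^2_\uloc}^2$, which is small because $R\sim\sqrt t\ll R_0$ for short times; part (3) is the intersection of both. In each case I would run a continuity argument in $t$: since the right-hand side is the sum of the genuinely small $N_R^0$, a superlinear term $\mathcal A_R^{3/2}$, and a term carrying the small factor $c_0$ times controlled higher-scale energies, choosing $\e_1$ and $c_0$ small forces $\sup_{s\le t}\mathcal A_R(s)\lec N_R^0$ throughout $0<t\le c_0R^2$ (the higher-scale energies in the tail being controlled separately by the elementary local-existence bound on $\cN(u_0)$). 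Finally, $\mathcal A_R\lec N_R^0\le\e_1$ feeds into $\e$-regularity: Gagliardo--Nirenberg bounds $\frac1{R^2}\iint_{Q_R}|u|^3$ and the pressure formula bounds $\frac1{R^2}\iint_{Q_R}|p-\bar p|^{3/2}$ by a power of $\mathcal A_R$, so for $\e_1$ small the quantitative Caffarelli--Kohn--Nirenberg criterion applies on $Q_{R/2}$ and yields $\|u\|_{L^\infty(B_{R/2}(x_0))}\lec R^{-1}(\mathcal A_R)^{1/2}\lec R^{-1}(N_R^0)^{1/2}$; undoing the scaling $R\sim\sqrt t$ gives $t^{1/2}\|u(\cdot,t)\|_{L^\infty}\lec(N^0)^{1/2}$, and unioning over $R\ge R_0$ (resp.\ $R\le R_0$, resp.\ all $R$) covers the asserted time ranges.

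The main obstacle is the nonlocal pressure tail and the resulting coupling across scales: the estimate for $\mathcal A_R$ is not closed at a single scale but involves all larger scales $2^jR$, so one must propagate a whole family of local energies simultaneously. The dichotomy between parts (1) and (2) lives entirely in how this coupling is broken --- by global smallness of the data at scales $\ge R_0$ in the eventual-regularity case, versus by the short-time factor $t/R^2$ together with $R\ll R_0$ in the initial-regularity case --- and verifying that the bootstrap closes uniformly, with the far-field energies staying controlled over the relevant time window, is the delicate point.
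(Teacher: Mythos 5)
Your overall strategy---an a priori Morrey-type local energy estimate at scale $R\sim\sqrt t$, fed into the Caffarelli--Kohn--Nirenberg $\e$-regularity criterion, then a union over scales matched to time---is exactly the paper's (its Lemma \ref{lemma.JiaSve}, adapted from Jia--\v{S}ver\'ak, plus Lemma \ref{thrm.epsilonreg}), but there are two genuine gaps in how you set up and close the a priori estimate, and both sit precisely at what you call ``the delicate point.'' First, the cross-scale coupling you introduce is an artifact, and your proposed way of handling it fails for part (2). Because $\mathcal A_R$ carries a supremum over all centers $x_0\in\R^3$, the dyadic pressure tail collapses to a single scale: covering $B_{2^jR}(x_0)$ by $\sim 2^{3j}$ balls of radius $R$ gives $\mathcal A_{2^jR}\lesssim 2^{2j}\mathcal A_R$, hence $\sum_{j\ge0}2^{-3j}\mathcal A_{2^jR}\lesssim \mathcal A_R$, and the estimate closes at scale $R$ with no reference to the data at any larger scale. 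This observation is exactly why the paper's Lemma \ref{lemma.JiaSve} holds at every scale $r$ on $[0,\sigma(r)r^2]$ with no smallness hypothesis elsewhere, and why part (2) is no harder than part (1). Your alternative---controlling the solution's energies $\mathcal A_{2^jR}(s)$ at scales $2^jR>R_0$ by the data there, or by ``the elementary local-existence bound on $\cN(u_0)$''---does not work: item 2 of Definition \ref{def:localLeray} gives only qualitative finiteness, while the quantitative bound at scale $\rho$ is valid only on $[0,\sigma(\rho)\rho^2]$, $\sigma(\rho)=c_0\min\{(N^0_\rho)^{-2},1\}$; in part (2) the data may be large above $R_0$, where $N^0_\rho$ can grow like $\rho^2\|u_0\|_{L^2_\uloc}^2$, so $\sigma(\rho)\rho^2\to 0$ as $\rho\to\infty$ and those energies are simply not controlled on the fixed window $[0,c_0R^2]$. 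Your bound ``$\lesssim (R/R_0)\|u_0\|_{L^2_\uloc}^2$'' also conflates the data quantity $N^0$ with the solution quantity $\mathcal A$.

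Second, your bootstrap invokes ``a continuity argument in $t$,'' but continuity is not available in this generality. For $u_0\in L^2_\uloc\setminus E^2$ a local energy solution is only weakly continuous in time (item 6 of Definition \ref{def:localLeray}), so $t\mapsto \mathcal A_R(t)$ is merely $L^\infty_\loc$; with a sup-form inequality like yours, nothing prevents $\sup_{s\le t}\mathcal A_R(s)$ from jumping from the small admissible branch of $\{x: x\le C N^0_R+Cx^{3/2}\}$ to the large one, which is exactly where a continuity argument breaks. The paper flags this explicitly and resolves it by keeping the estimate in integral form, $f(t)\le a+b\int_0^t (f+f^m)\,ds$ (which is what the local energy inequality naturally produces), and proving a Gr\"onwall lemma valid for possibly discontinuous $f$ (Lemma \ref{lemma.gronwall}). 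Both gaps are repairable---repairing them essentially reproduces the paper's Lemmas \ref{lemma.gronwall} and \ref{lemma.JiaSve}---after which your $\e$-regularity step and the union over scales go through as in the paper.
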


Note that $R_0$ depends on $u_0$ but is independent of $u\in \mathcal N(u_0)$.
Also note that Theorem \ref{thrm.littleo} does not assume $u_0 \in E^2$.

Conditions \eqref{cond.infinity}--\eqref{cond.everywhere} naturally lead us to consider initial data in Lorentz and Morrey spaces.  Recall that a vector field $f$ belongs to the Lorentz space $L^{p,q}$ for some $0<p<\I$ and $0<q\leq \I$ if, setting
\[
\|f\|_{L^{p,q}} := 
\begin{cases}
\bigg( \int_{0}^\I  \si^{q-1} | \{  x:|f(x)|>\si  \} |^{q/p} \,d\si   \bigg)^{1/q}& \text{ if }q<\I
\\   \sup_{\si>0} \big(  \si | \{  x:|f(x)|>\si  \} |^{1/p} \big) & \text{ if }q=\I
\end{cases} 
\] we have $\|f\|_{L^{p,q}}<\I$.  For $p>0$ and $s\leq n$, the
Morrey spaces $M^{p,s}$ contain vector fields such that 
\EQ{\label{Mps}
\|f\|_{M^{p,s}}:=\bigg(  \sup_{x_0\in \R^3} \sup_{R>0} \, \frac 1 {R^s}\int_{B_R(x_0)} | f|^p\,dx \bigg)^{1/p}<\I.
}
We also denote by $\td M^{p,s}$ the closure of $C^\infty_c$ in $M^{p,s}$-norm.
When we are only concerned with high frequencies, we can omit the low frequency behavior and consider the non-homogeneous Morrey spaces with norms
\EQ{\label{mps1}
\|f\|_{M^{p,s}_{\leq a}}:=\bigg(  \sup_{x_0\in \R^3} \sup_{0<R\leq a} \, \frac 1 {R^s}\int_{B_R(x_0)} | f|^p\,dx \bigg)^{1/p}<\I,
}
and similarly define $M^{p,s}_{< a}$ (of course, $M^{p,s}_{< a} = M^{p,s}_{\le a}$ and we will use these notions interchangeably). 
We refer to $M^{p,s}_{\leq 1}$ as $m^{p,s}$. 

Condition \eqref{cond.everywhere} means exactly that $\norm{u_0}_{M^{2,1}}^2 < \e_1$. A global regular solution for small data in $M^{2,1}$ is constructed by Kato in \cite{Kato} (see also Taylor \cite{Taylor}). Part 3 of Theorem \ref{thrm.littleo} asserts regularity for \emph{all} local energy solutions with $u_0$ sufficiently small in $M^{2,1}$ (or in $L^{3,\infty}$, as $L^{3,\infty}\subset M^{2,1}$, see Lemma \ref{lemma62}). Alternatively, this also follows from our uniqueness theorem below, Theorem \ref{thrm.uniquenessa}. 

For the Navier-Stokes equations, the most important examples of Lorentz or Morrey spaces are $L^{3,\I}$ and $M^{2,1}$.  These are critical spaces in the sense that they are dimensionless when computed for velocity fields.    Theorem \ref{thrm.littleo} leads to the following corollary on local energy solutions in familiar spaces.   
 
\begin{corollary}\label{cor.morrey}
Assume $u_0\in L^2_{\uloc}$ is divergence free and $u\in \cN(u_0)$.
\begin{enumerate}
\item If $u_0\in M^{2,r}$ where $0\leq r < 1$, then $u$ has eventual regularity and $t^{1/2}\|u(\cdot,t)\|_{L^\I}$ is bounded for sufficiently large $t$.  If $u_0\in M^{2,r}$ and $1<r\leq 3$, then $u$ has initial regularity  and $t^{1/2}\|u(\cdot,t)\|_{L^\I}$ is bounded for sufficiently small $t$.
\item  If $u_0\in \td M^{2,1} := \overline{C_c^\I}^{M^{2,1}}$, then $u$ has initial and eventual regularity and $t^{1/2}\|u(\cdot,t)\|_{L^\I}$ is bounded for sufficiently small and large $t$. In particular, this is true if $u_0\in L^{3,q}$ for $1\leq q<\I$.
\item If $u_0\in L^{p,q}$ where  $2<  p<3$ and $1\leq q \leq \I$ or if $u_0\in L^2$, then $u$ has eventual regularity and $t^{1/2}\|u(\cdot,t)\|_{L^\I}$ is bounded for sufficiently large $t$. If $u_0\in L^{p,q}$ where $3<p<\I$ and $1\leq q\leq \I$, then $u$ has initial regularity and $t^{1/2}\|u(\cdot,t)\|_{L^\I}$ is bounded for sufficiently small $t$.
\end{enumerate}    
\end{corollary}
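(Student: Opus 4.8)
All three parts reduce to Theorem \ref{thrm.littleo} once $N_R^0$ is bounded by the ambient norm and its scaling in $R$ is made explicit. The organizing principle I would use is that $\sup_{R>0}N_R^0 = \norm{u_0}_{M^{2,1}}^2$ is exactly critical, so any strictly sub- or super-critical space forces $N_R^0$ to carry a nonzero power of $R$ that vanishes at one end of the scale; smallness of $\sup_{R\geq R_0}N_R^0$ or of $\sup_{R\leq R_0}N_R^0$ is then obtained by sending $R_0\to\I$ or $R_0\to 0$ and invoking Part 1 or Part 2 respectively. For (1), I would start from \eqref{Mps} to write
\[
N_R^0 = \sup_{x_0}\frac1R\int_{B_R(x_0)}|u_0|^2\,dx = R^{r-1}\sup_{x_0}\frac1{R^r}\int_{B_R(x_0)}|u_0|^2\,dx \leq R^{r-1}\norm{u_0}_{M^{2,r}}^2.
\]
When $0\leq r<1$ the exponent is negative, so $\sup_{R\geq R_0}N_R^0\leq R_0^{r-1}\norm{u_0}_{M^{2,r}}^2\to 0$ as $R_0\to\I$; choosing $R_0$ large enough that this is below $\e_1$ and applying Part 1 gives eventual regularity. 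When $1<r\leq 3$ the exponent is positive and the symmetric choice $R_0\to 0$ with Part 2 gives initial regularity.

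For (2), I would introduce $\nu_R(f):=(N_R^0(f))^{1/2}$, which is subadditive in $f$ (Minkowski on each ball followed by a supremum over $x_0$) and satisfies $\nu_R(f)\leq\norm{f}_{M^{2,1}}$ for every $R$. For $g\in C_c^\I$ one checks directly that $\nu_R(g)^2\lesssim R^2\norm{g}_{L^\I}^2\to 0$ as $R\to 0$ and $\nu_R(g)^2\leq R^{-1}\norm{g}_{L^2}^2\to 0$ as $R\to\I$. Approximating $u_0\in\td M^{2,1}$ by such $g$ in $M^{2,1}$-norm and using subadditivity forces $\nu_R(u_0)\to 0$ at both ends of the scale, so both smallness conditions hold and Parts 1 and 2 apply. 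To obtain the stated special case, I would note that $C_c^\I$ is dense in $L^{3,q}$ for $1\leq q<\I$ and that the inclusion $L^{3,q}\hookrightarrow L^{3,\I}\hookrightarrow M^{2,1}$ is bounded (the second inclusion being Lemma \ref{lemma62}), whence $L^{3,q}\subset\overline{C_c^\I}^{M^{2,1}}=\td M^{2,1}$.

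For (3), I would estimate $\int_{B_R(x_0)}|u_0|^2$ by Hölder's inequality in Lorentz spaces applied to $|u_0|^2\in L^{p/2,q/2}$ and $\chi_{B_R}$:
\[
\int_{B_R(x_0)}|u_0|^2\,dx \lesssim \norm{u_0}_{L^{p,q}}^2\,\norm{\chi_{B_R}}_{L^{(p/2)',(q/2)'}} \lesssim \norm{u_0}_{L^{p,q}}^2\, R^{3(p-2)/p},
\]
using $\norm{\chi_E}_{L^{a,b}}\approx|E|^{1/a}$, which gives $N_R^0 \lesssim R^{2(p-3)/p}\norm{u_0}_{L^{p,q}}^2$. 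The exponent $2(p-3)/p$ is negative for $2<p<3$ (eventual regularity via $R_0\to\I$ and Part 1) and positive for $3<p<\I$ (initial regularity via $R_0\to 0$ and Part 2). The endpoint $u_0\in L^2$, where the Lorentz Hölder exponent degenerates, I would treat directly by $N_R^0\leq R^{-1}\norm{u_0}_{L^2}^2\to 0$ as $R\to\I$, again invoking Part 1.

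The analytic substance is already carried by Theorem \ref{thrm.littleo}, so what remains is essentially bookkeeping. The step I expect to require the most care is the Lorentz estimate in (3): getting the Hölder inequality $\norm{fg}_{L^1}\lesssim\norm{f}_{L^{a,b}}\norm{g}_{L^{a',b'}}$ and the indicator computation $\norm{\chi_{B_R}}_{L^{a,b}}\approx R^{3/a}$ correct, so that the sign of $2(p-3)/p$ separates the ranges $p<3$ and $p>3$ cleanly, and verifying the density of $C_c^\I$ in $L^{3,q}$ for $q<\I$ used in (2).
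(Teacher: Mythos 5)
Your proposal is correct and follows essentially the same route as the paper: bound $N_R^0$ by a power of $R$ times the ambient norm (exponent $R^{r-1}$ for $M^{2,r}$, $R^{2-6/p}$ for $L^{p,q}$), send $R_0$ to the appropriate end of the scale, and invoke the corresponding part of Theorem \ref{thrm.littleo}, with the $\td M^{2,1}$ case handled by approximation with $C_c^\I$ functions exactly as in the paper's Lemma \ref{lemma.morreyspacedecay} and the $L^{3,q}$ case by density plus the embedding $L^{3,q}\subset L^{3,\I}\subset M^{2,1}$. The only cosmetic difference is in part (3), where you derive the local bound $\int_{B_R}|u_0|^2 \lesssim \norm{u_0}_{L^{p,q}}^2 R^{3(1-2/p)}$ via Lorentz--H\"older with indicator functions, whereas the paper proves the same estimate by the layer-cake formula (its appendix Lemma \ref{weakLq-into-Morrey}); both are standard proofs of the identical embedding, though if you keep the H\"older route you should first pass to $L^{p,\I}$ (or take the indicator in $L^{(p/2)',1}$) to avoid the ill-defined conjugate index $(q/2)'$ when $q<2$.
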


The above corollary generalizes eventual regularity of Leray-Hopf weak solutions to a variety of new cases.
Note that for $2\leq q$,
\[
L^q \subset L^{q,s} \,( q\leq s \leq \I )\subset M^{2,r},
\]
where $r=3(1-2/q)$. 
Corollary \ref{cor.morrey}.1 thus applies to initial data  in $L^q$ where $2<q<3$, and the Lorentz scales $L^{q,s}$.   
The endpoint case $q=\I$ is beyond reach in part 2 because the test functions are not dense in $M^{2,1}$ (or even $L^{3,\I}$), a fact evidenced by $|x|^{-1}$. This is consistent with a remark in \cite{BT1} which proposes forward discretely self-similar solutions as counterexamples for eventual regularity.
Examples of solutions for data in $L^2$ are the Leray-Hopf weak solutions.  C.~Calderon constructed weak solutions for data in $L^q$ when $2<q<3$ in \cite{Calderon}.

Using Theorem \ref{thrm.littleo}, we also obtain a new small data regularity criteria for discretely self-similar solutions in the local energy class. 
Recall that solutions to \eqref{eq.NSE} satisfy a 
natural scaling: if $u$ satisfies \eqref{eq.NSE}, then for any $\lambda>0$
\begin{equation}
u^{\lambda}(x,t)=\lambda u(\lambda x,\lambda^2t),
\end{equation}
is also a solution with pressure 
\begin{equation}
p^{\lambda}(x,t)=\lambda^2 p(\lambda x,\lambda^2t),
\end{equation}
and initial data 
\begin{equation}
u_0^{\lambda}(x)=\lambda u_0(\lambda x).
\end{equation}
A solution is called self-similar (SS) if $u^\lambda(x,t)=u(x,t)$ for all $\lambda>0$ and is discretely self-similar with factor $\lambda$ (i.e.~$u$ is $\lambda$-DSS) if this scaling invariance holds for a given $\lambda>1$. Similarly, $u_0$ is self-similar (a.k.a.~$(-1)$-homogeneous) if $u_0(x)=\lambda u_0(\lambda x)$ for all $\lambda>0$ or $\lambda$-DSS if this holds for a given $\lambda>1$.  
These solutions can be either forward or backward if they are defined on $\R^3\times (0,\I)$ or $\R^3\times (-\I,0)$ respectively.  We focus on the forward case.  Forward self-similar and DSS solutions are known to exist for SS or DSS data in a variety of function spaces \cite{Barraza,BT1,BT2,BT3,BT5,CP,Chae-Wolf,GiMi,JiaSverak,Kato,KT-SSHS,LR2,Tsai-DSSI}, but, for large data, their fine properties have not been thoroughly investigated (for small data, see \cite{Brandolese}).   
Gruji\'c proved the only existing result in this direction in \cite{Grujic}, showing that any forward self-similar solution in the local energy class is smooth.  This is, in general, not known for forward \emph{discretely} self-similar solutions. Indeed, Gruji\'c's argument breaks down for DSS solutions because their singular sets might possess isolated singularities in space-time, which is not ruled out in \cite{CKN}.  A self-similar solution, on the other hand, would have at least a 1 dimensional (in space-time) singular set which violates conditions in \cite{CKN}.  Smoothness has recently been established in \cite{KMT} when $u_0\in L^{3,\I}$ is $\la$-DSS and $\la$ is close to $1$.
Our next result establishes smoothness for discretely self-similar solutions evolving from small initial data in $L^2_\uloc$. Note that, solutions are known to exist for such data \cite{BT1,BT2,BT5}.  

\begin{corollary}[Regularity of small-data DSS solutions]\label{cor.DSS1} 
Assume $u_0\in L^2_{\uloc}$  is divergence free and $\la$-DSS for some $\la> 1$, and that $u\in \cN(u_0)$.  If  $\|u_0\|_{L^2_{\uloc}}<\e_1/\sqrt \la$, then $u \in C^\I_{\loc}(\R^3\times \R_+)$.
\end{corollary}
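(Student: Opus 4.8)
The plan is to deduce the corollary entirely from part 3 of Theorem~\ref{thrm.littleo}. If the $\la$-DSS hypothesis together with the smallness $\|u_0\|_{L^2_\uloc}<\e_1/\sqrt\la$ can be shown to force $\sup_{R>0}N^0_R<\e_1$, i.e.\ condition \eqref{cond.everywhere}, then that theorem guarantees that $u$ has no singular times and that $t^{1/2}\|u(\cdot,t)\|_{L^\I}$ is finite and locally bounded on $(0,\I)$. Since $u\in\cN(u_0)$ is suitable in the sense of Caffarelli--Kohn--Nirenberg (item (5) of Definition~\ref{def:localLeray}), local boundedness near every $(x,t)$ with $t>0$ upgrades to interior smoothness by the standard $\e$-regularity and parabolic bootstrap for suitable weak solutions; hence $u\in C^\I_\loc(\R^3\times\R_+)$. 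Thus the whole corollary reduces to the single estimate $\|u_0\|_{M^{2,1}}^2=\sup_{R>0}N^0_R<\e_1$.

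The first step is to use the DSS structure to collapse the range of scales. Writing $u_0(x)=\la u_0(\la x)$ and changing variables $x=\la y$ in the definition of $N^0_R$, and using $|u_0(\la y)|^2=\la^{-2}|u_0(y)|^2$ together with the fact that $x_0\mapsto x_0/\la$ is a bijection of $\R^3$, one obtains the exact invariance $N^0_{\la R}=N^0_R$ for all $R>0$. Hence $\log R\mapsto N^0_R$ is periodic with period $\log\la$, so that
\[
\sup_{R>0}N^0_R=\sup_{R\ge R_0}N^0_R=\sup_{R\le R_0}N^0_R=\sup_{1\le R<\la}N^0_R
\]
for every $R_0>0$. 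In particular the three smallness conditions of Theorem~\ref{thrm.littleo} all coincide for DSS data and equal $\|u_0\|_{M^{2,1}}^2$, so it suffices to bound the supremum over the single band $1\le R<\la$. (Alternatively one could invoke only part 2: initial regularity empties a time band $(0,t_2)$ of singular times, and since the solution's DSS scaling $u(x,t)=\la u(\la x,\la^2 t)$ makes the set of singular times invariant under $t\mapsto\la^{-2}t$, a single singular time would produce arbitrarily small singular times inside $(0,t_2)$, a contradiction.)

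The second step is the quantitative bound $\|u_0\|_{M^{2,1}}^2\le C\la\,\|u_0\|_{L^2_\uloc}^2$ with an absolute constant $C$. The mechanism is the scaling identity $\int_{B_{\la\rho}(\la x)}|u_0|^2\,dx=\la\int_{B_\rho(x)}|u_0|^2\,dx$ (the same change of variables as above) combined with the accumulation of the inner DSS annuli at the origin. The annuli $A_{-k}=\{\la^{-k}\le|x|<\la^{1-k}\}$, $k\ge1$, all lie inside $B_1(0)$ and satisfy $\int_{A_{-k}}|u_0|^2=\la^{-k}\int_{A_0}|u_0|^2$ with $A_0=\{1\le|x|<\la\}$; summing the geometric series gives $\int_{A_0}|u_0|^2\le(\la-1)\int_{B_1(0)}|u_0|^2\le(\la-1)\|u_0\|_{L^2_\uloc}^2$, and summing over the outer annuli yields $\int_{B_\rho(0)}|u_0|^2\le C\la\rho\,\|u_0\|_{L^2_\uloc}^2$ for all $\rho>0$. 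For a band scale $R\in[1,\la)$ and a general center $x_0$ one reduces to this origin-centered bound by using the scaling identity to move the center into a bounded region while exploiting the DSS decay of $u_0$ at large $|x|$, so that the covering loss stays of order $\la$ rather than $\la^3$; dividing by $R\ge1$ then gives $N^0_R\le C\la\|u_0\|_{L^2_\uloc}^2$ throughout the band. Feeding in the hypothesis $\|u_0\|_{L^2_\uloc}^2<\e_1^2/\la$ produces $\|u_0\|_{M^{2,1}}^2<C\e_1^2$, which is $<\e_1$ once $\e_1$ is fixed small enough that $C\e_1<1$.

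The main obstacle is precisely the uniform-in-center Morrey estimate. The origin-centered case is clean because DSS makes the inner annuli pile up inside one unit ball, but for balls $B_R(x_0)$ with $|x_0|$ large one cannot simply enlarge to an origin-centered ball (that both inflates the radius and sweeps in the high-mass region near $0$), nor cover by unit balls naively (that costs a factor $\la^3$ and loses the threshold). The correct route is to rescale the center toward the origin by the DSS symmetry and to quantify the decay of $u_0$ away from $0$, keeping the constant linear in $\la$; this linear dependence is exactly what the threshold $\e_1/\sqrt\la$ encodes. A secondary, routine point is the upgrade from the $L^\I$ bound furnished by Theorem~\ref{thrm.littleo} to full $C^\I_\loc$ regularity, which is standard for suitable local energy solutions.
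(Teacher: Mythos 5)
Your overall strategy---reducing the corollary to part 3 of Theorem \ref{thrm.littleo} by showing $\sup_{R>0}N^0_R=\|u_0\|_{M^{2,1}}^2<\e_1$---is exactly the paper's, and your Step 1 (the scale-periodicity $N^0_{\la R}=N^0_R$) is correct. The gap is in Step 2, which you yourself call ``the main obstacle'' but never close: you prove the Morrey bound only for balls centered at the origin (via the annuli $A_j$), and for a general center you offer only the sentence that one should ``rescale the center toward the origin by the DSS symmetry and quantify the decay of $u_0$ away from $0$, keeping the constant linear in $\la$.'' That is a restatement of the goal, not an argument. Worse, the decay you propose to exploit does not exist in general: the paper's example \eqref{eq6.6} is $\la$-DSS and in $M^{2,1}$, yet the mass $\int_{B_1(x_0)}|f|^2\,dx$ does \emph{not} tend to $0$ as $|x_0|\to\infty$---this is precisely why such data fail to be in $E^2$ (Lemma \ref{lemma62}, \eqref{eq4})---so any route through ``DSS decay at large $|x|$'' is unavailable. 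A secondary error: your parenthetical alternative (initial regularity plus invariance of the set of singular times under $t\mapsto\la^{-2}t$) is not usable here, because the corollary assumes only that $u_0$ is DSS, not that the solution $u\in\cN(u_0)$ is DSS; the paper explicitly emphasizes this.

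The estimate you are missing actually follows in two lines from the scaling identity you already proved, applied with the \emph{same} center; this is the paper's Lemma \ref{lemma.dss.equiv}. Given $x_0\in\R^3$ and $R>0$, choose $k\in\Z$ with $\la^k\le R<\la^{k+1}$; then
\[
\frac 1R\int_{B_R(x_0)}|u_0|^2\,dx
\;\le\; \frac 1{\la^{k}}\int_{B_{\la^{k+1}}(x_0)}|u_0|^2\,dx
\;=\; \la\int_{B_1(x_0/\la^{k+1})}|u_0|^2\,dy
\;\le\; \la\,\|u_0\|_{L^2_\uloc}^2,
\]
where the middle equality is your identity $\int_{B_{\la\rho}(\la z)}|u_0|^2\,dx=\la\int_{B_\rho(z)}|u_0|^2\,dy$ iterated $k+1$ times. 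No covering, no reduction to origin-centered balls, and no spatial decay are needed: the rescaled center $x_0/\la^{k+1}$ is arbitrary, but the $L^2_\uloc$ norm is already a supremum over all unit-ball centers. This gives \eqref{DSSL2uloc}, i.e.\ $\|u_0\|_{M^{2,1}}\le\sqrt\la\,\|u_0\|_{L^2_\uloc}$ with constant exactly $1$, so the hypothesis $\|u_0\|_{L^2_\uloc}<\e_1/\sqrt\la$ yields $\sup_{R>0}N^0_R<\e_1^2<\e_1$ (since $\e_1<1$), with no extra constant $C$---which also removes your need to ``fix $\e_1$ small enough that $C\e_1<1$,'' a delicate move given that $\e_1$ is prescribed by Theorem \ref{thrm.littleo} rather than chosen in the corollary. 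Your final step (upgrading the $L^\I$ bound to $C^\I_\loc$ via $\e$-regularity and suitability) is fine and matches the paper.
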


Note that we do not require $u$ to be DSS in the statement of the theorem. When $u_0$ is DSS, its $M^{2,1}$ and $L^2_\uloc$ norms are equivalent, see \eqref{DSSL2uloc}. So smallness in one implies smallness in the other.  We will later establish uniqueness in $\mathcal N(u_0)$ for the same data in Corollary \ref{cor.DSS2}. 
However, DSS $u_0$ in $L^2_{\uloc}$ may not be in $E^2$; see Lemma \ref{lemma62}. The only currently available existence results for such $u_0$ in \cite{Chae-Wolf,BT5} give us DSS solutions but not the  local pressure decomposition.   However, when the initial data belong to $L^2_\uloc$, it is not difficult to prove the solutions constructed in \cite{BT5} are local energy solutions, implying the unique $u \in \cN(u_0)$ is DSS. We will revisit this in Section \ref{sec.existence.DSS}.

\bigskip 
We next turn our attention to the problem of global existence for some possibly non-decaying data in $L^2_\uloc$.   

\begin{theorem}[Global existence]\label{thrm.existence}
Assume $u_0\in L^2_{\uloc}$, is divergence free, and 
\EQ{\label{condition.forexistence}
\lim_{R\to \I} \sup_{x_0\in \R^3} \frac 1 {R^2} \int_{B_R(x_0)} |u_0(x)|^2\,dx = 0.
}
Then, there exists a global in time local energy solution $u$ to \eqref{eq.NSE} with initial data $u_0$.
\end{theorem}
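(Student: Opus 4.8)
The plan is to construct the solution by the local energy method of Lemari\'e-Rieusset \cite{LR} and Kikuchi-Seregin \cite{KiSe}: regularize the data, produce approximate solutions carrying uniform a priori bounds, and pass to the limit by compactness. The only genuinely new ingredient is to upgrade the standard \emph{local-in-time} a priori estimate to a \emph{global-in-time} one, and this is exactly where hypothesis \eqref{condition.forexistence} enters, through a scaling argument. Throughout I write $N_R^0(v) = \sup_{x_0}\frac1R\int_{B_R(x_0)}|v|^2\,dx$, and note that \eqref{condition.forexistence} says precisely $N_R^0(u_0)/R \to 0$ as $R\to\I$, while $u_0\in L^2_\uloc$ controls the small scales.

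First I would set up the approximation. Mollify $u_0$ and multiply by a smooth cutoff supported in $B_{M_k}$ with $M_k\to\I$, restoring the divergence-free condition by a standard divergence-free correction; this yields divergence-free $u_0^{(k)}\in E^2$ (indeed compactly supported) with $u_0^{(k)}\to u_0$ in $L^2_{\loc}$ and, crucially, satisfying \eqref{condition.forexistence} \emph{uniformly in} $k$, i.e. $\lim_{R\to\I}\sup_k N_R^0(u_0^{(k)})/R = 0$. Since each $u_0^{(k)}\in E^2$, a global local energy solution $u^{(k)}$ exists by the known theory for $E^2$ data.

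The core step is the uniform global bound: for every $T<\I$,
\[
\sup_k\Big[\sup_{0\le t\le T}\|u^{(k)}(t)\|_{L^2_\uloc}^2 + \sup_{x_0}\int_0^T\!\!\int_{B_1(x_0)}|\nabla u^{(k)}|^2\,dx\,dt\Big] \le C(T,u_0) <\I.
\]
I would obtain this from the standard local-energy a priori estimate, which controls $\|u(t)\|_{L^2_\uloc}^2$ and the local dissipation on $[0,\,c\min(1,\|u_0\|_{L^2_\uloc}^{-4})]$, combined with the natural scaling $u^\lambda(x,t)=\lambda u(\lambda x,\lambda^2 t)$. A direct computation gives $\|u_0^\lambda\|_{L^2_\uloc}^2 = N_\lambda^0(u_0)$, so rescaling with $\lambda=R$ (in the regime $N_R^0\ge 1$) shows the unscaled solution is controlled on $[0,\,cR^2/(N_R^0)^2] = [0,\,c\,(N_R^0/R)^{-2}]$. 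By \eqref{condition.forexistence} the right endpoint tends to $\I$ as $R\to\I$; thus, given any $T$, choosing $R$ large (uniformly in $k$, by the uniform decay) with $c(N_R^0/R)^{-2}>T$ yields the displayed bound on $[0,T]$ with a constant independent of $k$. This is the precise mechanism by which large-scale smallness buys global-in-time existence, with bounded $N_R^0$ (the $M^{2,1}$ case) and spatial decay (the $E^2$ case) as special cases; the sharpness of the $R^2$-normalization in \eqref{condition.forexistence} reflects the scaling-critical form of the local-in-time estimate. With uniform bounds in hand I would then extract, via Aubin-Lions and Arzel\`a-Ascoli compactness (strong $L^2_{\loc}$ convergence of $u^{(k)}$, weak convergence of gradients, and convergence of the pressures from \eqref{pressure.dec}), a limit $u$ and verify each item of Definition \ref{def:localLeray}: the weak form and the local energy inequality \eqref{CKN-LEI} pass to the limit by lower semicontinuity and strong convergence of the nonlinear term; the initial condition follows from the uniform energy bound and weak continuity; and the decomposition \eqref{pressure.dec} at every scale $R$ is recovered from the uniform local bounds together with Calder\'on-Zygmund estimates for the near-field term and direct bounds on the far-field kernel difference.

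I expect the main obstacle to be the global a priori estimate of the core step, specifically running the scaling argument cleanly while keeping all constants uniform in the approximation index $k$ and confirming that the local-in-time estimate has exactly the critical power needed for \eqref{condition.forexistence} to close the induction on $T$. A secondary difficulty is establishing the \emph{full} pressure formula \eqref{pressure.dec} at all scales (not merely $R=1$ as in \cite{KiSe}) for the limit, since the data need not decay; this requires controlling the nonlocal far-field pressure contributions uniformly as the large-scale cutoff $M_k\to\I$.
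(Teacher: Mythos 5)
Your overall architecture differs from the paper's: you regularize the \emph{data} and invoke the known $E^2$ theory as a black box, whereas the paper keeps the original $u_0$ untouched and regularizes the \emph{equation} (system \eqref{eq.NSEreg}, with mollified advection and cutoff nonlinearity), proving global existence for the regularized system by iteration (Lemmas \ref{lemma.KwTs}--\ref{lemma.regularizedNSEexistence}) before passing to the limit. Your core mechanism is exactly the paper's: the scaling identity $\|\la u_0(\la\cdot)\|_{L^2_\uloc}^2=N^0_\la(u_0)$ upgrades the scale-one a priori estimate to control on $[0,c_0(N^0_R/R)^{-2}]$, which is Lemma \ref{lemma.JiaSve} restated, and whose right endpoint tends to infinity under \eqref{condition.forexistence} (the paper's $T_n\to\I$). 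Your limit procedure, including recovery of \eqref{pressure.dec} at all scales, is also in the spirit of Section \ref{sec.existence}. (Invoking \cite{KiSe} for the $E^2$ approximations is legitimate, since for $E^2$ data the spatial decay of the solution yields \eqref{pressure.dec} at all scales, cf.\ \cite{MaMiPr,KMT}, so those solutions do lie in $\cN(u_0^{(k)})$ and Lemma \ref{lemma.JiaSve} applies to them.) But there is a genuine gap at your very first step, and it is the step on which everything else depends.

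The gap is the assertion that truncating (a mollification of) $u_0$ to $B_{M_k}$ and applying ``a standard divergence-free correction'' produces divergence-free $u_0^{(k)}\in E^2$ satisfying \eqref{condition.forexistence} \emph{uniformly in} $k$. No standard correction does this. The Leray projection is not bounded on $L^2_\uloc$. A Bogovskii correction $w_k$ on the annulus $A_k=B_{2M_k}\setminus B_{M_k}$, solving $\div w_k=-\nb\chi_k\cdot u_0$, comes only with the global bound $\|w_k\|_{L^2}^2\lec \|u_0\|_{L^2(A_k)}^2\lec M_kN^0_{M_k}$, a quantity that may tend to infinity as $k\to\I$; since Bogovskii provides no uniformly local control, $w_k$ may concentrate, which can destroy even the uniform $L^2_\uloc$ bound on $u_0^{(k)}$, let alone uniform \eqref{condition.forexistence} at scales $1\ll R\ll M_k$. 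Alternatively, writing $u_0=\curl\psi$ with a renormalized stream potential and truncating $\psi$ produces an error $M_k^{-1}\nb\chi_k\times\psi$ that is pointwise $O(\e_k)$ with $\e_k^2\sim N^0_{M_k}/M_k\to 0$, but it is supported on a set of volume $\sim M_k^3$, so its contribution to $N^0_R/R$ at the scale $R\sim M_k$ is of order $M_k\e_k^2\sim N^0_{M_k}$ --- and \eqref{condition.forexistence} controls $N^0_R/R$, \emph{not} $N^0_R$, which may be unbounded (e.g.\ $N^0_R\sim\sqrt R$ is allowed). Making such errors small in the critical Morrey sense would essentially require $N^0_R(u_0)\to 0$, a strictly stronger hypothesis. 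Without uniformity, your choice of $R=R(T)$ valid for all $k$ collapses, and you only obtain bounds on time intervals that may shrink with $k$. This obstruction is precisely why the paper (following \cite{LR,KwTs}) regularizes the equation rather than the data, so that \eqref{condition.forexistence} is available verbatim at every stage; to rescue your route you would need to prove a genuinely new approximation lemma (divergence-free $E^2$ approximations converging in $L^2_\loc$ with uniform control of $N^0_R/R$), which appears to demand a delicate multiscale construction rather than a routine correction.
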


In particular, any divergence-free $u_0\in M^{2,1}(\R^3)$ satisfies the conditions in Theorem \ref{thrm.existence}, while $u_0$ may not be in $E^2$; see Lemma \ref{lemma62}.

For large initial data, the existence of global in time solutions in critical spaces related to $M^{2,1}$, namely $L^3,L^{3,\I},$ and $\dot B^{-1+3/p}_{p,\I}$ where $p<\I$ has recently been studied in \cite{SeSv,BaSeSv,AlBa}.  It is unknown if global in time weak solutions exist for data in the Koch-Tataru space $BMO^{-1}$ (see \cite{KochTataru,LR}).
Note that, unless the solution has some special structure (see, e.g., \cite{Tsai-DSSI,BT1,Chae-Wolf,BT5,LR2}), most global-in-time results assume something is decaying at spatial infinity.  This could be, for instance, that $u_0\in E^2$ \cite{KiSe}, that $u_0$ has decaying oscillation \cite{KwTs}, that $u_0$ is in a stronger space than $E^2$ like $L^3$ or $L^{3,\I}$ \cite{SeSv,BaSeSv} or that $u_0$ is in a non-endpoint Besov space with which is scaling invariant for the Navier-Stokes problem \cite{AlBa} (these spaces still have decay since each Littlewood-Paley block is in $L^p$ and $p<\I$).
Decay at spatial infinity allows a local in time solution to be split at a positive time into a
part which is small in a dimensionless space and a large finite energy part.  The solution is then extended in time by gluing together a local strong solution (the time scale of which is uniform due to smallness), and a weak solution to a perturbed problem.  The only example where the splitting argument is not used is the case of $M^{2,1}$ in \cite{LR-Morrey}, which is a special case of our more general result, Theorem \ref{thrm.existence}.

To prove Theorem \ref{thrm.existence}, we use ideas from \cite{JiaSverak-minimal} to extend a priori bounds starting at the initial data to arbitrarily large times directly by passing to larger and larger scales.   This is different than the usual approach since smallness at spatial infinity does not play a role.  Note that in some regard, we are still assuming some weak form of decay at spatial infinity since a constant function does not satisfy \eqref{condition.forexistence}. Let us mention that Lemari\'e-Riuesset's proof for the special case of data in $M^{2,1}$  \cite{LR-Morrey} is similar to ours, but we were not aware of it until after writing this paper.

\bigskip 

The last results in this paper concern the uniqueness of solutions in $\mathcal N(u_0)$. We include a global and local result when $u_0$ is small in some sense.

\begin{theorem}[Uniqueness for small data in $M^{2,1}$]\label{thrm.uniquenessa}
Assume $u_0\in L^{2}_\uloc$ and is incompressible.   Let $u$ and $v$ be elements of $\cN(u_0)$.  There exists a universal constant $\e_2$ such that,  if $\|u_0\|_{M^{2,1}}\leq \e_2$, then $u=v$ as distributions on $\R^3\times (0,\I)$.
\end{theorem}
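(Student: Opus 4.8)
The plan is to use the a priori regularity furnished by Theorem~\ref{thrm.littleo} to reduce the problem to a clean contraction estimate for two \emph{smooth} solutions in a scaling-critical $L^2_\uloc$ norm. First I would choose $\e_2\le\sqrt{\e_1}$, so that $\|u_0\|_{M^{2,1}}\le\e_2$ forces $\sup_{R>0}N^0_R=\|u_0\|_{M^{2,1}}^2<\e_1$, which is exactly condition \eqref{cond.everywhere}. Part~3 of Theorem~\ref{thrm.littleo} then applies to both $u$ and $v$: each is regular on $\R^3\times(0,\I)$ and satisfies the global critical bound
\[
\|u(t)\|_{L^\I}+\|v(t)\|_{L^\I}\le C\e_2\,t^{-1/2},\qquad t>0.
\]
Combined with the uniform local energy bound built into Definition~\ref{def:localLeray} (and the elementary inequality $\|f\|_{L^2_\uloc}\lesssim\|f\|_{L^\I}$ for the large-time range), this yields $a:=\sup_{s>0}\|w(s)\|_{L^2_\uloc}<\I$, where $w:=u-v$.

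Next I would upgrade each solution to a \emph{mild} solution. Using the local pressure decomposition \eqref{pressure.dec} together with the regularity and decay just obtained, one shows that $u$ and $v$ each satisfy the Oseen--Duhamel integral equation
\[
u(t)=e^{t\De}u_0+B(u,u)(t),\qquad B(f,g)(t):=-\int_0^t e^{(t-s)\De}\,\mathbb{P}\,\div\,(f\otimes g)(s)\ind s,
\]
where $e^{(t-s)\De}\mathbb{P}\div$ is interpreted as convolution against the Oseen kernel, so that the nonlocal pressure is encoded directly in the kernel and one never needs to define $\mathbb{P}$ on non-decaying fields. Subtracting the two identities cancels the common linear evolution $e^{t\De}u_0$, and bilinearity of $B$ gives the difference equation
\[
w=B(u,u)-B(v,v)=B(u,w)+B(w,v).
\]

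It then remains to run a fixed-point-type estimate for $w$ directly in $L^2_\uloc$. The Oseen kernel obeys $\|e^{\tau\De}\mathbb{P}\div\,F\|_{L^2_\uloc}\lesssim\tau^{-1/2}\|F\|_{L^2_\uloc}$ for every $\tau>0$, while $\|f\otimes g\|_{L^2_\uloc}\le\|f\|_{L^\I}\|g\|_{L^2_\uloc}$. Feeding in the critical decay, for every $t>0$,
\[
\|w(t)\|_{L^2_\uloc}\lesssim\int_0^t (t-s)^{-1/2}\bke{\|u(s)\|_{L^\I}+\|v(s)\|_{L^\I}}\|w(s)\|_{L^2_\uloc}\ind s\le C\e_2\,a\int_0^t (t-s)^{-1/2}s^{-1/2}\ind s.
\]
The decisive feature is that the final integral is the scale-invariant Beta integral $\int_0^t(t-s)^{-1/2}s^{-1/2}\ind s=\pi$, \emph{independent of} $t$. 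Taking the supremum over $t>0$ gives $a\le C\pi\e_2\,a$ with $a<\I$, so that choosing $\e_2<1/(C\pi)$ forces $a=0$, i.e.\ $w\equiv0$ and $u=v$ on $\R^3\times(0,\I)$. Thus the universal constant is $\e_2=\min\bke{\sqrt{\e_1},\,1/(C\pi)}$.

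I expect the main obstacle to be the second step: rigorously extracting the Oseen--Duhamel equation for a single local energy solution in the non-decaying $L^2_\uloc$ setting. Since the Leray projection is not bounded on non-decaying fields, the identity must be read off from the explicit pressure formula \eqref{pressure.dec}; in particular the far-field tail there must be matched against the nonlocal part of the Oseen kernel, and one must verify the mapping bound $\|e^{\tau\De}\mathbb{P}\div\,F\|_{L^2_\uloc}\lesssim\tau^{-1/2}\|F\|_{L^2_\uloc}$ uniformly in $\tau$, using that the kernel of $e^{\tau\De}\mathbb{P}\div$ decays like $(|x|+\sqrt{\tau})^{-4}$. These verifications are technical but standard in the local energy framework; crucially, once they are in place the contraction above closes in a single stroke on all of $(0,\I)$, so that, unlike the endpoint $L^\I$-Kato or weighted-energy approaches, no separate short-time analysis and no quantitative rate of convergence at $t=0^+$ are needed.
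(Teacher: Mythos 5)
Your architecture coincides with the paper's: smallness in $M^{2,1}$ gives \eqref{cond.everywhere}, Theorem \ref{thrm.littleo}.3 gives $\|u(t)\|_{L^\infty}+\|v(t)\|_{L^\infty}\lesssim \e_2\, t^{-1/2}$, each solution is upgraded to a mild solution, and a Duhamel contraction with a scale-invariant Beta integral finishes. Your contraction step itself is sound: the mapping bound $\|e^{\tau\De}\mathbb P\nb\cdot F\|_{L^2_\uloc}\lesssim \tau^{-1/2}\|F\|_{L^2_\uloc}$ is exactly \eqref{MTb} with $p=q=2$, finiteness of $a$ follows from item 2 of Definition \ref{def:localLeray} together with the $L^\infty$ decay, and the estimate closes uniformly in $t$; this is an unweighted $L^2_\uloc$ variant of the paper's contraction (the paper uses $s^{1/4}\|\cdot\|_{M^{4,1}}$ here and $\tau^{1/4}\|\cdot\|_{L^4_{\uloc,R}}$ in Theorem \ref{thrm.uniquenessb}), and it works equally well.

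The genuine gap is the step you set aside as ``technical but standard'': proving $u=\td u$, where $\td u$ is the Duhamel integral \eqref{tdu.def}. In the non-decaying setting the only known route (the paper's, following \cite{KMT}) is to mollify $U=u-\td u$, show $\curl U_\e$ solves the heat equation with zero data and hence vanishes, so that $U_\e(\cdot,t)$ is curl-free and divergence-free, hence harmonic; one must then show this harmonic function is zero. Here your bounds are strictly insufficient: from $\sup_t\|U(t)\|_{L^2_\uloc}<\I$ you only get $\int_{B_r}|U_\e(t)|^2\lesssim r^3$, exactly cubic growth, so the mean value property gives $|U_\e(x,t)|\lesssim 1$ and Liouville yields $U_\e(\cdot,t)=c(t)$, a possibly nonzero constant vector. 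Nonzero constants lie in $L^2_\uloc$, are divergence-free, and are compatible with the pointwise bound $|c(t)|\lesssim \e_2 t^{-1/2}$ at each fixed $t$, so nothing in your framework excludes them; this freedom is precisely the gauge $c_{x_0,R}(t)$ in the pressure formula \eqref{pressure.dec} resurfacing at the level of the velocity. If one only knows $u=e^{t\De}u_0+B(u,u)+c_u(t)$ and $v=e^{t\De}u_0+B(v,v)+c_v(t)$, the uncancelled difference $c_u-c_v$ enters the equation for $w$ and your contraction no longer closes. What kills the constant in the paper is the uniform Morrey bound $\sup_{0<t<\I}\|u(t)\|_{M^{2,1}}\le C(\e)$, \eqref{0508-d}, obtained by applying the multi-scale a priori estimate \eqref{ineq.apriorilocal} at all scales $r\ge \sqrt{t/c_0}$ (estimate \eqref{ineq.Morrey1}) and interpolating with the $L^\infty$ decay below that scale (estimate \eqref{ineq.Morrey2}), together with the analogous $M^{2,1}$ bound for $\td u$ in \eqref{ineq.1.8.19.a}--\eqref{ineq.1.8.19.b}; then $\frac 1r\int_{B_r}|U_\e(t)|^2$ stays bounded as $r\to\I$, the mean value bound is $O(1/r)\to 0$, and $U_\e\equiv 0$. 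Your hypotheses do allow you to prove \eqref{0508-d}, but it must actually be proved --- it is the real content of the mild-solution step, and no kernel-matching computation at unit scale substitutes for this large-scale control.
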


\begin{theorem}[Uniqueness for data that is small at high frequencies]\label{thrm.uniquenessb}
Assume $u_0\in L^{2}_\uloc$ and is incompressible.   Let $u$ and $v$ be elements of $\cN(u_0)$.  There exists a universal constant $\e_2$ such that, 
%if \crm{$\lim_{R\to \infty} R^{-3}\int_{B_R(0)} |u_0|^2 =0$}
  if  either $u_0$ satisfies \eqref{condition.forexistence} or $u_0\in E^2$,
and $\lim_{R\to 0} N^0_R<\e_2$, then there exists $T>0$ so that $u=v$ as distributions on $\R^3\times (0,T)$.
Furthermore, $T\sim R^2$  where $R>0$ satisfies
\[
\sup_{0<r\leq R} N^0_r \leq \e_2.
\]
\end{theorem}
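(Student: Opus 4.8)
The plan is to reduce the theorem to a comparison of two \emph{regular} solutions and then close a localized energy estimate for their difference, the only genuinely borderline term being tamed by smallness. First I would fix $\e_2\le\e_1$ and use $\lim_{R\to0}N^0_R<\e_2$ to select $R>0$ with $\sup_{0<r\le R}N^0_r\le\e_2$. Applying part~2 of Theorem~\ref{thrm.littleo} to each of $u,v\in\cN(u_0)$ (with $R_0=R$) yields initial regularity on $(0,T]$, $T:=c_0R^2$, together with the Serrin-type bounds
\[
t^{1/2}\|u(\cdot,t)\|_{L^\infty}+t^{1/2}\|v(\cdot,t)\|_{L^\infty}\lesssim\e_2^{1/2},\qquad 0<t\le T,
\]
which in particular gives $T\sim R^2$ as asserted. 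On $(0,T)$ both solutions are smooth, so the problem becomes a strong--strong comparison.

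I then set $w=u-v$, $\pi=p_u-p_v$, and use $u\cdot\nabla u-v\cdot\nabla v=u\cdot\nabla w+w\cdot\nabla v$ to obtain
\[
\partial_t w-\Delta w+u\cdot\nabla w+w\cdot\nabla v+\nabla\pi=0,\qquad \nabla\cdot w=0,\qquad w(\cdot,0)=0.
\]
Since $u,v$ have infinite energy I cannot test against $w$ globally; instead I fix a center $x_0$ and a spatial weight $\phi=\phi_{x_0}$ concentrated at scale $R$ and test against $w\phi^2$, obtaining a differential inequality of the schematic form
\[
\frac{d}{dt}\int|w|^2\phi^2+2\int|\nabla w|^2\phi^2\le(\text{transport})+(\text{stretching})+(\text{pressure}).
\]

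The three groups would be estimated as follows. The transport term, after integrating by parts with $\nabla\cdot u=0$, equals $\tfrac12\int|w|^2\,u\cdot\nabla\phi^2$ and is bounded by $\|u(t)\|_{L^\infty}R^{-1}\int|w|^2\phi^2\lesssim\e_2^{1/2}t^{-1/2}R^{-1}\int|w|^2\phi^2$, whose coefficient is integrable on $(0,T)$ with $T\sim R^2$. The stretching term is the essential one: transferring the derivative onto $w$ bounds it by $\|v(t)\|_{L^\infty}\|\nabla w\,\phi\|_2\|w\phi\|_2\lesssim\e_2^{1/2}t^{-1/2}\|\nabla w\,\phi\|_2\|w\phi\|_2$, and a Young split that absorbs part of $\int|\nabla w|^2\phi^2$ leaves the borderline coefficient $C\e_2\,t^{-1}$ in front of $\int|w|^2\phi^2$. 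For the pressure I would insert the representation \eqref{pressure.dec} for $p_u$ and $p_v$: the additive constant drops against $\int w\cdot\nabla\phi^2$ by incompressibility; the local Calder\'on--Zygmund part is quadratic in $(u,w)$ and $(w,v)$ on $B_{4R}(x_0)$ and is controlled via the $L^\infty$ bounds and the local energy; and the non-local tail $\int(K(x-y)-K(x_0-y))(u\otimes u-v\otimes v)(1-\chi_{4R})$ couples the local energy at $x_0$ to the far-field of $w$. This last coupling is exactly where the hypothesis enters: either \eqref{condition.forexistence} or $u_0\in E^2$ forces the background $u\otimes u-v\otimes v$ to decay enough that a weighted summation of the local estimates over centers $x_0$ (in the manner of \cite{JiaSverak-minimal,Jia-uniqueness}) converges, rather than being spoiled by the non-decay allowed for generic $L^2_\uloc$ data.

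Assembling these bounds gives $\tfrac{d}{dt}\mathcal E\le C\e_2\,t^{-1}\mathcal E+g(t)\mathcal E$ for the weighted energy $\mathcal E(t)=\int|w|^2\phi^2$ with $g\in L^1(0,T)$, while property~4 of Definition~\ref{def:localLeray} gives $\mathcal E(t)\to0$ as $t\to0^+$; since $x_0$ is arbitrary, this should force $w\equiv0$ on $(0,T)$. I expect the main obstacle to be the borderline $t^{-1}$ coefficient produced by the critically scaling stretching term: as $\int_0^t s^{-1}\,ds$ diverges, a naive Grönwall argument fails, and one must use both that $C\e_2$ is small (so the amplification $(t/t_0)^{C\e_2}$ is mild) and a quantitative rate for $\mathcal E(t_0)\to0$ as $t_0\to0$. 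That rate is most cleanly supplied by the mild/Duhamel representation of the two regular solutions near $t=0$ --- both built from the same heat extension $e^{t\Delta}u_0$ --- which, via the Kato bilinear estimate and smallness, identifies $w=0$ directly; the decay hypothesis is what makes this representation (and the non-local pressure) rigorous.
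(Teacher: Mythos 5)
There is a genuine gap, in two places. First, your primary route---the localized energy comparison---founders exactly where you say it does: the stretching term produces the coefficient $C\e_2\,t^{-1}$, and Gr\"onwall then requires a quantitative rate $\mathcal E(t_0)=o(t_0^{C\e_2})$ as $t_0\to0$. Property~4 of Definition~\ref{def:localLeray} gives only qualitative convergence $\mathcal E(t_0)\to0$, so the limit $\mathcal E(t)\le\lim_{t_0\to 0}\mathcal E(t_0)(t/t_0)^{C\e_2}$ is of the indeterminate form $0\cdot\infty$ and the argument does not close. Likewise, the claim that the non-local pressure tail can be handled by ``a weighted summation of the local estimates over centers $x_0$'' is asserted, not proved; for local energy solutions this coupling is precisely the delicate point, and no actual estimate is given. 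Second, your fallback---``the mild/Duhamel representation \dots identifies $w=0$ directly''---is in fact the paper's entire proof, compressed into one sentence, and it omits the two substantive steps. The crucial one is showing that an \emph{arbitrary} element of $\cN(u_0)$ (not a constructed solution) coincides with its own Duhamel integral on $(0,T)$: the paper defines $\td u$ by the integral formula, shows $U=u-\td u$ satisfies a uniform-in-time growth bound $\sup_{x_0}\int_{B_r(x_0)}|U|^2\le Cf(r)$ with $f(r)=o(r^3)$ \emph{for all} $r$ up to a fixed $T$---this is exactly where \eqref{condition.forexistence} enters, since it forces $\si(r)r^2\to\infty$ so the a priori bound \eqref{ineq.apriorilocal} reaches time $T$ at every large scale---and then runs a Liouville argument (the mollified $U_\e$ is curl-free and divergence-free, hence harmonic, and the mean value property plus the growth bound kill it). Without this step the theorem is not proved, because a local energy solution is a priori only a distributional solution with the local pressure expansion.

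Once both solutions are known to be mild, the paper also does not use an energy/Gr\"onwall scheme at all: it closes a contraction-type estimate for $\al(s)=\sup_{0<\tau\le s}\tau^{1/4}\|w(\tau)\|_{L^4_{\uloc,R}}$ using the Maekawa--Terasawa heat and Oseen estimates in uniformly local spaces, arriving at $\al(T)\le CC(\e)\al(T)$ and concluding $\al(T)=0$ by smallness. This absorption argument sidesteps the borderline $t^{-1}$ issue entirely---which is the structural reason the paper works with the integral equation rather than with weighted energy identities. Your opening step (choosing $R$, invoking Theorem~\ref{thrm.littleo}.2 for the $t^{-1/2}$ bounds, and reading off $T\sim R^2$) does match the paper, but the core of the proof is missing.
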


In particular, if $u_0\in M^{2,1}$, then $u_0$ satisfies \eqref{condition.forexistence} and Theorem \ref{thrm.uniquenessb} is applicable.
%\crm{$u_0\in M^{2,1}+E^2$, then we only need  $\lim_{R\to 0} N^0_R<\e_2$.}

Theorem \ref{thrm.uniquenessa} is motivated by Jia \cite{Jia-uniqueness} who established uniqueness for local energy solutions with small data in $L^{3,\I}$.  Our proof mainly  follows his argument, although going from $L^{3,\I}$ to $M^{2,1}$ introduces some technical hurdles. Lemari\'e-Rieusset includes a similar theorem in \cite[Theorem 2]{LR-Morrey}. We note that our result is an improvement because Lemari\'e-Rieusset's assumptions imply $\lim_{R\to 0} N^0_R = 0$ while we allow this to be positive but small. Hence our result may include small SS or DSS data.  Furthermore, the only solutions considered in \cite[Theorem 2]{LR-Morrey} are the limits of the regularized system, while ours come from a more general class.

It is interesting to note that Morrey spaces and local energy methods have recently played a role in \cite{LR-unique} in answering an interesting question of T.~Barker \cite{barker} concerning local uniqueness of suitable weak solutions with data in $L^2 \cap X$ where $X$ is a subspace of $BMO^{-1}$ which imposes some smoothness on the data.

Let us remark that combining Theorems \ref{thrm.littleo}, \ref{thrm.existence}, and \ref{thrm.uniquenessa} yields a global well-posedness result reminiscent of \cite{Kato}  for small data in $M^{2,1}$ but is proved using an entirely different method (see also \cite{LR-Morrey,Taylor}). Their solutions live in $L^\infty(0,\infty;M^{2,1})$ while ours are local energy solutions.

As a corollary of Theorem \ref{thrm.uniquenessb}, we obtain  local in time uniqueness of local energy solutions with initial data in $E^3$, which is the closure of $C_c^\I$ in the $L^3_{\uloc}$ norm.
This gives an alternative proof of the uniqueness part of \cite[Theorem 33.2]{LR}.

\begin{corollary}[Local uniqueness in $E^3$]\label{cor.uniqueness} 
Assume $u_0\in E^3$ and is divergence free.  Let $u$ and $v$ be elements of $\cN(u_0)$. Then, there exists $T=T(u_0)>0$ so that  $u=v$ as distributions on $\R^3\times (0,T)$. 
\end{corollary}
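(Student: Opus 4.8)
The plan is to deduce Corollary \ref{cor.uniqueness} directly from Theorem \ref{thrm.uniquenessb} by verifying that every hypothesis of that theorem is satisfied whenever $u_0\in E^3$ is divergence free. Theorem \ref{thrm.uniquenessb} asks for three things: that $u_0\in L^2_\uloc$ be incompressible; that $u_0$ either satisfy \eqref{condition.forexistence} or lie in $E^2$; and that $\lim_{R\to 0}N^0_R<\e_2$. Incompressibility is assumed outright, so the entire task reduces to checking the two remaining inclusions and the small-scale limit.

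First I would record the continuous embedding $L^3_\uloc\hookrightarrow L^2_\uloc$. On each unit ball $B(x,1)$, Hölder's inequality gives $\norm{u_0}_{L^2(B(x,1))}\lec \norm{u_0}_{L^3(B(x,1))}$, and taking the supremum over $x$ yields $\norm{u_0}_{L^2_\uloc}\lec \norm{u_0}_{L^3_\uloc}$; in particular $u_0\in L^2_\uloc$. Moreover, if $\phi_n\in C_c^\I$ converges to $u_0$ in $L^3_\uloc$ (such a sequence exists by the definition of $E^3$), then by this embedding the same sequence converges to $u_0$ in $L^2_\uloc$, so $u_0\in E^2$. This disposes of the second hypothesis.

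The quantitative heart of the reduction is the claim that $\lim_{R\to 0}N^0_R=0$ for $u_0\in E^3$, which is far stronger than the required $\lim_{R\to 0}N^0_R<\e_2$. Here I would fix $R\le 1$ and apply Hölder on $B_R(x_0)$ with exponents $3$ and $3/2$ to obtain $\int_{B_R(x_0)}|u_0|^2\lec R\bke{\int_{B_R(x_0)}|u_0|^3}^{2/3}$, so that $N^0_R\lec \sup_{x_0}\bke{\int_{B_R(x_0)}|u_0|^3}^{2/3}$. Given $\delta>0$, choose $\phi\in C_c^\I$ with $\norm{u_0-\phi}_{L^3_\uloc}<\delta$ and split $u_0=\phi+(u_0-\phi)$ inside the $L^3(B_R(x_0))$ triangle inequality. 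The tail contributes at most $\norm{u_0-\phi}_{L^3_\uloc}<\delta$ uniformly in $x_0$ for $R\le 1$, while the smooth compactly supported piece obeys $\bke{\int_{B_R(x_0)}|\phi|^3}^{1/3}\lec \norm{\phi}_{L^\I}R$, which is below $\delta$ once $R$ is small. Hence $N^0_R\lec \delta^2$ for all sufficiently small $R$, and letting $\delta\to 0$ proves the claim.

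With all hypotheses of Theorem \ref{thrm.uniquenessb} now in hand, I conclude there exists $T\sim R^2>0$, where $R$ is chosen so that $\sup_{0<r\le R}N^0_r\le \e_2$, on which $u=v$ as distributions. I do not anticipate a genuine obstacle: the argument is a short chain of Hölder estimates on balls combined with the density of $C_c^\I$ built into the definition of $E^3$. The only point that demands a little care is that the approximants $\phi_n$ need not be divergence free, but this is harmless, since membership in $E^2$ is a statement about $u_0$ alone and the incompressibility of $u_0$ is part of the hypothesis.
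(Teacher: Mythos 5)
Your proposal is correct and follows essentially the same route as the paper: both reduce Corollary \ref{cor.uniqueness} to Theorem \ref{thrm.uniquenessb} by checking that $u_0\in E^2$ and that $N^0_R\to 0$ as $R\to 0$, the latter via H\"older's inequality on small balls. The only (cosmetic) difference is how the uniform smallness of $\sup_{x_0}\int_{B_R(x_0)}|u_0|^3\,dx$ is obtained: you split the function as $u_0=\phi+(u_0-\phi)$ with $\phi\in C_c^\I$ close in $L^3_\uloc$, whereas the paper splits space into a far region $|x_0|\ge R_0$ (where the decay of $E^3$ functions makes the $L^3$ mass on unit balls small) and a compact near region (where absolute continuity of the integral does the job); the two verifications are equivalent and equally elementary.
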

In the preceding corollary, $T$ only depends on $u_0$, and the smallness assumption is hidden in the spatial decay of $u_0$. This result is not new, but our proof is and we include it to emphasize the usefulness of the arguments. Note that it also follows from \cite[Theorem 2]{LR-Morrey}.
 
We can go further concerning uniqueness problems. In \cite{LR-Morrey}, Lemari\'e-Rieusset stated a  problem concerning uniqueness of certain solutions in $C([0,T];\td M^{2,1})$ where $\td M^{2,1}$ denotes the closure of $C_c^\I$ in $M^{2,1}$: 

\medskip {\bf Problem:} If $u_0\in L^2$ is divergence free and $u\in L^\I((0,T);L^2)\cap L^2((0,T);H^1) \cap C ([0,T] ; \td M^{2,1} )$ along with a pressure $p$ solve the Navier-Stokes equations, then is $u$ a Leray solution (in the sense of \cite[Definition 2]{LR-Morrey}) and moreover, is it the unique Leray solution?  
 
 \medskip 
This problem appears to have been answered affirmatively in  \cite{LR-Besov} where a more general uniqueness result is given in $ C ([0,T] ; \td B^{-1}_{\I,\I})$.  Theorem \ref{thrm.uniquenessb} gives another way of addressing this question. In fact, we address a more general localized version of the problem that appears to be new.   

Let us introduce some notation. Recall $m^{2,1}$ is defined by \eqref{mps1}.
Let $m^{2,1}_\e$ be the collection of $f\in m^{2,1}$ so that  
\[
\limsup_{r\to 0^+ }\sup_{x_0\in \R^3}   \frac 1 {r} \int_{B_r(x_0)} |f|^2\,dx \leq \e.
\]
We also let $\td m^{2,1}$ denote the closure of the test functions in $m^{2,1}$. 
It is a strict subset of $m^{2,1}_0$ as $\td m^{2,1}\subset E^2$;  see Remark \ref{lemma.morreyspacedecay-rmk}.
 
Using Theorem \ref{thrm.uniquenessb} we are able to prove the following theorem.

\begin{theorem}[Weak-strong uniqueness]\label{thrm.uniqueness2}
Fix $T\in (0,\I)$. Let $u_0\in E^2$ be divergence free. Let $\e_2$ be as in Theorem \ref{thrm.uniquenessb}.   Let $0<\e<\e_2$ be given. 
Assume $u \in \mathcal N(u_0)\cap C([0,T) ;m_{\e}^{2,1})$ and $v\in \mathcal N(u_0)$.  Then $u=v$ as distributions on $\R^3\times (0,T)$.
\end{theorem}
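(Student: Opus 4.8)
The plan is to reduce Theorem~\ref{thrm.uniqueness2} to an application of Theorem~\ref{thrm.uniquenessb} by exploiting the extra regularity built into the hypothesis $u \in C([0,T);m^{2,1}_\e)$. The key observation is that the condition $u \in C([0,T);m^{2,1}_\e)$ controls the behavior of $u$ at small scales uniformly in time near $t=0$. In particular, since $u(\cdot,0)=u_0$ and $u$ is continuous into $m^{2,1}_\e$, the truncated Morrey quantity $\sup_{0<r\le R}\sup_{x_0}\frac1r\int_{B_r(x_0)}|u(x,t)|^2\,dx$ should be small for $t$ and $R$ small. First I would make this precise: by continuity there exists $t_*>0$ such that for all $t\in[0,t_*]$ the small-scale Morrey seminorm of $u(\cdot,t)$ stays below a controlled threshold, say below $\e_2$ (or a suitable multiple thereof). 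This plays the role that the smallness hypothesis $\lim_{R\to0}N^0_R<\e_2$ played in Theorem~\ref{thrm.uniquenessb}, except now we have it along the whole curve $u(\cdot,t)$ rather than only at the initial time.

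The core of the argument is then to run the uniqueness machinery of Theorem~\ref{thrm.uniquenessb} to obtain equality $u=v$ on a short time interval $(0,T_1)$ for some $T_1>0$, and afterward to bootstrap this to the full interval $(0,T)$. For the first step, note that $u_0\in E^2$ and $\lim_{R\to0}N^0_R\le \e<\e_2$ is guaranteed by $u\in C([0,T);m^{2,1}_\e)$ evaluated at $t=0$ (so that the hypotheses of Theorem~\ref{thrm.uniquenessb} are literally met); this immediately yields $u=v$ on $(0,T_1)$ with $T_1\sim R^2$. The real content is the continuation. The plan is to use the curve smallness $u\in C([0,T);m^{2,1}_\e)$ to restart the argument from a later time $t_0$: because $u(\cdot,t_0)$ still has small small-scale Morrey seminorm, one can treat $t_0$ as a new initial time and reapply the local uniqueness mechanism to push equality forward by another increment of length $\sim R^2$ (where the scale $R$ is uniform because the smallness is uniform in $t\in[0,T)$, being controlled by $\e<\e_2$). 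Iterating this finitely many times covers all of $[0,T)$.

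The main obstacle will be the continuation/restart step, for two reasons. First, Theorem~\ref{thrm.uniquenessb} is phrased with an initial-time smallness condition and the $E^2$ or \eqref{condition.forexistence} decay assumption; to restart at $t_0>0$ one must verify that $u(\cdot,t_0)$ (which a priori is only $L^2_\uloc$ with small-scale Morrey control) can serve as admissible data, i.e.\ that $v$ restricted to $[t_0,T)$ is still a local energy solution with this data and that the relevant pressure decomposition and local energy inequality survive the time shift. Since we already have $u=v$ up to time $t_0$ from the previous step, the two solutions share the same value $u(\cdot,t_0)=v(\cdot,t_0)$, so the restart is on matching data; the subtlety is purely that one must check the hypotheses of the local uniqueness theorem (the smallness of $\lim_{R\to0}$ of the truncated Morrey quantity, and an appropriate decay replacement for \eqref{condition.forexistence} or $E^2$ at the new time) hold at $t_0$. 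The second difficulty is ensuring the time increments do not shrink to zero, which requires that the controlling scale $R$ stay bounded below uniformly on $[0,T)$; this follows from $u\in C([0,T);m^{2,1}_\e)$ with $\e<\e_2$ because the sup over $t$ of the small-scale seminorm remains below $\e_2$, giving a uniform admissible $R$ and hence a uniform step length, so finitely many steps exhaust $[0,T)$.

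Once the uniform step length and the matching-data restart are established, the finite-iteration covering argument is routine and delivers $u=v$ as distributions on $\R^3\times(0,T)$. I would organize the writeup as: (i) extract uniform small-scale smallness of $u(\cdot,t)$ from the $C([0,T);m^{2,1}_\e)$ hypothesis and fix a uniform scale $R$; (ii) invoke Theorem~\ref{thrm.uniquenessb} at $t=0$ to get equality on an initial interval; (iii) verify the restart hypotheses at a generic time $t_0$ where equality already holds, using that $u(\cdot,t_0)=v(\cdot,t_0)$ inherits the small-scale Morrey control; and (iv) iterate and conclude by a standard continuation argument that the set of times where $u=v$ is both open and closed in $[0,T)$, hence all of it.
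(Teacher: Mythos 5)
Your proposal follows the same strategy as the paper: extract a time-uniform small-scale Morrey bound from the $C([0,T);m^{2,1}_\e)$ hypothesis, then iterate Theorem \ref{thrm.uniquenessb} with a uniform time step, restarting at times where equality already holds (the restart data being $u(t_0)=v(t_0)$). However, the two steps you treat as routine are exactly where the work lies, and one of them is justified by an invalid inference as written.

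The crux is your step (i). You claim the uniform scale $R$ exists ``because the sup over $t$ of the small-scale seminorm remains below $\e_2$.'' This is a sup/limsup interchange that fails in general: for each fixed $t$, the hypothesis $u(t)\in m^{2,1}_\e$ gives a scale $R(t)$ with $\sup_{0<r<R(t)}\sup_{x_0}\frac1r\int_{B_r(x_0)}|u(t)|^2\,dx<\e_2$, but pointwise-in-$t$ control says nothing about $\inf_t R(t)>0$; the scales could degenerate as $t$ varies. Establishing this uniform lower bound is precisely the content of the paper's proof, and it is done by contradiction: if $\bar r(t_n)\to 0$ along $t_n\to t_*$, then continuity of $t\mapsto u(t)$ in the $m^{2,1}$ norm gives $\|u(t_n)-u(t_*)\|_{m^{2,1}}<\sqrt{\e_2}-\sqrt{\e}$ for large $n$, and since $u(t_*)\in m^{2,1}_\e$ admits a good scale $\td r$ with $\|u(t_*)\|_{M^{2,1}_{<\td r}}\le \sqrt\e$, the triangle inequality in $M^{2,1}_{<\td r}$ forces $\bar r(t_n)\ge \td r$, a contradiction. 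So your attribution to continuity is correct, but the argument connecting the hypothesis to the conclusion is missing; note also that the limit point $t_*$ must lie in the domain of continuity, so the argument should be run on compact subintervals $[0,T']$, $T'<T$, and then exhausted. Second, your step (iii) flags but does not resolve the restart hypothesis of Theorem \ref{thrm.uniquenessb}: the paper settles this at the outset by observing that $u_0\in E^2$ implies $u(t)\in E^2$ for every $t$ (citing \cite{KiSe,KMT}), so each restart datum satisfies the ``$u_0\in E^2$'' alternative of that theorem; no separate verification of \eqref{condition.forexistence} is needed. With these two points filled in, your plan coincides with the paper's proof.
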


There is also a weak-strong uniqueness result in \cite[Theorem 14.7]{LR2} for solutions that can be split into a small part in a critical multiplier space which embeds strictly into $M^{2,1}$ and a non-critical part (see \cite[p. 94]{LR2}).

Clearly $\td m^{2,1}\subset m^{2,1}_\e$ for any $\e>0$ (see Lemma \ref{lemma.morreyspacedecay}). However, $u\in C([0,\I) ; \td m^{2,1})$ implies several of the items from the definition of local energy solution. It is thus not difficult to arrive at a corollary to Theorem \ref{thrm.uniqueness2} where sufficient conditions for $u\in \mathcal N(u_0)$ are hidden in the assumption that $u\in C([0,\I) ; \td m^{2,1})$.
In fact, this is a local analogue to the problem given by Lemari\'e-Rieusset in \cite{LR-Morrey}.
 
\begin{corollary}[Generalized Von-Wahl uniqueness criteria]\label{cor.uniqueness2}
Let $u_0\in L^2_\uloc$ be divergence free.
Assume $u\in C([0,\I) ; \td m^{2,1})$, there {exists a pressure $p\in L^{3/2}_\loc(\R^3\times [0,\infty))$} so that $(u,p)$ solve \eqref{eq.NSE} as distributions and for all $T<\I$,
\[
\sup_{x_0\in \R^3} \int_0^T \int_{B_2(x_0)} |\nb u|^2\,dx \,dt <\I.
\]
Then $u\in \mathcal N(u_0)$ and is unique in $\mathcal N(u_0)$.

\end{corollary}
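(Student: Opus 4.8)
The plan is to verify that $u$ satisfies each of the six conditions of Definition \ref{def:localLeray}, so that $u\in\cN(u_0)$ with $u_0=u(0)$, and then to invoke Theorem \ref{thrm.uniqueness2} to obtain uniqueness. Note at the outset that $u(0)\in\td m^{2,1}\subset E^2$ (Remark \ref{lemma.morreyspacedecay-rmk}); since condition (4) of Definition \ref{def:localLeray} forces the data to be attained as the $L^2(K)$-limit of $u(t)$, and $u\in C([0,\I);\td m^{2,1})$ already gives $u(t)\to u(0)$ in $L^2(K)$, we necessarily have $u_0=u(0)\in E^2$. This is exactly the regularity of the data demanded by Theorem \ref{thrm.uniqueness2}.

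Several conditions are immediate. Condition (1) is assumed. For condition (2), continuity of $t\mapsto u(t)$ into $m^{2,1}$ makes $\sup_{t\in[0,R^2]}\|u(t)\|_{m^{2,1}}$ finite on the compact interval; covering $B_R(x_0)$ by unit balls then gives $\sup_{x_0}\int_{B_R(x_0)}|u(x,t)|^2\,dx\lesssim(1+R^3)\|u(t)\|_{m^{2,1}}^2$, while covering $B_R(x_0)$ by balls of radius $2$ together with the assumed bound on $\nb u$ over $B_2(x_0)$ controls $\sup_{x_0}\int_0^{R^2\wedge T}\int_{B_R(x_0)}|\nb u|^2$ for each fixed $R$. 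Condition (4) holds since $m^{2,1}$-convergence dominates $L^2(K)$-convergence on compact $K$ (cover $K$ by finitely many unit balls). Condition (6) follows from the stronger continuity of $t\mapsto u(t)$ in $m^{2,1}$, which controls $\int u(t)\cdot w$ for compactly supported $w\in L^2$ via the previous covering bound.

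The two substantive conditions are (3) and (5). For the pressure decomposition (3), I would first establish the decay condition
\[
\lim_{|x_0|\to\I}\int_0^{R^2}\int_{B_R(x_0)}|u|^2\,dx\,dt=0,\qquad\forall R>0.
\]
Because $t\mapsto u(t)$ is continuous into $\td m^{2,1}$, the trace $\{u(t):t\in[0,R^2]\}$ is compact there; as every element of $\td m^{2,1}\subset E^2$ has spatially decaying local $L^2$-mass, this decay is \emph{uniform} over the compact set, and integrating in time gives the displayed limit. This decay condition is precisely what produces the representation \eqref{pressure.dec} for the given pressure $p$ (up to the additive $c_{x_0,R}(t)$), as established in \cite{MaMiPr,KMT} (see also \cite{JiaSverak-minimal}). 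For the local energy inequality (5), the integrability furnished by condition (2), namely $u\in L^\I_\loc L^2\cap L^2_\loc\dot H^1$ locally, yields $u\in L^{10/3}_\loc$ in space-time by the usual energy-space interpolation, so that $|u|^2u$ and $pu$ lie in $L^1_\loc$; mollifying the equation and testing against $\phi u$ then produces the local energy \emph{equality}, hence \eqref{CKN-LEI}. I expect condition (5), together with the uniform-decay step in (3), to be the main technical obstacle, as both require care in justifying the approximation and limiting procedures.

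With $u\in\cN(u_0)$ in hand, uniqueness is a direct application of Theorem \ref{thrm.uniqueness2}. We have $u_0\in E^2$, and since $\td m^{2,1}\subset m^{2,1}_\e$ for every $\e>0$ (Lemma \ref{lemma.morreyspacedecay}), we may fix any $\e\in(0,\e_2)$ and deduce $u\in\cN(u_0)\cap C([0,T);m^{2,1}_\e)$ for each finite $T$. Given an arbitrary $v\in\cN(u_0)$, Theorem \ref{thrm.uniqueness2} yields $u=v$ as distributions on $\R^3\times(0,T)$; since $T<\I$ was arbitrary, letting $T\to\I$ gives $u=v$ on $\R^3\times(0,\I)$, as claimed.
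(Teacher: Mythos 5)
Your overall strategy matches the paper's: verify the items of Definition \ref{def:localLeray} (with the pressure expansion obtained from the Jia--\v{S}ver\'ak decay condition $\lim_{|x_0|\to\I}\int_0^{R^2}\int_{B_R(x_0)}|u|^2\,dx\,dt=0$, which your compactness-of-the-trace argument establishes correctly), and then apply Theorem \ref{thrm.uniqueness2} on each finite interval. However, your treatment of condition (5), the local energy inequality, has a genuine gap. You claim that $u\in L^\I_\loc L^2\cap L^2_\loc \dot H^1$ gives $u\in L^{10/3}_\loc$ in space-time by interpolation, and that this integrability suffices to run the mollification argument and obtain the local energy \emph{equality}. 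This is false as stated: mere membership of $|u|^2u$ and $pu$ in $L^1_\loc$ makes the terms in \eqref{CKN-LEI} well defined, but it does not justify passing to the limit in the cubic term. In the mollified computation one must pass to the limit in $\iint (u\otimes u)_\e : \nb u_\e\, \phi$, which pairs $(u\otimes u)_\e$ against $\nb u_\e\in L^2$; this requires $u\otimes u\in L^2_\loc$, i.e.\ $u\in L^4_\loc$ in space-time, and the exponent $10/3$ is strictly below this threshold (H\"older fails: $3/5+1/2>1$). Indeed, if energy-class regularity alone implied the (local) energy equality, the classical open problem of energy equality for Leray--Hopf weak solutions would be resolved; Leray--Hopf solutions lie exactly in this class.

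This is precisely where the hypothesis $u\in C([0,\I);\td m^{2,1})$ must be used quantitatively, and it is what the paper does: one proves the Ladyzhenskaya-type interpolation inequality
\begin{equation*}
\norm{f}_{L^4} \lec \norm{f}_{M^{2,1}}^{1/2}\,\norm{\nb f}_{L^2}^{1/2},
\end{equation*}
(obtained by scaling from $\norm{f}_{L^4}\lec \norm{f}_{M^{2,1}}+\norm{\nb f}_{L^2}$), applies it to $u\,\psi_{x_0}$ with a cutoff $\psi_{x_0}$, and combines the uniform-in-time $m^{2,1}$ bound with the assumed local bound on $\nb u$ to conclude
\begin{equation*}
\sup_{x_0\in\R^3}\int_0^T\!\!\int_{B_1(x_0)}|u|^4\,dx\,dt<\I .
\end{equation*}
It is this $L^4_{t,x}$ (locally uniform) bound — not the $L^{10/3}$ bound — that licenses the mollify-and-test argument and yields the local energy inequality. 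So your proof becomes correct once you replace the interpolation step in condition (5) by this Morrey-based $L^4$ estimate; the remainder of your argument (conditions (1), (2), (4), (6), the pressure expansion via the decay condition, and the concluding application of Theorem \ref{thrm.uniqueness2} with $\td m^{2,1}\subset m^{2,1}_\e$ for $\e<\e_2$) is sound and coincides with the paper's proof.
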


We can also use Theorem \ref{thrm.uniquenessa} to show $\la$-DSS solutions are unique provided the initial data is small in $L^2_\uloc$.

\begin{corollary}[Uniqueness of small-data DSS solutions]\label{cor.DSS2}
Assume $u_0\in L^2_{\uloc}$  is divergence free and $\la$-DSS for some $\la> 1$, and that $u\in \cN(u_0)$.  If  $\|u_0\|_{L^2_{\uloc}}<\e_2/\sqrt\la$, then $u$ is unique in $\cN(u_0)$.  
\end{corollary}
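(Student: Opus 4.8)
The plan is to deduce the corollary directly from Theorem \ref{thrm.uniquenessa} by converting the $L^2_{\uloc}$ smallness hypothesis into $M^{2,1}$ smallness, using only the $\la$-DSS structure of $u_0$. Since Theorem \ref{thrm.uniquenessa} guarantees that $\|u_0\|_{M^{2,1}}\le\e_2$ forces any two elements of $\cN(u_0)$ to agree as distributions on $\R^3\times(0,\I)$, it suffices to show that $\|u_0\|_{L^2_{\uloc}}<\e_2/\sqrt\la$ implies $\|u_0\|_{M^{2,1}}<\e_2$. Recalling that $\|u_0\|_{M^{2,1}}^2=\sup_{R>0}N_R^0$, this reduces to the scale-free bound $\sup_{R>0}N_R^0\le\la\,\|u_0\|_{L^2_{\uloc}}^2$, which is precisely the DSS norm equivalence \eqref{DSSL2uloc}.

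The first step is to use the DSS scaling to collapse the supremum over all scales to a single bounded range. A change of variables in $u_0(x)=\la u_0(\la x)$ gives $\int_{B_R(x_0)}|u_0|^2\,dx=\la^{-1}\int_{B_{\la R}(\la x_0)}|u_0|^2\,dx$, and since $x_0\mapsto\la x_0$ is a bijection of $\R^3$, taking the supremum over centers yields $N_R^0=N_{\la R}^0$ for every $R>0$. Thus $R\mapsto N_R^0$ is invariant under $R\mapsto\la R$, so $\sup_{R>0}N_R^0=\sup_{1\le R\le\la}N_R^0$.

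The second step bounds $N_R^0$ on the fundamental range $R\in[1,\la]$ by the $L^2_{\uloc}$ norm. Rather than covering $B_R(x_0)$ by unit balls, I use the scaling in the \emph{opposite} direction: the same identity gives $\int_{B_R(x_0)}|u_0|^2\,dx=\la\int_{B_{R/\la}(x_0/\la)}|u_0|^2\,dx$, and for $R\le\la$ the rescaled ball $B_{R/\la}(x_0/\la)$ sits inside the unit ball $B_1(x_0/\la)$, so $\int_{B_R(x_0)}|u_0|^2\,dx\le\la\,\|u_0\|_{L^2_{\uloc}}^2$. Dividing by $R\ge1$ and taking suprema over $x_0$ and over $R\in[1,\la]$ gives $\sup_{R>0}N_R^0\le\la\,\|u_0\|_{L^2_{\uloc}}^2$, i.e. $\|u_0\|_{M^{2,1}}\le\sqrt\la\,\|u_0\|_{L^2_{\uloc}}$. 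Combining with the hypothesis $\|u_0\|_{L^2_{\uloc}}<\e_2/\sqrt\la$ yields $\|u_0\|_{M^{2,1}}<\e_2$, and Theorem \ref{thrm.uniquenessa} then forces $u=v$ for every $u,v\in\cN(u_0)$, which is the asserted uniqueness.

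Since all of the analytic work is already carried by Theorem \ref{thrm.uniquenessa}, I do not anticipate a genuine obstacle; the one point that must be handled correctly is the sharp power of $\la$ in the norm equivalence. The naive covering of $B_R(x_0)$ by unit balls loses a factor $R^2\le\la^2$ and produces the weaker bound $\|u_0\|_{M^{2,1}}\le C\la\,\|u_0\|_{L^2_{\uloc}}$, which would demand the stronger hypothesis $\|u_0\|_{L^2_{\uloc}}<\e_2/\la$. The decisive observation is therefore that rescaling each ball \emph{down} below unit scale — where $L^2_{\uloc}$ applies verbatim — costs exactly one power of $\la$, recovering the sharp constant $\sqrt\la$ recorded in \eqref{DSSL2uloc} and matching the stated hypothesis $\|u_0\|_{L^2_{\uloc}}<\e_2/\sqrt\la$.
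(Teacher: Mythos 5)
Your proposal is correct and follows essentially the same route as the paper: both deduce the corollary from Theorem \ref{thrm.uniquenessa} via the sharp DSS norm equivalence $\|u_0\|_{M^{2,1}}\le \sqrt\la\,\|u_0\|_{L^2_\uloc}$ of \eqref{DSSL2uloc} (Lemma \ref{lemma.dss.equiv}), which the paper proves by bracketing $\la^k\le R<\la^{k+1}$ and rescaling the enlarged ball to unit scale — the same one-power-of-$\la$ rescaling you perform after first reducing to the fundamental range $R\in[1,\la]$. Your scale-invariance step $N_R^0=N_{\la R}^0$ is just a repackaging of that bracketing, so the two arguments are the same in substance.
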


As a concluding remark, note that, although we are considering several different problems concerning local energy solutions, there are two unifying themes that recur throughout this paper. The first is that all main results connect Morrey-type norms or truncations of these norms to small or large scales to the analysis of the Navier-Stokes equations, highlighting the importance of these quantities. The second is that all the main results rely on a tremendously useful a priori bound which was discovered by Lemari\'e-Rieusset and later explicitly extended to all scales in \cite{JiaSverak-minimal} (see inequality \eqref{ineq.apriorilocal}; the first use of local energy methods at large scales appears to be in \cite{LR-Besov}).

The paper is arranged as follows.  Theorems \ref{thrm.littleo} and \ref{thrm.existence} are proven respectively in Sections \ref{sec.proofs1} and \ref{sec.existence}, while Theorems \ref{thrm.uniquenessa} and \ref{thrm.uniquenessb} are proven in Section \ref{sec.unique}.  The corollaries are proved in Section \ref{sec.proofs2}. An appendix is included as Section \ref{sec.appendix} to illustrate the relationships between several function spaces appearing in this paper.

In the end of the introduction, we consider a related concept of ``\emph{far-field regularity}'' which means that, for any finite $T>0$, there is a large $R$ such that the solution is regular in $\bket{(x,t)\in \R^3 \times (0,T):\ |x|>R}$. This property is well-known for weak solutions of \eqref{eq.NSE} in the energy class. For local energy solutions with $u_0 \in E^2$, this can be derived using an argument of \cite[page 354]{LR} based on the $\e$-regularity criterion of Caffarelli-Kohn-Nirenberg (see Theorem \ref{thrm.epsilonreg}): Specifically, using Theorem \ref{thrm.epsilonreg} and the decay
\[
\lim_{|x_0|\to \infty} \int_0^T \int_{B_1(x_0)}|u|^3 + |p-c_{x_0,1}(t)|^{3/2}\,dx\,dt=0
\]
(see  \cite[Proposition 32.2]{LR} and \cite[Lemma 2.2]{KiSe}), we can show for any $0<t_1<t_2<T$ that $u \in L^\infty((t_1,t_2)\times B_{R_0}^c)$ for $R_0$ sufficiently large.  See \cite[Corollary 4.8]{KwTs} for details of its application that $u(t)\in E^3$ for a.e.~$t$,  and \cite{AlBa} for an extension for Besov space data.

It is unclear if far field regularity holds in classes where there is no decay (in the $E^2$ sense) at spatial infinity, e.g.~$M^{2,1}$.   Consider for example an initial data that looks like 
\EQ{\label{eq1.13}
f(x)=\sum_{k\in \Z}  \frac {\chi_{B_{1/8}(0)} (x-ke_1)  } {|x-ke_1|}.
}
Then $f(x)\in M^{2,1} \setminus E^2$ and, based on the periodicity in the $e_1$ direction, far-field regularity is equivalent to regularity. This suggests that far-field regularity may fail for $u_0\in M^{2,1}$.

%\cmr{As a final note, very recently Fern\'andez-Dalga and Lemari\'e-Rieusset released an interesting paper \cite{FDLR} addressing global existence in a general context related to Theorem \ref{thrm.existence}. Because we have spoken publicly about Theorem \ref{thrm.existence} in seminars and workshops prior to the release of \cite{FDLR}, we have chosen to include the theorem here.}

When this manuscript is near completion, Fern\'andez-Dalga and Lemari\'e-Rieusset released an interesting paper \cite{FDLR} addressing global existence in a general context related to Theorem \ref{thrm.existence}. Our Theorem \ref{thrm.existence} is independent of their work, and has been presented in the Nonlinear Analysis seminar in Rutgers University on April 9, 2019, in a plenary lecture of the 
International Congress of Chinese Mathematicians on June 13, 2019, in Tsinghua University, Beijing, and in Henan University, Kaifeng, on June 16, 2019.

\section{Eventual and initial regularity}\label{sec.bound}\label{sec.proofs1}

In this section we prove Theorem \ref{thrm.littleo}. There are two main ingredients, an a priori estimate in \cite{JiaSverak-minimal} and a version of the Cafarelli-Kohn-Nirenberg regularity criteria. We recall both as lemmas.

\begin{lemma}\label{lemma.JiaSve}
Let $u_0\in L^2_\uloc$, $\div u_0=0$, and assume $u\in \mathcal N (u_0)$.  For all $r>0$ we have
\begin{equation}\label{ineq.apriorilocal}
\esssup_{0\leq t \leq \sigma r^2}\sup_{x_0\in \RR^3} \int_{B_r(x_0)}\frac {|u|^2} 2 \,dx\,dt + \sup_{x_0\in \RR^3}\int_0^{\sigma r^2}\int_{B_r(x_0)} |\nabla u|^2\,dx\,dt <CA_0(r) ,
\end{equation}

\begin{equation}\label{ineq.apriorilocal2}
\sup_{x_0\in \RR^3} \int_0^{\sigma r^2}\!\!\int_{B_r(x_0) }\big( | u|^3  +|p-c_{x_0,r}(t)|^{3/2}  \big)\,dx\,dt
 <C r^{\frac 12} A_{0}(r)^{\frac 32},
\end{equation}
where
\[
A_0(r)=rN^0_r= \sup_{x_0\in \R^3} \int_{B_r(x_0)} |u_0|^2 \,dx,
\] 
and
\begin{equation}\label{def.sigma}
\si=\sigma(r) =c_0\, \min\big\{(N^0_r)^{-2} , 1  \big\},
\end{equation}
for a small universal constant $c_0>0$.  
\end{lemma}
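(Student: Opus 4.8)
The plan is to establish the a priori bounds in Lemma~\ref{lemma.JiaSve} by running a local energy argument on the scaled solution and bootstrapping over all spatial scales simultaneously, exactly in the spirit of Jia--\v{S}ver\'ak. First I would reduce to the unit scale $r=1$ by exploiting the natural scaling $u^\lambda(x,t)=\lambda u(\lambda x,\lambda^2 t)$ with $\lambda = r$: since $N^0_r$ for $u_0$ equals $N^0_1$ for the rescaled data $u_0^r$, and the quantities in \eqref{ineq.apriorilocal}--\eqref{ineq.apriorilocal2} transform homogeneously under this scaling, it suffices to prove the estimate at $r=1$ with $\sigma$ depending on $N^0_1$. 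This is the standard way to produce the $r^{1/2}A_0(r)^{3/2}$ and $\sigma(r) = c_0\min\{(N^0_r)^{-2},1\}$ structure without carrying $r$ through every step.

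Next I would run the local energy inequality \eqref{CKN-LEI}. Choosing a cutoff $\phi$ that is spatially localized to a ball $B_1(x_0)$ (times a time cutoff) and using \eqref{CKN-LEI} gives an inequality controlling $\sup_t\int_{B_1}|u|^2 + \int\!\!\int_{B_1}|\nabla u|^2$ by the initial data term $\int_{B_2(x_0)}|u_0|^2$ plus the cubic term $\int\!\!\int |u|^3|\nabla\phi|$ and the pressure--velocity term $\int\!\!\int p\,(u\cdot\nabla\phi)$. The key is to take the supremum over $x_0\in\R^3$ of the resulting local quantities and close the estimate. Define the running quantity $\alpha(t) := \sup_{x_0}\int_{B_1(x_0)}|u|^2(\cdot,t)\,dx + \sup_{x_0}\int_0^t\!\int_{B_1(x_0)}|\nabla u|^2$. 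The cubic and pressure terms must be bounded by interpolating $\|u\|_{L^3}$ between $\|u\|_{L^2}$ and $\|\nabla u\|_{L^2}$ on balls (via a local Gagliardo--Nirenberg/Sobolev inequality, picking up the uniformly-local structure), and by using the pressure representation \eqref{pressure.dec} to control $\|p - c_{x_0,1}(t)\|_{L^{3/2}(B_2)}$ in terms of $\|u\otimes u\|$ on a larger (all-scales) region — this is where the full pressure decomposition at scale $R$, rather than merely $R=1$, is essential for the uniform-in-$x_0$ bounds. The outcome should be a nonlinear Gronwall-type differential inequality of the form $\alpha(t) \lesssim A_0(1) + \int_0^t \alpha(s)^{3/2}\,ds$ (schematically), from which a continuity/bootstrap argument yields $\alpha(t)\le C A_0(1)$ on the time interval $[0,\sigma]$ with $\sigma \sim \min\{(N^0_1)^{-2},1\}$.

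The pressure estimate \eqref{ineq.apriorilocal2} would then follow by feeding the velocity bound \eqref{ineq.apriorilocal} back into the pressure representation: the singular integral part $\Delta^{-1}\div\div[(u\otimes u)\chi_{4}]$ is bounded in $L^{3/2}$ by Calder\'on--Zygmund theory applied to $u\otimes u\in L^{3/2}$, and the far-field difference kernel term $K(x-y)-K(x_0-y)$ gains decay that is summable against the uniformly-local $L^{3/2}$ mass of $u\otimes u$ over dyadic shells, producing the $r^{1/2}A_0(r)^{3/2}$ bound after rescaling.

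The main obstacle I expect is closing the uniform-in-$x_0$ nonlinear estimate while keeping the smallness threshold and the time $\sigma$ consistent across all scales: the cubic and pressure terms are genuinely supercritical at the energy level, so the argument only closes because of the factor $\sigma(r)$ chosen as $c_0\min\{(N^0_r)^{-2},1\}$, which exactly absorbs the $\alpha^{3/2}$ growth over the short time window. Getting the pressure's nonlocal tail bounded \emph{uniformly} in $x_0$ — rather than for a fixed ball — is the delicate point, and it is precisely why the strengthened pressure decomposition \eqref{pressure.dec} at all scales $R$ (not just $R=1$ as in \cite{KiSe}) is built into Definition~\ref{def:localLeray}. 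Since this lemma is quoted from \cite{JiaSverak-minimal}, I would ultimately cite that reference for the detailed bookkeeping, presenting the above as the structural outline rather than grinding through the constants.
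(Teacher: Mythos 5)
Your outline reproduces the paper's overall strategy: local energy inequality, uniform-in-$x_0$ quantities, the all-scales pressure decomposition \eqref{pressure.dec} (you correctly identify why it must be built into Definition \ref{def:localLeray} rather than derived from decay), and absorption of the superlinear term over a window of length $\sigma r^2$. But there is a genuine gap at the closing step, and it is exactly the point where this lemma cannot simply be delegated to \cite{JiaSverak-minimal}. You finish with ``a continuity/bootstrap argument.'' In \cite{JiaSverak-minimal} that continuation argument relies on the continuity of the local energy quantity (their $A(\la)$), which is obtained from the decay of the solution, available because their data lie in $E^2$. For general $u_0 \in L^2_\uloc$ --- the setting of Lemma \ref{lemma.JiaSve} --- no such decay is available and this continuity is unknown: item 6 of Definition \ref{def:localLeray} only gives weak continuity of $t\mapsto \int u\cdot w$, hence at best lower semicontinuity of the local energy, which is not enough to run the standard open/closed bootstrap. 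The paper's proof deals with this by proving a dedicated Gr\"onwall-type lemma (Lemma \ref{lemma.gronwall}) valid for merely $f\in L^\infty_\loc([0,T);[0,\infty))$, possibly discontinuous: one replaces $f$ by its running essential supremum and compares it with the $C^1$ solution of the associated integral equation on short subintervals. This substitute for continuity is the one new ingredient of the paper's proof, and it is precisely what your proposal omits; citing \cite{JiaSverak-minimal} for ``the detailed bookkeeping'' does not cover it, since their bookkeeping breaks at this step for non-decaying solutions.

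A secondary inaccuracy: your schematic inequality $\al(t)\lesssim A_0(1)+\int_0^t \al(s)^{3/2}\,ds$ carries the wrong power. The H\"older/Young treatment of the cubic term produces $\norm{u}_{L^2}^6$, i.e.\ the local energy to the \emph{third} power, so the inequality has the form $\al \lesssim a + \int_0^t(\al+\al^3)\,ds$. It is this cubic power that forces the time scale $\sigma(r)\sim c_0\min\{(N^0_r)^{-2},1\}$ in \eqref{def.sigma}; a $3/2$ power would instead permit $\sigma\sim (N^0_r)^{-1/2}$, inconsistent with the statement you are trying to prove. Your scaling reduction to $r=1$ is fine (the class $\cN(u_0)$ and $N^0_r$ transform correctly), and your description of the pressure estimate \eqref{ineq.apriorilocal2} via Calder\'on--Zygmund plus the far-field kernel difference matches the paper.
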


See \cite[Lemma 3.5]{KMT} for revised \eqref{ineq.apriorilocal2} with higher exponents.

As mentioned in Section 1, the solutions in \cite{JiaSverak-minimal} are defined differently than they are here--we only require $u_0 \in L^2_{\uloc}$ and do not require $u_0 \in E^2$, and therefore assume \eqref{pressure.dec} explicitly.  Inspecting  \cite[Proof of Lemma 2.2]{JiaSverak-minimal}, however, reveals that the same conclusion is valid for our local energy solutions.  
The only difference is that our solutions are not decaying.
In \cite{JiaSverak-minimal}, decay is used to ensure the local pressure expansion is satisfied and that $A(\la)$ is continuous in $\la$ (see \cite[Page 1452 top]{JiaSverak-minimal}).  For us, the local pressure expansion is built into Definition \ref{def:localLeray}, but continuity is unclear when $u_0\in L^2_\uloc\setminus E^2$.   
To prove Lemma \ref{lemma.JiaSve} without continuity, we need the following version of Gr\"onwall's lemma.

\begin{lemma}\label{lemma.gronwall}Suppose $f(t) \in L^\infty_\loc([0,T); [0,\infty))$ satisfies, for some $m \ge 1$,
\[
f(t) \le a + b\int_0^t( f(s) + f(s)^m) ds, \quad 0<t<T,
\]
where $a,b>0$,
then for $T_0=\min(T,T_1)$, with $T_1$ defined by \eqref{T1.def}, we have $f(t) \le 2a$ for $t \in (0,T_0)$.
\end{lemma}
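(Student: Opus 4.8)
The plan is to replace the possibly discontinuous $f$ by its continuous majorant and then run a standard continuity (bootstrap) argument. First I would set
\[
g(t) := a + b\int_0^t \big( f(s) + f(s)^m \big)\,ds ,
\]
which is finite on $[0,T)$ since $f \in L^\infty_\loc$, is non-decreasing because the integrand is non-negative, is continuous (indeed locally Lipschitz), and satisfies $g(0)=a$. The hypothesis is exactly $f(t)\le g(t)$ for $t\in(0,T)$, so it is enough to prove the bound $g(t)\le 2a$ on $(0,T_0)$; then $f\le g\le 2a$ follows at once. The point of passing to $g$ is that continuity is now automatic from the integral, so I never need continuity of $f$ itself — this is precisely the gap the lemma is designed to close, since for data in $L^2_\uloc\setminus E^2$ the continuity of $A(\la)$ used in \cite{JiaSverak-minimal} is unavailable.

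Next I would fix $T_1$ via \eqref{T1.def} as the threshold time at which feeding the trial bound $f\le 2a$ back through the integral just returns $2a$, namely the solution of $b\,T_1\,(2a+(2a)^m)=a$, i.e. $T_1=a/\big(b(2a+(2a)^m)\big)$. Arguing by contradiction, suppose $g(t_0)>2a$ for some $t_0\in(0,T_0)$. Since $g$ is continuous and $g(0)=a<2a$, the time $t^*:=\inf\{\,t\in(0,T_0): g(t)>2a\,\}$ obeys $0<t^*\le t_0<T_0$ and, by continuity, $g(t^*)=2a$. On $[0,t^*]$ we have $g\le 2a$, hence $f\le 2a$, hence $f+f^m\le 2a+(2a)^m$ a.e., and plugging this into the definition of $g$ gives
\[
g(t^*)\le a + b\,t^*\,\big(2a+(2a)^m\big).
\]
Because $t^*<T_0\le T_1$ the second term is strictly less than $b\,T_1\,(2a+(2a)^m)=a$, so $g(t^*)<2a$, contradicting $g(t^*)=2a$. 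Hence $g\le 2a$ throughout $(0,T_0)$, which is the claim.

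I expect the only delicate points to be bookkeeping rather than analysis: one must make sure the contradiction closes with a \emph{strict} inequality, which forces $T_1$ in \eqref{T1.def} to be the level where the returned value equals $2a$ (so that for $t^*<T_1$ it is strictly below $2a$), and one must handle both cases $T_0=T_1$ and $T_0=T<T_1$ uniformly — in each case $t^*<T_0\le T_1$ yields the needed strict gap. No smallness or structural assumption beyond $m\ge 1$ and $a,b>0$ enters, and the super-linear term $f^m$ is harmless because on the interval in question it is controlled by the constant $(2a)^m$.
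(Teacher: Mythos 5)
Your proof is correct, and it takes a genuinely different --- and in fact more economical --- route than the paper's. The paper first replaces $f$ by its monotone envelope $\tilde f(t)=\esssup_{s<t}f(s)$, then compares it with the $C^1$ solution $g$ of the auxiliary integral equation $g(t)=\frac54 a+b\int_0^t (g+g^m)\,ds$, and derives a contradiction at the supremum $t_2$ of the times up to which $f\le g$, using a quantitatively chosen nearby point $t_4$ with $(t_4-t_2)\,b(M+M^m)\le \frac a8$. You instead take $g$ to be the integral majorant $a+b\int_0^t(f+f^m)\,ds$ of $f$ itself, which is automatically continuous and nondecreasing, and run a first-crossing argument on $g$: at the first time $t^*$ where $g$ reaches $2a$, feeding $f\le g\le 2a$ back through the integral (using that $x\mapsto x+x^m$ is nondecreasing on $[0,\infty)$) gives $g(t^*)\le a+b\,t^*\bigl(2a+(2a)^m\bigr)<2a$, a contradiction. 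This avoids both the monotone rearrangement and the auxiliary ODE solution (whose bound $g\le 2a$ on $[0,T_1]$ the paper asserts but would itself need a small bootstrap), and it is entirely self-contained. One discrepancy: you reconstructed \eqref{T1.def} as $b\,T_1\bigl(2a+(2a)^m\bigr)=a$, whereas the paper's \eqref{T1.def} reads $b\,T_1\bigl(2a+(2a)^m\bigr)=\frac34 a$. This is harmless: the paper's $T_1$ is smaller than yours, and your crossing argument applied with the paper's $T_1$ closes with extra room, since then $g(t^*)\le a+\frac34 a=\frac74 a<2a$, so the strictness you were careful about is not even needed there; your argument actually proves the conclusion on the slightly longer interval determined by your larger $T_1$, which implies the stated lemma.
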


Note $f$ may be discontinuous. 
\begin{proof} By replacing $f(t)$ by $\td f(t)=\esssup_{s <t } f(s)$, we may assume $f$ is nondecreasing. Let $g(t)$ be the solution of
\[
g(t) = \frac 54 a + b\int_0^t (g(s) + g(s)^m )ds, \quad 0<t<T_1.
\]
$T_1$ is such that
\EQ{\label{T1.def}
 b\int_0^{T_1}( (2a) + (2a)^m )ds = \frac 34a.
}
We have $g \in C^1$, $g(t) \le 2a$ in $[0,T_1]$, and $f(t) < g(t)$ for sufficiently small $t$. Let 
\[
t_2 = \sup\bket{t \in (0,T_0): f(s) \le g(s), \quad \forall s \in (0,t)}.
\]
We have $t_2>0$.
If $t_2=T_0$, we are done. If $t_2 \in (0,T_0)$, let $t_3=\frac12(t_2,T_0)$, $M = \norm{f}_{L^\infty(0,t_3)}$, and we can choose $t_4 \in (t_2,t_3)$ so that $(t_4-t_2 ) b(M+M^m)\le \frac a8$ and $f(t_4) > g(t_4)$ by the definition of $t_2$.
Then
\EQN{
f(t_4) & \le a + b\int_0^{t_4} (f(s) + f(s)^m )ds
\\ & \le   a + b\int_0^{t_2} (g(s) + g(s)^m )ds 
+ b\int_{t_2}^{t_4} (M+M^m)ds
\\ & \le g(t_2) - \frac a4 + \frac a8,
}
which is a contradiction.
\end{proof}

\begin{proof}[Proof of Lemma \ref{lemma.JiaSve}]
We use essentially the same estimates as  in the \cite[Proof of Lemma 2.2]{JiaSverak-minimal}.  By H\"older and Young inequalities, for any $\de>0$,
\[
\norm{u}_{L^3L^3}^3\lec \norm{u}_{L^6L^2}^{3/2}\norm{u}_{L^2L^6}^{3/2}\lec (\de R)^{-3}\norm{u}_{L^6L^2}^{6}+ \de R\norm{u}_{L^2L^6}^{2}.
\]
Thus, also by Sobolev inequality,
\EQ{
& \frac 1 {R} \int_0^{\si R^2} \int_{B_{2R}(x_0)} |u|^3 \,dx\,dt
\\&\leq  \frac C {\de^3 R^4} \int_0^{\si R^2}\bigg( \int_{B_{2R}(x_0)} |u|^2 \,dx\bigg)^{3}\,dt + \frac {C \de}{R^{2}} \int_0^{\si R^2}  \int_{B_{2R}(x_0)} |u|^2 \,dx\,dt 
\\&\quad + C\de \sup_{x_0\in \R^3} \int_0^{\si R^2} \int_{B_{2R(x_0)}} |\nb u|^2\,dx\,dt,
}
with $C$ independent of $\si$.
For the pressure, using \eqref{pressure.dec} we have   
\EQ{
& \frac 1 R \int_0^{\si R^2} \int_{B_{2R}(x_0)} |p - c _{x_0,R}(t)|^{3/2}\,dx\,dt 
\\&\leq \frac C R  \int_0^{\si R^2} \int_{B_{4R}(x_0)} |u|^3 \,dx\,dt +\int_0^{\si R^2}   \frac C {R^{4}} \bar A(\si)^{3/2}  \,dt,
}
where 
\EQ{
\bar A(\si) = \esssup_{0\leq t\leq \si R^2} \sup_{x_0\in \R^3} \int_{\R^3} \frac {|u|^2} 2 \phi(x-x_0)\,dx.
}
Now, adopting the same terminology as in \cite[Proof of Lemma 2.2]{JiaSverak-minimal} and working from the local energy inequality we obtain
\EQ{
&\int_{\R^3} \frac {|u|^2} 2 \phi(x-x_0) \,dx + \int_0^t \int_{\R^3} |\nb u|^2 \phi(x-x_0)\,dx\,ds 
\\&\leq \al + C \frac 1 {R^2}  \int_0^{\si R^2} \bar A(\si) \,ds+C  \frac 1 {R^4}    \int_0^{\si R^2} \bar A(\si)^3\,ds,
}
where we  chose sufficiently small $\de$,  
defined $\al$ as in \cite{JiaSverak-minimal}, 
and handled the linear term in the obvious way.  Hence
\EQ{
& \frac {\bar A (\si)} R \leq \frac \al R + \frac C {R^2}  \int_0^{\si R^2} \frac { \bar A(\si)} {R} \,ds+ \frac C {R^2 }   \int_0^{\si R^2}\bigg(\frac {\bar A(\si)} R   \bigg)^3.
}
We now use Lemma \ref{lemma.gronwall} to obtain
\[
\bar A (\si) \leq 2 \al,
\]
for $t\in [0,T_R]$ where $T_R = \si R^2$ and 
\[
\si = c_0\min\{(N_R^0)^{-2},1\}
\]
for an appropriately chosen small constant $c_0$ that is independent of $R$ and $u_0$.  This constant is chosen so that 
\[
c_0\min\{(N_R^0)^{-2},1\} \sim \frac C {2+ \al^2/R^2}.
\]
The remaining conclusions follow as in \cite{JiaSverak-minimal}.  
\end{proof}

We will use the following $\e$-regularity criteria which is motivated by \cite{CKN}. The current revised form is due to \cite{L98}; see also \cite{LS99} for details.
  
\begin{lemma}[$\e$-regularity criteria]\label{thrm.epsilonreg}  
There exists a universal small constant $\e_*>0$ such that, if the pair $(u,p)$ is a suitable weak solutions of \eqref{eq.NSE} in  $Q_r=Q_r(x_0,t_0)=B_r(x_0)\times (t_0-r^2,t_0)$, $B_r(x_0)\subset \R^3$, and    
\[
{\e^3=}\frac 1 {r^2} \int_{Q_r} (|u|^3 +|p|^{3/2})\,dx\,dt <\e_*,
\]
then $u\in L^\I(Q_{r/2})$.
Moreover,
\[
\|  \nabla^k u\|_{L^\I(Q_{r/2})} \leq C_k {\e}\, r^{-k-1},
\]
for universal constants $C_k$ where $k\in \N_0$.
\end{lemma}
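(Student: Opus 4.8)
The plan is to treat this as a scale-invariant statement and prove it by a compactness (blow-up) argument in the style of Lin \cite{L98}, which is the cleanest route to the linear-in-$\e$ derivative bounds. First I would reduce to the unit scale: since the dimensionless quantity $r^{-2}\int_{Q_r}(|u|^3+|p|^{3/2})$, the equations \eqref{eq.NSE}, and the local energy inequality \eqref{CKN-LEI} are all invariant under the Navier-Stokes rescaling $u^\la(x,t)=\la u(\la x,\la^2 t)$, $p^\la(x,t)=\la^2 p(\la x,\la^2 t)$, it suffices to prove the statement on $Q_1=B_1\times(-1,0)$, centered at $(x_0,t_0)=(0,0)$, and then undo the scaling.

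The core is a single decay estimate: there exist universal $\e_*>0$ and $\th\in(0,1/2)$ such that, for a suitable weak solution on $Q_1$ with $\int_{Q_1}(|u|^3+|p|^{3/2})<\e_*$, the excess decays,
\[
\frac 1 {\th^2}\int_{Q_\th}\bke{|u|^3+|p-\bar p_\th|^{3/2}} \le \tfrac 12 \int_{Q_1}\bke{|u|^3+|p|^{3/2}},
\]
where $\bar p_\th$ is the spatial average of $p$ over $B_\th$. I would prove this by contradiction and compactness: were it to fail for every such $\th$, one would obtain suitable weak solutions $(u_k,p_k)$ on $Q_1$ with excess $\e_k^3:=\int_{Q_1}(|u_k|^3+|p_k|^{3/2})\to 0$ that violate the decay at each fixed scale. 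Rescaling by the amplitude, $v_k:=u_k/\e_k$ and $q_k:=p_k/\e_k^2$ (the pressure being quadratic in $u$), the velocity equation becomes $\pd_t v_k-\De v_k+\e_k\,v_k\cdot\nb v_k+\e_k\nb q_k=0$. The local energy inequality \eqref{CKN-LEI} gives uniform $L^\I_t L^2_x\cap L^2_t\dot H^1_x$ bounds on $v_k$, so Aubin-Lions yields strong $L^3$ convergence $v_k\to v$; since $\e_k\to 0$, the convective and pressure-gradient terms drop out and $v$ solves the heat equation with $\div v=0$. Caloric fields obey precisely the decay the $v_k$ were assumed to violate, and the pressure part of the decay is recovered from the Poisson equation $-\De q_k=\pd_i\pd_j(v_{k,i}v_{k,j})$ via elliptic estimates -- the contradiction.

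With the decay estimate in hand, iterating over the scales $\th^m$ gives geometric decay $\th^{-2m}\int_{Q_{\th^m}}(|u|^3+|p-\bar p_{\th^m}|^{3/2})\lec\mu^m$ for some $\mu\in(0,1)$; this is a parabolic Morrey-Campanato condition that upgrades to $u\in L^\I(Q_{1/2})$ with $\norm{u}_{L^\I(Q_{1/2})}\lec\e$. Once $u$ is locally bounded, a standard bootstrap -- treating $u\cdot\nb u$ and $\nb p$ as forcing in the heat equation, recovering $p$ locally from $u$ through the Calderón-Zygmund theory for $-\De p=\pd_i\pd_j(u_iu_j)$, and applying interior parabolic estimates on shrinking cylinders -- yields $\norm{\nb^k u}_{L^\I(Q_{1/2})}\lec_k\e$. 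Rescaling back to $Q_r$ restores the claimed powers $r^{-k-1}$.

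The main obstacle is the pressure, a nonlocal quantity tied to $u$ on all of $\R^3$, whereas the hypothesis controls it only on $Q_r$. In the compactness step one must split $p$ inside a sub-cylinder into a near-field Calderón-Zygmund part, driven by $u\otimes u$ cut off to a slightly larger ball and bounded in $L^{3/2}$, plus a far-field remainder that is harmonic in space and hence enjoys interior estimates; keeping both pieces uniformly controlled and identifying the limiting pressure relation is the technical crux. A secondary point is that \eqref{CKN-LEI} must pass to the weak limit with the correct sign, which uses the strong $L^3$ convergence of $v_k$ together with weak lower semicontinuity of the dissipation. Since the lemma is classical, a fully acceptable alternative is to cite \cite{CKN,L98,LS99} directly.
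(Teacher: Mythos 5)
The paper itself gives no proof of this lemma: it is quoted as a known result, attributed to \cite{CKN} in the revised form of \cite{L98}, with details in \cite{LS99} (see also \cite[Chap.~6]{Tsai-book}). So your sketch is necessarily a ``different route'' from the paper, and the right comparison is against the classical proofs you are reconstructing. Your overall architecture (scaling reduction, blow-up/compactness, Campanato iteration, bootstrap) is indeed Lin's scheme, and the scaling step, the Aubin--Lions step, and the final bootstrap are fine. But the central one-step decay estimate, \emph{as you formulated it}, cannot be proved by the compactness argument you describe; this is a genuine gap, and it sits exactly at the point you call the technical crux.

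The problem is the pressure term in your claimed estimate
\[
\frac 1{\th^2}\int_{Q_\th}\bke{|u|^3+|p-\bar p_\th|^{3/2}} \le \frac 12 \int_{Q_1}\bke{|u|^3+|p|^{3/2}}.
\]
In the blow-up, $q_k=p_k/\e_k^2$ satisfies $-\De q_k=\pd_i\pd_j(v_{k,i}v_{k,j})$ with \emph{no} factor of $\e_k$, so the pressure never linearizes away. Split $q_k=q_k^1+q_k^2$ in $B_{1/2}$, with $\De q_k^1=-\pd_i\pd_j(v_{k,i}v_{k,j}\chi_{B_{1/2}})$ and $q_k^2$ harmonic in $B_{1/2}$. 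The harmonic piece is fine: interior estimates give $\th^{-2}\int_{Q_\th}|q_k^2-\bar q^2_{k,\th}|^{3/2}\lec \th^{5/2}$. But for the Calder\'on--Zygmund piece, the elliptic estimate only yields, for a.e.\ $t$,
\[
\int_{B_\th}|q_k^1(t)|^{3/2}\,dx \le \int_{\R^3}|q_k^1(t)|^{3/2}\,dx \lec \int_{B_{1/2}}|v_k(t)|^3\,dx,
\]
where the spatial domain on the right does \emph{not} shrink with $\th$. Integrating over $t\in(-\th^2,0)$ and dividing by $\th^2$, then using the strong $L^3$ convergence $v_k\to v$ (caloric, with $\sup_{Q_{1/2}}|v|\lec \norm{v}_{L^3(Q_{3/4})}\le 1$), one only gets
\[
\limsup_{k\to\I}\ \frac 1{\th^2}\int_{Q_\th}|q_k^1|^{3/2}\,dx\,dt \lec \sup_{Q_{1/2}}|v|^3 \le C_0,
\]
a universal constant that is \emph{not small in $\th$} and has no reason to be $\le 1/2$. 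So the contradiction you are aiming for never materializes: the velocity part decays like $\th^3$ and the harmonic pressure like $\th^{5/2}$, but the near-field pressure term stalls at $O(1)$. This is not a removable technicality of the write-up: for a merely suitable solution one cannot rule out that $\int_{B_{1/2}}|u(t)|^3\,dx$ concentrates near $t=0$, which is precisely what this term would need to exclude, and excluding it essentially amounts to the regularity one is trying to prove.

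The standard repair --- and this is what \cite{L98,LS99} and \cite[Chap.~6]{Tsai-book} actually do --- is structural: prove the one-step decay by compactness for the \emph{velocity only} (the pressure enters the hypothesis, through the local energy inequality, but not the conclusion), and control the pressure oscillation separately through a coupled iteration in which $u\otimes u$ is cut off at the ball of the \emph{current} scale; then the near-field pressure at scale $\th^{m+1}$ is bounded by $C_\th$ times the velocity excess at scale $\th^m$, and the velocity's geometric decay rate $\th^3$ beats the loss $\th^{-2}$, while the harmonic piece contracts on its own. (Equivalently, one runs the CKN-type scheme on the dimensionless quantities $A,C,D,E$.) With that reformulation the rest of your outline goes through, and the linear-in-$\e$ derivative bounds follow from the bootstrap as you describe. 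Finally, as you note yourself, simply citing \cite{CKN,L98,LS99} --- which is what the paper does --- is a fully acceptable resolution.
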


 \begin{proof}[Proof of Theorem \ref{thrm.littleo}]  Assume there exists $R_0>0$ so that for all $R\geq R_0$, $N^0_R<\e_1$.
 We will give $\e_1\in(0,1)$ a precise value later in the proof.

Fix $x_0\in \R^3$ and $R>R_0$.  Let $\tilde p(x,t)= p(x,t)-c_{x_0,R}(t)$ where $c_{x_0,R}(t)$ is the function of $t$ from formula \eqref{pressure.dec}.  Then $u$ is a suitable weak solution to the Navier-Stokes equations with associated pressure $\tilde p$.  
By \eqref{ineq.apriorilocal2}, we have
\begin{align*}
\int_0^{\si(R) R^2} \int_{B_{ R}(x_0)} (|u|^3 +|\tilde p|^{3/2})\,dx\,dt &\leq C {(N^0_R)}^{3/2}  R^{2}.
\end{align*}
By  \eqref{def.sigma} and  $N^0_R<\e_1<1$, $\si(R)=c_0<1$.
Dividing by $c_0R^2$,
\[
\frac 1 {c_0R^2} \int_0^{c_0 R^2} \int_{B_{c_0^{1/2} R}(x_0)} (|u|^3 +|\tilde p|^{3/2})\,dx\,dt  \leq  \frac {C (N^0_R)^{3/2}} {c_0} \leq \frac {C \e_1^{3/2}} {c_0}.
\]
Thus, provided $R\geq R_0$  and $\e_1 \leq (c_0C^{-1} \e_* )^{2/3}$, the right side is bounded by $\e_*$ and we have by Lemma \ref{thrm.epsilonreg} that 
\[
u\in L^\I(Q), \quad Q=B_{c_0^{1/2}R/2}(x_0)\times [3c_0R^2/4, c_0R^2],
\]
and for $(x,t) \in Q$,
\EQ{\label{eq2.4}
|u(x,t)| \le C_0 (\frac C{c_0}(N_R^0)^{3/2})^{1/3} (c_0^{1/2}R/2)^{-1} \le C (N_R^0)^{1/2} t^{-1/2}.
}
Thus $u$ is regular in $\R^3 \times (3c_0R^2/4, c_0R^2]$.
Since $R\ge R_0$ is arbitrary, $u$ is regular at $(x,t)$ for any $x \in \R^3$ and $t> 3c_0R_0^2/4$, with the bound \eqref{eq2.4}. Note that $3c_0R_0^2/4$ is determined by $u_0$ and is the same for all $u \in \cN (u_0)$.

The proof is similar when $\sup_{R\leq R_0}N^0_R<\e_1$ and we omit the details.

Finally, assume $N^0_R<\e_1$ for all $R>0$. 
Then $\si(R)= c_0$ for all $R>0$, and $u$ is regular with the bound \eqref{eq2.4} in 
\[
\bigcup_{0<R<\I} \R^3  \times (3c_0R^2/4, c_0R^2] = \R^3  \times (0,\I).\qedhere
\]
\end{proof}

\section{Global existence}\label{sec.existence} 

In this section we prove Theorem \ref{thrm.existence}.  We will first construct solutions to a regularized system in subsection \ref{sec.existence.1}, and then take limits in subsection \ref{sec.existence.2}. As in \cite{LR-Morrey}, the solution will be constructed for $0<t<\infty$ in one step, and there is no need of an extension argument as in \cite{LR,KiSe,KwTs}.

\subsection{Global existence for a regularized system}\label{sec.existence.1}

 The goal of this subsection is to construct global in time solutions to the regularized system 
\EQ{\label{eq.NSEreg}
&\partial_t u^\e - \Delta u^\e +  (  \mathcal J_\e(u^\e) \cdot \nb  )(u^\e \Phi_\e) +\nb p^\e =0
\\& \div u^\e =0,   
}
when $u_0$ satisfies \eqref{condition.forexistence}, $\mathcal J_\e f= \eta_\e *f$ for a mollifier $\eta_\e$ and $\Phi_\e(x)=\Phi(\e x)$ for a fixed radially decreasing cutoff function $\Phi$ that equals $1$ on $B_1(0)$ and $\supp \Phi\subset B_{3/2}(0)$.  
This system was studied in \cite[Section 3]{KwTs} and we recall and combine \cite[Lemmas 3.3 and 3.4]{KwTs} in the following lemma.   
\begin{lemma} \label{lemma.KwTs}
Let $u_0\in L^2_\uloc$ with $\div u_0=0$ and $\|u_0\|_{L^2_\uloc}\leq M$, and fix $\e\in (0,1)$.  If 
\[
0<T<T_\e:=\min(1, c\e^3 M^{-2}),
\]
then there exists a unique solution $u=u^\e$ to the integral form of \eqref{eq.NSEreg}
\EQ{
u(t)=e^{t\Delta}u_0 - \int_0^t e^{(t-s)\Delta} \mathbb P \nb \cdot (\mathcal J_\e (u) \otimes u \Phi_\e )(s)\,ds,
} 
satisfying 
\[
\esssup_{0<t<T}\sup_{x_0\in \R^3}\int_{B_1(x_0)} |u(x,t)|^2\,dx  +  \sup_{x_0\in \R^3}\int_0^T \!\int_{B_1(x_0)} |\nb u(x,t)|^2\,dx\,dt  \leq  C M^2,
\]
and $\lim_{t\to 0^+} \|u^\e(t)-u_0\|_{L^2(K)}=0$ for any compact subset $K$ of $\R^3$. 
Additionally, for $p^\e = (-\De)^{-1}\pd_i\pd_j (\mathcal J_\e (u) \otimes u \Phi_\e )$, we have
 $p^\e \in  L^{\I}(0,T;L^2(\R^3))$ and $u^\e$ and $p^\e$ solve \eqref{eq.NSEreg} in the sense of distributions.
\end{lemma}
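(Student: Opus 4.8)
The plan is to solve the integral equation by a contraction mapping argument in $X_T=L^\infty(0,T;L^2_\uloc)$ with norm $\|u\|_{X_T}=\esssup_{0<t<T}\|u(t)\|_{L^2_\uloc}$, and then to extract the remaining conclusions from the fixed point. Setting
\[
\Psi(u)(t)=e^{t\De}u_0-\int_0^t e^{(t-s)\De}\,\mathbb P\,\nb\cdot\bke{\mathcal J_\e(u)\otimes u\,\Phi_\e}(s)\,ds,
\]
I would show that $\Psi$ maps the ball $B_M=\{u\in X_T:\|u\|_{X_T}\le 2C_0M\}$ into itself and contracts it whenever $T<T_\e$, where $C_0$ is the constant in the uniform boundedness of the heat semigroup on $L^2_\uloc$, $\|e^{t\De}u_0\|_{L^2_\uloc}\le C_0\|u_0\|_{L^2_\uloc}\le C_0M$ for $0<t\le1$.

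The nonlinear estimate rests on two ingredients. First, the mollifier promotes the transport field to $L^\I$ at the price of a power of $\e$: since $\eta_\e$ has $L^2$ norm $\sim\e^{-3/2}$ and is essentially supported in a ball of radius $\e<1$, Cauchy--Schwarz gives $\|\mathcal J_\e(u)\|_{L^\I}\lec\e^{-3/2}\|u\|_{L^2_\uloc}$, whence $\|\mathcal J_\e(u)\otimes u\,\Phi_\e\|_{L^2_\uloc}\lec\e^{-3/2}\|u\|_{L^2_\uloc}^2$. Second, the Oseen operator $e^{\tau\De}\mathbb P\nb\cdot$ gains half a derivative, $\|e^{\tau\De}\mathbb P\nb\cdot F\|_{L^2_\uloc}\lec\tau^{-1/2}\|F\|_{L^2_\uloc}$ for $0<\tau\le1$. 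Combining these and using $\int_0^t(t-s)^{-1/2}\,ds=2t^{1/2}$ yields
\[
\|\Psi(u)(t)-e^{t\De}u_0\|_{L^2_\uloc}\lec T^{1/2}\e^{-3/2}\|u\|_{X_T}^2.
\]
Because $F(u)=\mathcal J_\e(u)\otimes u\,\Phi_\e$ is bilinear, the identical computation controls $\|\Psi(u)-\Psi(v)\|_{X_T}$ by $CT^{1/2}\e^{-3/2}(\|u\|_{X_T}+\|v\|_{X_T})\|u-v\|_{X_T}$. On $B_M$ both the self-mapping and contraction requirements reduce to $T^{1/2}\e^{-3/2}M\lec1$, i.e. $T\lec\e^3M^{-2}$; combined with $T\le1$ this is exactly $T<T_\e$. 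The contraction mapping principle then produces the unique $u=u^\e\in X_T$.

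The remaining properties follow from the fixed point. Since $\mathcal J_\e(u)$ is smooth, bounded, and divergence free, parabolic regularity makes $u^\e$ smooth in space, so the local energy identity holds rigorously; testing against $u\phi^2$ for a cutoff $\phi$ adapted to $B_1(x_0)$ and using the $L^\I$ bound on $\mathcal J_\e(u)$ (the derivatives of $\Phi_\e$ contribute only $\e$-small lower order terms) gives $\esssup_{0<t<T}\|u(t)\|_{L^2(B_1(x_0))}^2+\int_0^T\!\int_{B_1(x_0)}|\nb u|^2\lec M^2$ uniformly in $x_0$, the smallness $T<T_\e$ keeping the nonlinear contribution subordinate. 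Attainment of the data, $\|u^\e(t)-u_0\|_{L^2(K)}\to0$, follows from the standard convergence $e^{t\De}u_0\to u_0$ in $L^2_\loc$ together with the vanishing of the Duhamel term as $t\to0^+$ supplied by the $T^{1/2}$ factor. For the pressure the decisive feature is the compact support of $\Phi_\e$: the tensor $\mathcal J_\e(u)\otimes u\,\Phi_\e$ then lies in $L^2(\R^3)$ globally with $\|\,\cdot\,\|_{L^2}\lec\e^{-3}\|u\|_{L^2_\uloc}^2$ uniformly in $t$, so the Calder\'on--Zygmund operator $(-\De)^{-1}\pd_i\pd_j$, bounded on $L^2$, places $p^\e$ in $L^\I(0,T;L^2(\R^3))$. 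Equivalence of the mild and distributional formulations is standard and shows $(u^\e,p^\e)$ solves \eqref{eq.NSEreg} distributionally.

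The one genuinely delicate point is the $L^2_\uloc$ bound for the non-local operator $e^{\tau\De}\mathbb P\nb\cdot$. In the global $L^2$ theory $\mathbb P$ is a bounded Fourier multiplier, but on $L^2_\uloc$ one must cope simultaneously with the nonlocality of the Leray projection and the slow $(|x|+\sqrt\tau)^{-4}$ decay of the associated Oseen kernel; the bound is obtained by decomposing the source over unit cubes and summing the contributions, the tails being summable in the uniformly-local sense. This is exactly the estimate furnished by the cited lemmas of Kwon--Tsai, and everything above is bookkeeping built on top of it.
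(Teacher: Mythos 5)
Your proposal cannot be matched line-by-line against the paper, because the paper does not actually prove Lemma \ref{lemma.KwTs}: it imports it wholesale, stating only that ``the proof is contained in \cite{KwTs}'' (Lemmas 3.3 and 3.4 there). What you have written is essentially a reconstruction of that outsourced proof: a bilinear fixed point in $L^\infty(0,T;L^2_\uloc)$ built on the uniformly-local heat and Oseen estimates of Maekawa--Terasawa (the same estimates the paper itself later records as \eqref{MTa}--\eqref{MTb} in the proof of Theorem \ref{thrm.uniquenessb}), with the mollifier supplying $\|\mathcal J_\e(u)\|_{L^\infty}\lec\e^{-3/2}\|u\|_{L^2_\uloc}$ and the compact support of $\Phi_\e$ supplying global square-integrability of the stress tensor, hence $p^\e\in L^\I(0,T;L^2)$. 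The contraction condition you derive, $T^{1/2}\e^{-3/2}M\lec 1$, reproduces exactly the stated time scale $T_\e=\min(1,c\e^3M^{-2})$, which is a good consistency check; the architecture is sound and is, as far as one can tell, the same as in the cited reference.

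There is, however, one quantitative step where your bookkeeping does not close as written: the pressure term in the local energy estimate. If you insert your global bound $\|p^\e(t)\|_{L^2(\R^3)}\lec\e^{-3}M^2$ into $\int_0^T\!\!\int p^\e\, u\cdot\nb(\phi^2)\,dx\,dt$, the contribution has size $T\e^{-3}M^3$, which is \emph{not} $\lec M^2$ uniformly over the admissible range of parameters: take $M=\e^2$, for which $T_\e=\min(1,c/\e)=1$, so $T$ may be close to $1$ and the term is of size $\e^3$ while $M^2=\e^4$. The remedy is standard but must be stated: since $\int u\cdot\nb(\phi^2)\,dx=0$, one may subtract from $p^\e$ an arbitrary function $c_{x_0}(t)$ of time, and the local decomposition of $p^\e$ on $B_2(x_0)$ --- Calder\'on--Zygmund applied to the near-field part $(-\De)^{-1}\pd_i\pd_j(F\chi_{B_4(x_0)})$, plus the far-field kernel-difference integral whose lattice sum $\sum_k |x_k-x_0|^{-4}$ converges --- gives $\|p^\e-c_{x_0}(t)\|_{L^2(B_2(x_0))}\lec\e^{-3/2}M^2$, matching the size of the nonlinear term. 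Both contributions are then $\lec T\e^{-3/2}M^3$, and this is $\lec M^2$ on $(0,T_\e)$ after a two-case check: if $M\ge\e^{3/2}$ use $T\le c\e^3M^{-2}$, while if $M<\e^{3/2}$ use $T\le 1$. A last small point: the contraction principle yields uniqueness only in the ball $\|u\|_{X_T}\le 2C_0M$; uniqueness among all solutions of the integral equation in $X_T$, which is what the lemma asserts, requires the usual additional argument (a Gronwall-type estimate for $\|u-v\|$ iterated over short subintervals).
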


The proof of Lemma \ref{lemma.KwTs} is contained in \cite{KwTs}. We next need an estimate for the solutions described in Lemma \ref{lemma.KwTs} for all scales. Note that this is just 
Lemma \ref{lemma.JiaSve} for the regularized system. 
The function $c_{x_0,r}^\e(t)$ is similar to $c_{x_0,r}(t)$ and will appear in the pressure decomposition formula \eqref{eq3.10} for $p^\e$.

\begin{lemma}\label{lemma.est.RegNSE}
Let $u_0\in L^2_\uloc$ with $\div u_0=0$  and fix $\e\in (0,1)$.  Assume for some $T\in (0,\I]$ that $u^\e$ and $p^\e$  satisfy all the conclusions of Lemma \ref{lemma.KwTs} on $\R^3\times (0,T)$. Then, for all $r>0$ we have
\begin{equation}\label{ineq.apriorilocalReg}
\esssup_{0\leq t \leq \sigma r^2\wedge T}\sup_{x_0\in \RR^3} \int_{B_r(x_0)}\frac {|u^\e|^2} 2 \,dx + \sup_{x_0\in \RR^3}\int_0^{\sigma r^2\wedge T}\!\!\int_{B_r(x_0)} |\nabla u^\e|^2\,dx\,dt <CA_0(r) ,
\end{equation}
and for some $c_{x_0,r}^\e(t) \in L^{3/2}_\loc([0,\sigma r^2\wedge T))$,
\begin{equation}\label{ineq.apriorilocalReg2}
\sup_{x_0\in \RR^3} \int_0^{\sigma r^2\wedge T}\!\!\int_{B_r(x_0) }\big( | u^\e|^3  +|p^\e-c^\e_{x_0,r}(t)|^{3/2}  \big)\,dx\,dt
 <C r^{\frac 12} A_{0}(r)^{\frac 32},
\end{equation}
where
\[
A_0(r)=rN^0_r= \sup_{x_0\in \R^3} \int_{B_r(x_0)} |u_0|^2 \,dx,
\]
and
\begin{equation*}%\label{def.sigmaReg}
\si=\sigma(r) =c_0\, \min\big\{(N^0_r)^{-2} , 1  \big\},
\end{equation*}
for a small universal constant $c_0>0$. 
\end{lemma}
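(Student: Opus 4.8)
The plan is to follow the proof of Lemma \ref{lemma.JiaSve} essentially verbatim, since the only structural differences between \eqref{eq.NSEreg} and \eqref{eq.NSE} are the mollification $\mathcal J_\e$ and the cutoff $\Phi_\e$ in the convective term, together with the corresponding change in the pressure. The two inputs that must be reestablished for the regularized system are (i) a local energy identity and (ii) a local pressure decomposition analogous to \eqref{pressure.dec}; once these are in hand, the H\"older--Young--Sobolev estimates and the Gr\"onwall argument via Lemma \ref{lemma.gronwall} transfer unchanged and yield \eqref{ineq.apriorilocalReg}--\eqref{ineq.apriorilocalReg2} with constants independent of $\e$ and of $T$.

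First I would record that the mild solution $u^\e$ of Lemma \ref{lemma.KwTs} is smooth in $x$ for $t>0$ by parabolic regularity (it solves a semilinear heat equation with a smooth, spatially localized forcing), so testing \eqref{eq.NSEreg} against $u^\e\phi$ for a nonnegative cutoff $\phi$ is justified directly and produces a local energy \emph{equality} rather than the mere inequality \eqref{CKN-LEI}. Using $\div u^\e = \div \mathcal J_\e(u^\e)=0$, the nonlinear term splits as
\[
\int (\mathcal J_\e(u^\e)\cdot\nb)(u^\e\Phi_\e)\cdot u^\e\phi\,dx = -\tfrac12\int |u^\e|^2\,\Phi_\e\,(\mathcal J_\e(u^\e)\cdot\nb\phi)\,dx + \tfrac12\int |u^\e|^2\,\phi\,(\mathcal J_\e(u^\e)\cdot\nb\Phi_\e)\,dx .
\]
The first term is the exact analogue of the convective term in \eqref{CKN-LEI}, with $u$ replaced by $\mathcal J_\e(u^\e)$ and an extra factor $\Phi_\e\le 1$; since $\eta_\e\ast(\cdot)$ is a contraction on every $L^p$ and spreads supports by at most $\e<1$, it is dominated by the same cubic quantity $\int_{B_{2R}(x_0)}|u^\e|^3$ handled in the original proof. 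The second term carries the factor $\nb\Phi_\e = \e\,(\nb\Phi)(\e\,\cdot)$, which is supported on $\{|x|\sim 1/\e\}$ and satisfies $|\nb\Phi_\e|\le \|\nb\Phi\|_\I$; bounding it by $\int_{B_{2R}(x_0)}|u^\e|^2|\mathcal J_\e(u^\e)|\lec \|u^\e\|_{L^3(B_{2R+1}(x_0))}^3$ shows it too is controlled by the same cubic term, so it only improves the estimate.

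Next I would establish the pressure decomposition. Since $p^\e = (-\De)^{-1}\pd_i\pd_j(\mathcal J_\e(u^\e)\otimes u^\e\Phi_\e)$ is the same double Riesz transform applied to the modified tensor $F_\e := \mathcal J_\e(u^\e)\otimes (u^\e\Phi_\e)$ in place of $u\otimes u$, splitting the Calder\'on--Zygmund kernel $K$ into its near part on $B_{4R}(x_0)$ and far part, and subtracting $K(x_0-y)$ from the far kernel to gain integrability, yields the exact analogue of \eqref{pressure.dec} for $p^\e$ up to a function $c^\e_{x_0,r}(t)$ of time only. The associated $L^{3/2}$ bound then reduces, via the pointwise estimate $|F_\e|\le |\mathcal J_\e(u^\e)|\,|u^\e|$ and the $L^3$-boundedness of $\mathcal J_\e$, to precisely the cubic control of $u^\e$ underlying \eqref{ineq.apriorilocal2}.

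With these two ingredients the remainder is identical to the proof of Lemma \ref{lemma.JiaSve}: one estimates $\frac1R\int_0^{\si R^2}\int_{B_{2R}(x_0)}|u^\e|^3$ by H\"older, Young, and Sobolev, bounds the pressure contribution by the same cubic term, and closes the resulting integral inequality for $\bar A(\si)$ using Lemma \ref{lemma.gronwall}, obtaining $\bar A(\si)\le 2\alpha$ on $[0,\si r^2\wedge T]$ with $\si=c_0\min\{(N^0_r)^{-2},1\}$; this is exactly \eqref{ineq.apriorilocalReg}--\eqref{ineq.apriorilocalReg2}. I expect the main obstacle to be bookkeeping the two places where $\e$-uniformity could fail---the boundary term produced by $\nb\Phi_\e$ and the support spreading caused by $\mathcal J_\e$---and verifying in each that the relevant constants do not degenerate as $\e\to 0^+$; both are resolved by the facts that $\e<1$ and that $\eta_\e\ast(\cdot)$ is non-expansive on every $L^p$.
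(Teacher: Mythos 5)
Your proposal takes the same route as the paper's own (very terse) proof: defer to the Jia--\v Sver\'ak scheme underlying Lemma \ref{lemma.JiaSve}, and check that the mollification and the cutoff only perturb the nonlinear and pressure estimates. Your integration-by-parts identity for the convective term is correct, and your treatment of the pressure (splitting the kernel of $(-\De)^{-1}\pd_i\pd_j$ applied to $\mathcal J_\e(u^\e)\otimes u^\e\Phi_\e$ into near and far parts) matches what is needed; in these respects you are in fact more explicit than the paper, whose proof only cites the boundedness of $\Phi_\e$ and the $L^p_\uloc$-boundedness of $\mathcal J_\e$ and never isolates the term you found.

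However, your handling of that term, $\tfrac12\iint |u^\e|^2\phi\,(\mathcal J_\e(u^\e)\cdot\nb\Phi_\e)$, has a genuine gap, and it is precisely the one point where something new must be checked. In the scheme at scale $r$, every nonlinear and pressure contribution enters with a factor $1/r$ (from $|\nb\phi|\lec 1/r$); after the H\"older--Young--Sobolev interpolation this inverse length is what produces the coefficients $r^{-2}$ and $r^{-4}$ in the Gr\"onwall inequality of Lemma \ref{lemma.gronwall}, and hence the timescale $\si(r)r^2$ in \eqref{def.sigma}. Your extra term carries no such factor: although you correctly write $\nb\Phi_\e=\e\,(\nb\Phi)(\e\,\cdot)$, you then bound it by $\|\nb\Phi\|_\I$, discarding the $\e$, so what must be absorbed is $\iint_{B_{2r}(x_0)}|u^\e|^3$ \emph{without} the prefactor $1/r$ --- this is $r$ times larger than ``the same cubic term,'' and it certainly does not ``only improve the estimate'' (it has no sign). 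Running the interpolation on it forces the Young parameter $\de\lec 1/r$ in order to absorb the gradient piece into the dissipation, which inflates the cubic Gr\"onwall coefficient from $r^{-4}$ to $O(1)$ and destroys the claimed time interval $[0,\si(r)r^2\wedge T]$ for large $r$. The repair is to keep the factor you discarded: $|\nb\Phi_\e|\le C\e$, so the term is dominated by $C\e\iint_{B_{2r}(x_0)}|u^\e|^2|\mathcal J_\e u^\e|$, which folds into the standard convective term with universal constants exactly when $\e\le 1/r$, i.e.\ $r\le 1/\e$. Thus $\e$-uniformity is not ``resolved by $\e<1$''; it is a relation between $\e$ and the scale $r$. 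This restricted regime is what the limiting argument for Theorem \ref{thrm.existence} actually uses (there the scale $r=n$ is fixed while $\e=\e_{n,k}\to 0$, so eventually $r\le 1/\e$), and the paper's own proof is silent about the regime $r\gg 1/\e$ as well; but as written, your justification asserts a quantitatively false domination, and you should restate it with the factor $\e$ retained and the scale restriction under which your constants are genuinely universal.
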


\begin{proof}
The proof is nearly identical to \cite[Proof of Lemma 2.2]{JiaSverak-minimal} and \cite[Appendix]{KMT}, the only difference being the estimates for the pressure and nonlinear terms.  These do not complicate things. Indeed, note that
\[
\|\Phi_\e \|_{L^\I}\leq C_\e,
\]
and
\[
\| \mathcal J_\e u \|_{L^p_\uloc}\leq C_\e\|  u \|_{L^p_\uloc},
\]
for $1\leq p <\I$. Using these facts, we obtain \cite[(2.8)]{JiaSverak-minimal}. To avoid redundancy, we omit further details.
\end{proof}

We next show global existence for the regularized system \eqref{eq.NSEreg} under the additional assumption 
\eqref{condition.forexistence}.

\begin{lemma}[Global existence for the regularized problem]\label{lemma.regularizedNSEexistence}
Assume $u_0\in L^2_\uloc$ satisfies \eqref{condition.forexistence} and is divergence free.  Then, there exists a solution $u^\e:\R^3\times (0,\I)\to \R^3$ to \eqref{eq.NSEreg} satisfying the a priori bounds in Lemma \ref{lemma.est.RegNSE} with $T = \I$.
\end{lemma}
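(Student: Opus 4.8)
The plan is to promote the local-in-time solution furnished by Lemma~\ref{lemma.KwTs} to a global one through a continuation argument, using the a priori estimate of Lemma~\ref{lemma.est.RegNSE} at larger and larger scales $r$ to keep the $L^2_\uloc$ norm bounded. The crucial point is that the lifespan $\sigma(r)r^2$ on which the scale-$r$ bound holds tends to $\I$. Indeed, since $\min\{(N^0_r)^{-2},1\}=(\max(N^0_r,1))^{-2}$, we may write
\[
\sigma(r)\,r^2 = c_0\Big(\frac{r}{\max(N^0_r,1)}\Big)^2,
\]
and hypothesis \eqref{condition.forexistence} is exactly the statement that $N^0_r/r\to 0$ as $r\to\I$; hence $r/\max(N^0_r,1)\to\I$ and therefore $\sigma(r)r^2\to\I$. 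This is where \eqref{condition.forexistence} enters, and it replaces the usual smallness-at-infinity/splitting mechanism: no decay of $u_0$ at spatial infinity is needed.

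First I would fix $\e\in(0,1)$ and let $T^\ast$ denote the supremum of all $T$ for which there is a solution of \eqref{eq.NSEreg} on $\R^3\times(0,T)$ satisfying the conclusions of Lemma~\ref{lemma.KwTs}; that lemma guarantees $T^\ast>0$. I claim $T^\ast=\I$, and argue by contradiction, assuming $T^\ast<\I$. Applying Lemma~\ref{lemma.est.RegNSE} to the solution on $(0,T^\ast)$ (with its ``$T$'' taken to be $T^\ast$), the bound \eqref{ineq.apriorilocalReg} together with $\int_{B_1}|u^\e|^2\le\int_{B_r}|u^\e|^2$ for $r\ge 1$ gives
\[
\esssup_{0\le t<\sigma(r)r^2\wedge T^\ast} \norm{u^\e(t)}_{L^2_\uloc}^2 \le C\,r\,N^0_r, \qquad r\ge 1.
\]
By the observation above I may fix $r=r_\ast$ so large that $\sigma(r_\ast)r_\ast^2>T^\ast$; then the estimate holds on all of $(0,T^\ast)$, so that $\esssup_{0\le t<T^\ast}\norm{u^\e(t)}_{L^2_\uloc}\le M_\ast$ with $M_\ast^2=C\,r_\ast N^0_{r_\ast}<\I$.

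Next I would run the continuation. Set $\tau=\tfrac12\min(1,c\e^3 M_\ast^{-2})$ and pick $t_0\in(T^\ast-\tau,T^\ast)$. Since $\norm{u^\e(t_0)}_{L^2_\uloc}\le M_\ast$ and $\div u^\e(t_0)=0$, Lemma~\ref{lemma.KwTs} applied with initial data $u^\e(t_0)$ produces a solution on $[t_0,\,t_0+\min(1,c\e^3M_\ast^{-2}))$, an interval reaching strictly beyond $T^\ast$. The existing solution restricted to $(t_0,T^\ast)$ satisfies the mild formulation from time $t_0$, namely $u^\e(t)=e^{(t-t_0)\Delta}u^\e(t_0)-\int_{t_0}^t e^{(t-s)\Delta}\mathbb P\nb\cdot(\mathcal J_\e(u^\e)\otimes u^\e\Phi_\e)(s)\,ds$, by the semigroup property; hence the uniqueness assertion in Lemma~\ref{lemma.KwTs} forces the two solutions to coincide on the overlap, and they glue to a solution of \eqref{eq.NSEreg} past $T^\ast$ satisfying all conclusions of Lemma~\ref{lemma.KwTs}. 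This contradicts the definition of $T^\ast$, so $T^\ast=\I$; the asserted a priori bounds with $T=\I$ are then immediate from a final application of Lemma~\ref{lemma.est.RegNSE}.

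The main obstacle is the gluing step: one must check that the restarted mild solution genuinely agrees with the original on the overlap and that the concatenated field inherits every conclusion of Lemma~\ref{lemma.KwTs} on the enlarged interval---the mild/semigroup identity, the local energy bounds, and the pressure representation feeding \eqref{ineq.apriorilocalReg2}. By contrast, the quantitative heart of the proof is the elementary identity $\sigma(r)r^2=c_0(r/\max(N^0_r,1))^2$ and its divergence under \eqref{condition.forexistence}; once this is in place, the a priori estimate does all the work, with no recourse to spatial decay of $u_0$.
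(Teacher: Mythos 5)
Your proposal is correct and takes essentially the same approach as the paper: both rest on the observation that $\sigma(r)r^2\to\infty$ under \eqref{condition.forexistence}, use Lemma \ref{lemma.est.RegNSE} to obtain an $L^2_\uloc$ bound uniform up to the current existence time, and then extend by re-solving via Lemma \ref{lemma.KwTs} from a slightly earlier time and gluing through its uniqueness assertion. The only difference is organizational — the paper runs an explicit induction over the scales $T_n=\inf_{j\ge n}\sigma(j)j^2$, extending in finitely many steps of uniform length at each stage, while you package the same continuation as a maximal-time contradiction argument.
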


\begin{proof}
We will iteratively construct a global-in-time solution.
For $n\in\N$, let 
\EQ{\label{Tn.def}
T_n = \inf_{j \ge n} \barT_j, \quad
\barT_n=\si(n)n^2,
}
where $\si$ is defined in Lemma \ref{lemma.est.RegNSE} (we are taking $r=n$).  The sequence $T_n$ is non-decreasing, $T_1>0$, and since $\barT_n\ge c_0(N_n^0/n)^{-2}$, $\lim_{n \to \infty}T_n = \infty$ by \eqref{condition.forexistence}.
 
\medskip

\emph{Step 1.} 
Let $M_1=\sqrt {CA_0(1)}$. By Lemma \ref{lemma.KwTs} with $M=M_1$, there exists a distributional solution $u^\e$ and pressure $p^\e$ to \eqref{eq.NSEreg} on $\R^3\times (0,T_\e)$ where $T_\e$ depends on $\e$ and $M_1$.  If $T_\e>T_1$ this step is over.  By Lemma \ref{lemma.est.RegNSE} with $T=T_\e$, $\|u(t_1)\|_{L^2_\uloc}< M_1$ for some $t_1\in (T_\e/2,T_\e)$. Hence, we can re-solve the regularized system \eqref{eq.NSEreg} with data $u(t_1)$ to obtain a second solution $\bar u$ on $\R^3\times (t_1 , t_1+T_\e) \subset (t_1,3T_\e/2)$.  By uniqueness in Lemma \ref{lemma.KwTs}, $u^\e=\bar u$ on $(t_1,T_\e)$. We can therefore extend $u^\e$ to $\R^3\times (0,3T_\e/2)$ by letting $u^\e = \bar u$ on $(T_\e,3T_\e/2)$.  If $3T_\e/2>T_1$ this step is done. Otherwise, note that $\|u(t_2)\|_{L^2_\uloc} <M_1$ for some $t_2\in (T_\e,3T_\e/2)$, and we can therefore repeat the extension argument to obtain a solution on a time scale extended by $T_\e/2$ units.  We can keep doing this, at each step extending the interval of existence by $T_\e/2$. Clearly, this will reach $T_1$ in finitely many steps.

\medskip

\emph{Step 2.} 
Let $M_2 =\sqrt{ C   A_0(2)}$.
If $T_2=T_1$ then we are done with this step.  Otherwise, 
we know by step 1 that a solution exists on $\R^3\times (0,T_1)$.  Let us redefine $T_\e$ to be the quantity from Lemma \ref{lemma.KwTs} with $M=M_2$
  (this is different than $T_\e$ from step 1).  By Lemma \ref{lemma.est.RegNSE}, we have $\|u(t)\|_{L^2_\uloc} \leq M_2$ for almost all $0<t<T_1$. So, there exists $t_2\in (T_1-T_\e/2,T_1)$ so that $\|u(t_2)\|_{L^2_\uloc} \leq M_2$.  Consequently, we can re-solve the regularized, localized Navier-Stokes equations using Lemma \ref{lemma.KwTs} starting at time $t_2$ to obtain a solution on $(t_2, t_2+T_\e )$. By uniqueness we can glue the new solution to the old solution to conclude that $u^\e$ and $p^\e$ are a solution on $\R^3\times (0,T_1+T_\e/2)$.  We can repeat this procedure finitely many times to obtain a solution $u^\e$ and pressure $p^\e$ on $\R^3\times (0,T_2)$.
  
\medskip

\emph{Step 3.} The procedure in Steps 1 and 2 can be iterated to obtain the following conclusion:
There exists a solution $u^\e$ and pressure $p^\e$ on $\R^3\times (0,  T_n)$ for all $n\in \N$.  Since $\{T_n\}$ is unbounded whenever \eqref{condition.forexistence} holds, $u^\e$ and $p^\e$ are a solution on $\R^3\times (0,\I)$. 
\end{proof}

\subsection{Global existence for the Navier-Stokes equations}\label{sec.existence.2} 
\begin{proof}[Proof of Theorem \ref{thrm.existence}]
Our argument mainly follows  \cite[\S3]{KwTs}, with a slight modification since our time scales must go to $\I$ (the basic elements of this argument were first written down in  \cite{LR} and later elaborated on in \cite{KiSe}).

We argue by induction.  For $\e>0$, let $u^\e$ and $\bar p^\e$ be the global-in-time solutions of the regularized system \eqref{eq.NSEreg} described in Lemma \ref{lemma.regularizedNSEexistence}.  Let $T_n$ be defined by \eqref{Tn.def}.
 Let $B_n$ denote the ball centered at the origin of radius $n$.  Then, Lemma \ref{lemma.regularizedNSEexistence} implies that $u^\e$ are uniformly bounded in the class from inequalities \cite[(4.1)-(4.4)]{KiSe} on $B_1\times [0,T_1]$.  Hence, there exists a sequence  $u^{1,k}$ (where the corresponding $\e$ are denoted by $\e_{1,k}$) that converges to a solution $u_1$ of \eqref{eq.NSE} on $B_1\times (0,T_1)$ in the following sense
\begin{align*}
&u^{1,k}\overset{\ast}\rightharpoonup u_1\quad \text{in }L^\I(0,T_1;L^2(B_1))
\\&u^{1,k}  \rightharpoonup u_1\quad \text{in }L^2(0,T_1;H^1(B_1))
\\&u^{1,k} \to u_1  \quad \text{in }L^3(0,T_1;L^3(B_1)) 
\\& \mathcal J_{\e_{1,k}}u^{1,k} \to u_1  \quad \text{in }L^3(0,T_1;L^3(B_1)).
\end{align*}

By Lemma \ref{lemma.regularizedNSEexistence}, all $u^{1,k}$ are also uniformly bounded on $B_n\times [0,T_n]$ for $n\in \N$, $n \ge2$ and, recursively, we can extract  subsequences $\{u^{n,k}\}_{k \in \N}$ from $\{u^{n-1,k}\}_{k \in \N}$ which converge to 
solution $u_n$ of \eqref{eq.NSE}  on $B_n\times (0,T_n)$ as $k \to \infty$ in the following sense
\begin{align*}
&u^{n,k} \overset{\ast}\rightharpoonup u_n\quad \text{in }L^\I(0,T_n;L^2(B_n))
\\&u^{n,k}  \rightharpoonup u_n\quad \text{in }L^2(0,T_n;H^1(B_n))
\\&u^{n,k} \to u_n \quad \text{in }L^3(0,T_n;L^3(B_n)) 
\\& \mathcal J_{\e_{{n,k}}} u^{n,k} \to u_n  \quad \text{in }L^3(0,T_n;L^3(B_n)).
\end{align*}
The difference here compared to \cite{KiSe} and \cite{KwTs} is that the time-scales depend on $n$.
Let $\tilde u_n$ be the extension by $0$ of $u_n$ to $\R^3\times (0,\I)$.
Note that, at each step, $\tilde u_n$ agrees with $\tilde u_{n-1}$ on $B_{n-1}\times (0, T_{n-1})$.  Let $u=\lim_{n\to \I}\tilde u_n$. Then, $u=u_n$ on $B_n\times (0,T_n)$ for every $n\in \N$.

Let $u^k = u^{k,k}$ on $B_k\times (0,T_k)$ and equal $0$ elsewhere. Let $\e_k$ denote the corresponding regularization parameter.  Then, for every fixed $n$ and as $k \to \infty$,
\EQ{\label{list.convergence}
&u^{k} \overset{\ast}\rightharpoonup u\quad \text{in }L^\I(0,T_n;L^2(B_n))
\\&u^{k}  \rightharpoonup u\quad \text{in }L^2(0,T_n;H^1(B_n))
\\&u^{k} \to u \quad \text{in }L^3(0,T_n;L^3(B_n)) 
\\& \mathcal J_{\e_{{k}}}u^{k} \to u \quad \text{in }L^3(0,T_n;L^3(B_n)).
}

Based on the uniform bounds for the approximates, we have that $u$ satisfies
\EQ{
&\sup_{0<t\leq T_n} \sup_{x_0\in \R^3} \int_{B_n(x_0)}|u(x,t)|^2\,dx 
\\&+ \sup_{x_0\in \R^3}\int_0^{T_n} \int_{B_n(x_0)} |\nb u(x,t)|^2\,dx\,dt \leq  C \sup_{x_0\in \R^3} \int_{B_n(x_0)} |u_0|^2\,dx.
}

To resolve the pressure, we follow \cite[\S3]{KwTs}. Let
\EQ{
p^k (x,t) =& -\frac 1 3 \mathcal J_{\e_k}(u^k  )\cdot u^k (x,t)\Phi_{\e_k} (x)+ \pv \int_{B_2} K_{ij}(x-y)  \mathcal J_{\e_k} (u^k_i  )\, u^k_j (y,t)\Phi_{\e_k}(y)\,dy  
\\&+ \pv \int_{B_2^c} (K_{ij}(x-y)-K_{ij}(-y))  \mathcal J_{\e_k}(u^k_i  )\, u^k_j (y,t)\Phi_{\e_k}(y)\,dy, 
}
which differs from the pressure $p^{\e_k}$ associated to $u^k=u^{\e_k}$ stated in Lemma \ref{lemma.KwTs} by a function of $t$ which is constant in $x$, and so $u^k$ with the above pressure $p^k$ is also a distributional solution to \eqref{eq.NSEreg} with $\e=\e_k$.

From the convergence properties of $u^k$, it follows that $p^k\to p$ in $L^{3/2}(0,T_n;L^{3/2} (B_n) )$ for all $n$ (this is \cite[(3.25)]{KwTs}) where $p$ is defined  as in \cite[(3.23)]{KwTs}, namely
\EQ{\label{p.def}
p(x,t)  = \lim_{n \to \infty} \bar p^n(x,t)
}
where $\bar p^n(x,t)$ is defined for $|x|<2^{n}$ by
\EQ{\label{p34.dec}
\bar p^n(x,t) 
=&-\frac 13 |u(x,t)|^2 
+\pv \int_{B_2}  K_{ij}(x-y) u_i u_j(y,t) dy + \bar p^n_3+\bar p^n_4,
}
with
\EQN{
\bar p^n_3(x,t) &=\pv \int_{B_{2^{n+1}}\setminus B_2}  (K_{ij}(x-y)-K_{ij}(-y)) u_iu_j(y,t)\,  dy ,
\\
\bar p^n_4(x,t) &=\int_{B_{2^{n+1}}^c}  (K_{ij}(x-y)-K_{ij}(-y)) u_iu_j(y,t) \, dy .
}
Note that $\bar p^n_4$ converges absolutely but $\bar p^n_3$ does not.
We have $\bar p^n_3,\, \bar p^n_4 \in L^{3/2}((0,T)\times B_{2^n})$ and
\[
\bar p^n_3+\bar p^n_4 = \bar p^{n+1}_3+\bar p^{n+1}_4 , \quad \text{in}\quad L^{3/2}((0,T)\times B_{2^n})
\]
Thus $\bar p^n(x,t)$ is independent of $n$ for $n> \log_2 |x|$.

Since above we followed \cite{KiSe} and \cite{KwTs}, we only established and used the local pressure expansion for scale $1$ and can only initially conclude that the local pressure expansion holds for scale $1$. We, however, need to establish this formula for all scales. The argument is actually the same but we include some details for convenience. 
Note that the local pressure expansion is valid for $p^k$ at all scales, that is, for any $T>0$, fixed $R>0$ and $x_0\in \R^3$, we have the following equality in $L^{3/2}(B_{2R}(x_0)\times (0,T))$,
\EQ{\label{eq3.10}
\hat p^k_{x_0,R}(x,t)   &:=p^k(x,t)-c_{x_0,R}^k(t)= -\Delta^{-1}\div \div [( \mathcal J_k u^k\otimes u^k \Phi_k)\chi_{4R} (x-x_0)]
\\&\quad - \int_{\R^3} (K(x-y) - K(x_0 -y)) ( \mathcal J_k u^k\otimes u^k\Phi_k)(y,t)(1-\chi_{4R}(y-x_0))\,dy,
}
where we are abusing notation by letting $\mathcal J_k=\mathcal J_{\e_k}$ and $\Phi_k = \Phi_{\e_k}$.  Similarly, let
\EQ{
 \hat p_{x_0,R}(x,t)&= -\Delta^{-1}\div \div [(  u\otimes u )\chi_{4R} (x-x_0)]
\\&\quad - \int_{\R^3} (K(x-y) - K(x_0 -y)) ( u\otimes u)(y,t)(1-\chi_{4R}(y-x_0))\,dy.
}
Fix $T>0$, $x_0\in \R^3$ and $R>0$.
Choose $n$ large enough that $B_{8R}(x_0)\times (0,T)\subset Q_n = B_n\times (0,T_n)$.  We claim that $\hat p^k_{x_0,R}(x,t)$ converges to $\hat p_{x_0,R}(x,t)$ in $L^{3/2}(B_{2R}(x_0)\times (0,T))$. 
If this is the case, by taking the limit of the weak form of \eqref{eq.NSEreg}, we can show that $(u,\hat p_{x_0,R})$ also satisfies \eqref{eq.NSE} in $B_{2R}(x_0)\times (0,T)$. Hence
$\nb p - \nb \hat p_{x_0,R}=0$, and we may define
\[
c_{x_0,R}(t):=p(x,t)- \hat p_{x_0,R}(x,t) 
\]
which is hence a function of $t$ in $L^{3/2}(0,T)$ that is independent of $x$. This gives  the desired local pressure expansion in $B_{2R}(x_0)\times (0,T)$.

To verify the claim we work term by term. Note that \cite[(3.26)]{KwTs} shows that
\[
\big\|    u_iu_j - (\mathcal J_k u_i^k) u_j^k \Phi_k \big\|_{L^{3/2} ( B_M\times [0,T]  ) } \to 0,
\] 
as $k\to \I$ for every $M>0$. For us, the same is true with $T$ replaced by $T_n$.  This implies
\EQ{
& -\Delta^{-1}\div \div [( \mathcal J_k u^k\otimes u^k \Phi_k)\chi_{4R} (x-x_0)] 
\\&\to -\Delta^{-1}\div \div [(  u\otimes u )\chi_{4R} (x-x_0)]  \text{ in } L^{3/2}(B_{2R}(x_0)\times (0,T_n)),
}
and
\EQ{
 &- \int_{|x|<M} (K(x-y) - K(x_0 -y)) ( \mathcal J_k u^k\otimes u^k\Phi_k)(y,t)(1-\chi_{4R}(y-x_0))\,dy 
\\&\to 
- \int_{|x|<M} (K(x-y) - K(x_0 -y)) ( u\otimes u)(y,t)(1-\chi_{4R}(y-x_0))\,dy,
}
in $L^{3/2}(B_{2R}(x_0)\times (0,T_n))$ for every $M>8R$.
For the far-field part, still assuming $M>8R$, we have
\EQ{
 & \bigg\| \int_{|x|\geq M} (K(x-y) - K(x_0 -y)) ( \mathcal J_k u^k\otimes u^k\Phi_k - u\otimes u)(y,t) \,dy 
 \bigg\|_{L^{3/2}(B_{2R}(x_0)\times (0,T_n))}
\\&\leq C(R,n,\|u_0\|_{L^2_\uloc}) M^{-1}.
}
This can be made arbitrarily small by taking $M$ large and noting $R$ and $n$ are fixed. Consequently, and since the other parts of the pressure converge, we conclude that $\hat p^k_{x_0,R}(x,t)$ converges to $\hat p_{x_0,R}(x,t)$ in $L^{3/2}(B_{2R}(x_0)\times (0,T_n))$, which leads to the desired local pressure expansion. Since $n$ was arbitrary, this gives the pressure formula for arbitrarily large times.

At this point we have established items 1.-3. from the definition of local energy solutions. 
The remaining items follow from the arguments  in \cite[pp. 156-158]{KiSe} and \cite[\S3]{KwTs}. This is because for any time $T_0$, we have the same convergences of $u^k$ and $p^k$ on $B_n\times T_0$ for all $n\in \N$ as in \cite{KiSe} and \cite{KwTs}. 
For convenience,  we briefly survey the details.
 
Fix $T_0$ and choose $n$ so that $T_n\geq T_0$. Then \eqref{list.convergence} holds for all $n$ with $T_n$ replaced by $T_0$.  Furthermore the estimates \cite[(4.1)-(4.4),(4.7),(4.9)]{KiSe} are valid up to a re-definition of $A$.  It follows from \cite[(4.7),(4.9)]{KiSe} that for every $n$,
\[
t\mapsto \int_{B_n} v\cdot w \,dx,
\]
is continuous on $[0,T_0]$ for every $w\in L^2(B_n)$ (alternatively, see \cite[(3.27)]{KwTs}). Since $T_0$ was arbitrary, we can extend this to all times. The local energy inequality follows from the local energy equality for $u^k$ and $p^k$, and \cite[(4.6)-(4.8),(4.10)]{KiSe} (we do not need \cite[(3.4)]{KiSe} since we did not regularize the initial data; see also \cite[(3.28)]{KwTs}). Convergence to the initial data in $L^2_\loc$ follows from \cite[(4.10), (4.12)]{KiSe}. This confirms that items 4.-6. from the definition of local energy solutions are satisfied and finishes the proof of Theorem \ref{thrm.existence}.
\end{proof}

\subsection{DSS local energy solutions for DSS data in $L^2_\uloc$}
\label{sec.existence.DSS}
 
We digress to reconsider a comment made in the introduction, in particular our claim that it is not difficult to show the discretely self-similar solutions constructed in \cite{BT5} are local energy solutions, when the initial data belong to $L^2_\uloc$.  We now explain how to do this.
In \cite{BT5}, we constructed a DSS solution pair $(u,p)$ to the Navier-Stokes equations as a limit of the DSS solutions $(u_k,p_k)$ with initial data $u_0^k\in L^{3,\I}\subset E^2$. The approximations satisfy the local pressure expansion and, consequently, are local energy solutions  (this follows from \cite{BT1} and \cite{KMT}). 
It is possible to show the local pressure expansion is inherited by $p$.
In particular, let $x_0\in \R^3$, $R>0$ and $T>0$. Let 
\EQ{
\hat p^k_{x_0,R} &= - \Delta^{-1}\div\div ( u_k\otimes u_k  \chi_{4R}(x-x_0))
\\& - \int_{\R^3} (  K(x-y)-K(x_0-y) ) (u_k\otimes u_k)(y,t) (1- \chi_{4R}(y-x_0))\,dy,
}
and 
\EQ{
\hat p_{x_0,R} &= - \Delta^{-1}\div\div ( u \otimes u  \chi_{4R}(x-x_0)) 
\\&- \int_{\R^3} (  K(x-y)-K(x_0-y) ) (u\otimes u)(y,t) (1- \chi_{4R}(y-x_0))\,dy,
}
where we are using notation from the proof of Theorem \ref{thrm.existence}. Since $u_0\in L^2_\uloc$ and since $(u_{k},p_k)$ are all local energy solutions, we have uniform estimates for $u_k$ by Lemma \ref{lemma.JiaSve}, provided $T$ is sufficiently small (depending on $u_0$). 
We also have $u\in L^\I L^2_\uloc$ and $\nb u$ satisfies 
\[
\sup_{x_0\in \R^3}\int_0^T	\int_{B_1(x_0)}	 |\nb u|^2\,dx\,dt <\I	.
\] 
Indeed, the convergence properties in \cite{BT5} and the argument in \cite[(3.18)-(3.20)]{KwTs}, imply the uniform bounds for $u_k$ are inherited by $u$.

We now know that $u^k,u\in L^\I(0,T ;L^2_\uloc)$ with uniform bounds and that $u_k$ converges to $u$ in $L^3( B_{8R}(x_0) \times (0,T)  )$. 
By the usual estimates (e.g. in the proof of Theorem \ref{thrm.existence}), it follows that 
\[
\hat p^k_{x_0,R} \to \hat p_{x_0,R},
\]
in $L^{3/2}(B_{2R}(x_0)\times (0,T))$.  
Since $(u_k,p_k)$ and $(u,p)$ solve the Navier-Stokes equations as distributions, the weak form of \eqref{eq.NSE} and the convergence properties in \cite{BT5} imply $\nb p^k_{x_0,R} = \nb p$ in $B_{2R}(x_0)\times (0,T)$ in the distributional sense. In particular, we have 
\[
\nb p^k \to \nb p
\]
in $\mathcal D' (  B_{2R}(x_0)\times (0,T))$ and 
\[
\nb p^k  = \nb p_{x_0,R}^k \to \nb p_{x_0,R},
\]
in $\mathcal D' ( B_{2R}(x_0)\times (0,T))$,
implying $\nb p = \nb p_{x_0,R}$ in $\mathcal D' ( B_{2R}(x_0)\times (0,T))$. We may thus define $c_{x_0,R} (t):= p(x,t)- \hat p_{x_0,R}$, which is a function in $L^{3/2}(0,T)$. Note that this argument was applied for some small $T$ (independent of $k$), but can be extended to all $T>0$ using discrete self-similarity.  This proves the solutions constructed in \cite{BT5} satisfy the local pressure expansion. 

To prove that the solution is a local energy solution, we must also prove some continuity in time, namely
\[
t\mapsto \int u(x,t) \cdot w (x)\,dx,
\]  
is continuous on $(0,\I)$ for any compactly supported $w\in L^2$.
This is known for $u_k$ by \cite{KMT} since these solutions have sufficient decay at spatial infinity. This follows for $u$ in the usual way -- in particular see the argument preceding \cite[(3.27)]{KwTs}.

\section{Uniqueness}\label{sec.unique}
In this section we prove Theorems \ref{thrm.uniquenessa}, \ref{thrm.uniquenessb} and  \ref{thrm.uniqueness2}.  Theorem \ref{thrm.uniquenessa} will be proven following the theme of Jia \cite[Proof of Theorem 3.1]{Jia-uniqueness}. 
There are two main differences in our approach:
First, when $u_0\in L^{3,\I}$, we have \[
\| e^{t\Delta}u_0\|_{L^4_\uloc}\lesssim t^{-1/8}\|u_0\|_{L^{3,\I}}.
\]
Interestingly, this breaks down when we replace $L^{3,\I}$ by $M^{2,1}$, as is shown in Example \ref{counterexample}. Due to this we need to modify Jia's argument.  The modification is similar to the setup in \cite{LR-Morrey}.
Second, the integral formula for mild solutions has to be checked explicitly since $M^{2,1}$ does not embed in $E^2$ while $L^{3,\I}$ does, see Lemma \ref{lemma62}.  Membership in $E^2$ is enough to guarantee that a local energy solution is a mild solution; 
see \cite[\S8]{KMT}.

 \begin{proof}[Proof of Theorem \ref{thrm.uniquenessa}]
By Kato \cite[Lemmas 2.2 and 4.2]{Kato}, we have 
\EQ{\label{0508-a}
\norm{e^{t \De}u_0}_{M^{2,1}}  &\le     \norm{u_0}_{M^{2,1}},
}
\EQ{\label{0508-b}
t^{1/2}\norm{e^{t\De} \mathbb P \nb\cdot F}_{ M^{2,1}} +
t^{3/4}\norm{e^{t\De} \mathbb P \nb\cdot F}_{ M^{4,1}} &\le C \norm{ F}_{M^{2,1}},
}
where  $\mathbb P$ is the Helmholtz projection in $\R^3$, which is bounded in Morrey spaces by \cite[Lemma 4.2]{Kato}.
Also note
\EQ{\label{0508-b2}
\norm{uv}_{M^{4,1}} \lec \norm{u}_{L^\infty}\cdot \norm{v}_{M^{4,1}},\quad
\norm{uv}_{M^{2,1}} \lec \norm{u}_{M^{4,1}}\cdot \norm{v}_{M^{4,1}}.
}

Let $u_0\in M^{2,1}$ be as in the statement of Theorem \ref{thrm.uniquenessa} with $\e = \norm{u_0}_{M^{2,1}}$ sufficiently small. 
Note $\sup_{0<r<\infty} N_r^0 \le C\norm{u_0}_{M^{2,1}}^2 < 1$. Thus $\si(r) = c_0$ for all $r>0$, where $\si(r)$ and $c_0$ are defined in  \eqref{def.sigma}.
Let $u\in \mathcal N(u_0)$.
By the third part of Theorem \ref{thrm.littleo}, we have
\EQ{\label{0508-c} 
\norm{u(t)}_{L^\infty} \le C \e\, t^{-1/2},\quad (0<t<\I).
}
Fix $t\in (0,\I)$.
Let $r_t = \sqrt{t/c_0}$.  Using \eqref{ineq.apriorilocal} we have 
\EQ{\label{ineq.Morrey1}
\sup_{r>r_t, x\in \R^3} \frac 1r \int_{B(x,r)} |u(t)|^2 <  C\e^2.
}
For $r< r_t$, using \eqref{0508-c} and the above at $r=r_t$,
\EQ{\label{ineq.Morrey2}
\frac 1r \int_{B_r(x)} |u(t)|^2  &= \frac 1r \bke{ \int_{B_r(x)} |u(t)|^2}^{1/3} \bke{ \int_{B_r(x)} |u(t)|^2}^{2/3} 
\\
&\lec \norm{u(t)}_{L^\infty} ^{2/3} \bke{\int_{B(x,\, {r_t})} |u(t)|^2}^{2/3} 
\\
&\lec C(\e) t^{-1/3} r_t ^{2/3}  = C(\e).
}
Because $t$ was arbitrary, we have shown that,
\EQ{\label{0508-d}
\sup_{0<t<\infty} \norm{u(t)}_{M^{2,1}} \le C( \e).
}
Hence
\EQ{\label{0508-e}
 \norm{u(t)}_{M^{4,1}} \le \norm{u(t)}_{L^\infty}^{1/2} \norm{u(t)}_{M^{2,1}} ^{1/2} \le C( \e) t^{-1/4},
 \quad (0<t<\infty).
}

We now show that $u$ is a mild solution, that is, $u$ satisfies the integral form of the Navier-Stokes equations
\EQ{\label{INS}
	u(x,t)=e^{t\Delta}u_0(x) -\int_0^t e^{(t-s)\Delta}\mathbb P \nabla\cdot (u\otimes u) (s)\,ds.
}
Let
\EQ{\label{tdu.def}
\td u(x,t) = e^{t\Delta}u_0(x) -\int_0^t e^{(t-s)\Delta}\mathbb P \nabla\cdot (u\otimes u) (s)\,ds.
}
  
By \eqref{0508-a},
\EQ{\label{ineq.1.8.19.a}
\| e^{t\Delta}u_0\|_{M^{2,1}}\leq \| u_0\|_{M^{2,1}}.
}
By \eqref{0508-b}, \eqref{0508-c} and \eqref{0508-d},
\EQ{\label{ineq.1.8.19.b}
\bigg\| \int_0^t  e^{(t-s)\Delta} \mathbb P \nabla\cdot (u\otimes u) (s) \,ds\bigg\|_{M^{2,1}}
&\leq \int_0^t \frac C {(t-s)^{1/2}} \| u\otimes u (s)\|_{M^{2,1}}\,ds
\\&\leq  \int_0^t \frac C {(t-s)^{1/2}} \|u(s)\|_{L^\I} \|  u (s)\|_{M^{2,1}}\,ds
\\&\leq   \int_0^t \frac {C(\e)} {(t-s)^{1/2} s^{1/2}}  \,ds
= C(\e).
}
Thus
\EQ{
\sup_{0<t<\I}\|\td u(t)\|_{M^{2,1}} \leq C(\e).
}

Let $U=u-\td u$.  Then, $U\in L^\I M^{2,1}$ and, therefore, so is $U_\e= \eta_\e* U$. As in \cite[after (8.14)]{KMT},  $\om_\e=\curl U_\e$ is a bounded solution to the heat equation with zero initial data and is therefore equivalently $0$.  Hence, $U_\e$ is curl free and divergence free, implying it is harmonic.  Thus, for any $x_0\in \R^3$ and $t>0$, we have for all $r>0$ that
\EQ{
U_\e(x_0,t) = \frac C {r^3}\int_{B(x_0,r)} U_\e (y,t)\,dy.
}
So 
\EQ{
|U_\e(x_0,t)|\leq \inf_{r>0} \frac C {r}\bigg(\frac 1 r \int_{B(x_0,r)} |U_\e(y,t)|^2\,dy \bigg)^{1/2}.
}
The right hand side of the above inequality is zero because $U_\e (t)\in M^{2,1}$.  Therefore, $U_\e \equiv 0 $ for all $\e>0$ and, therefore, $U=0$.  It follows that $u$ is a mild solution.

Assume $v\in\mathcal N (u_0)$ also.  Then, $v$ also satisfies an integral formula.
Let $w=u-v$.    Then,
\[ 
w(\cdot, t)=-\int_0^t e^{(t-\tau)\Delta}\mathbb P \nabla\cdot( w\otimes w + v\otimes w+w\otimes v   )(\cdot,\tau)\,d\tau.
\]
Let $\al(t)=\esssup_{0\leq s\leq t}s^{1/4}\|w(s)\|_{M^{4,1}}$.
Using \eqref{0508-b}, \eqref{0508-b2} and \eqref{0508-e}, we have
$\al(t) \le C(\e)$ and,
 for $0<s<t$
\EQN{
\|w(s)\|_{M^{4,1}}&\leq \int_0^s \frac C {(s-\tau)^{\frac 34}} \| w\otimes w + v\otimes w+w\otimes v  \|_{M^{2,1}}(\tau)\,d\tau 
\\
&\leq \int_0^s \frac C {(s-\tau)^{\frac 34}}\bke{( \| u (\tau) \|_{M^{4,1}} + \| v(\tau) \|_{M^{4,1} } ) \| w(\tau)\|_{M^{4,1} }   }\,d\tau 
\\
&\leq \int_0^s \frac {C(\e)} {(s-\tau)^{\frac 34} }   \tau^{-1/2} \al(t)\,d\tau
\\
& = C(\e) s^{-1/4} \al(t).  
}
Taking esssup in $s\in(0,t)$, we get
\[
\al(t) \le  C(\e) \al(t).
\]
If we take $\e>0$ sufficiently small such that $C(\e) < 1$, we get $\al(t)=0$. Therefore, $u=v$.  This concludes the case when $\|u_0\|_{M^{2,1}}$ is small.
\end{proof}

 \begin{proof}[Proof of Theorem \ref{thrm.uniquenessb}]
Assume $\limsup_{R\to 0}N^0_R <\e$ for some $\e>0$,
 and either $u_0$ satisfies \eqref{condition.forexistence} or $u_0\in E^2$.  
We will prove that if $\e$ is sufficiently small, then there exists $T>0$ so that $u=v$ on $\R^3\times (0,T)$ as distributions.  Let $R_0$ satisfy   $\sup_{R<R_0}{N^0_R} <\e$.    By Theorem \ref{thrm.littleo} we have for $T=c_0  R_0^2$,
\[
 t^{1/2}\|u(\cdot, t)\|_{L^\I}\leq C(\e),\quad (0<t\le T).
\]
Now, using the estimates \eqref{ineq.Morrey1} (for $r=\sqrt T$ only) and \eqref{ineq.Morrey2}  (for $r<\sqrt T$) we have for all $t\in (0,T)$ and $r\in (0,T^{1/2})$ that 
\[
\frac 1 r \int_{B_r(x)} |u(x,t)|^2\,dx \leq C(\e),
\] 
implying $\|u(t)\|_{M^{2,1}_{<T^{1/2}}} \leq C(\e)$ for all $t<T$. Also note that 
\[
\|u(t)\|_{M^{4,1}_{<T^{1/2}}} \leq \|u (t)\|_{L^\I}^{1/2} \|u(t)\|_{M^{2,1}_{<T^{1/2}}}^{1/2}
\leq C(\e) t^{-1/4}.
\]

In the next step we check that $u$ satisfies the integral formula \eqref{INS} on $\R^3\times (0,T)$. 
If $u_0\in E^2$, then this follows from \cite[\S8]{KMT}. On the other hand, assume that $u_0$ satisfies \eqref{condition.forexistence}.  
By \eqref{ineq.apriorilocal},
\EQN{
\esssup_{0\leq t\leq \si(r) r^2} \sup_{x_0\in \R^3} \int_{B_r(x_0)} |u|^2\,dx  < C  A_0(r), \quad A_0(r)=\sup_{x_0\in \R^3}\int_{B_r(x_0)}  |u_0|^2\,dx,
}
where $\si(r) = c_0 \min \{  r^2 (A_0(r))^{-2}  ,1 \}$.
Since $u_0$ satisfies \eqref{condition.forexistence}, we have $\si(r) r^2 \to \I$ as $r\to \I$.  So, there exists $\bar R$ so that, for all $R>\bar R$, $\si(R)R^2 > T$. We conclude for any $r>0$
\EQ{\label{eq4.16}
\esssup_{0\leq t\leq T} \sup_{x_0\in \R^3 } \int_{B_r(x_0)} |u(x,t)|^2\,dx \leq f(r),
}
where   
$f(r) =C A_0(r)+ C A_0(\bar R)$.

Let $\td u$ be defined by \eqref{tdu.def}. Denote 
\[
\norm{w}_{L^q_{\uloc,r}} = \sup _{x \in \R^3}\norm{w}_{L^q(B(x,r))}.
\]
Recall Maekawa-Terasawa \cite[(1.8),(1.10)]{MaTe}, for $1 \le q \le p \le \infty$,
\EQ{\label{MTa}
\norm{e^{t\Delta}u_0}_{L^p_{\uloc,r}} \le C  \min(\sqrt t,r)^{\frac 3p -\frac 3q}  \norm{u_0}_{L^q_{\uloc,r}},
}
\EQ{\label{MTb}
\norm{e^{t\Delta}\mathbb P \nb \cdot F}_{L^p_{\uloc,r}} \le  Ct^{-1/2} \min(\sqrt t,r)^{\frac 3p -\frac 3q} \norm{ F}_{L^q_{\uloc,r}}.
}
The same computations in \eqref{ineq.1.8.19.a}--\eqref{ineq.1.8.19.b} with $M^{2,1}$ replaced by $L^2_{\uloc,r}$ and using \eqref{MTa}-\eqref{MTb} instead of \eqref{0508-a}--\eqref{0508-b} give
\[
\esssup_{0\leq t\leq T} \sup_{x_0\in \R^3 } \int_{B_r(x_0)} |\td u(x,t)|^2 \,dx\le C A_0(r) +C(\e) \sup_{0<s<T}\sup_{x_0\in \R^3 } \int_{B_r(x_0)} | u(x,s)|^2 \,dx\le
Cf(r).
\]
Thus $U=u - \td u$ satisfies
\EQ{\label{U.bound}
\esssup_{0\leq t\leq T} \sup_{x_0\in \R^3 } \int_{B_r(x_0)} |U(x,t)|^2\,dx \leq C f(r),\quad \forall r>0.
}
The same argument in \cite[\S8]{KMT} shows that mollified $U_\e$ is harmonic in $x$ and for fixed $t \in (0,T)$
we have
\EQ{\label{Ue.vanishing}
|U_\e(x,t)| \le C \bke{\frac 1{r^3} \int_{B(x,r)} |U_\e(y,t)|^2 \,dy}^{1/2} \le C \bke{\frac {f(r)}{r^3}}^{1/2}\to 0 \quad\text{as}\quad r\to \infty.
}
This shows $U_\e(t)=0$ for $t<T$, for all $\e>0$. Hence $U=0$ and $u = \td u$.
 
At this stage we have shown that any local energy solution with data satisfying the assumptions of  Theorem \ref{thrm.uniquenessb} is a mild solution.

We continue similarly to the proof of Theorem \ref{thrm.uniquenessa}. Let $w=u-v$ where $u$ and $v$ are local energy solutions with the same data satisfying the assumptions of  Theorem \ref{thrm.uniquenessb}.  

Using \eqref{MTb} we have for $s<T$ and $R=R_0$ that
\EQ{
\|w(s)\|_{L^4_{\uloc,R}} 
&\leq \int_0^s \frac C {(s-\tau)^{\frac 12}} \| w\otimes w + v\otimes w+w\otimes v  \|_{L^4_{\uloc,R}}(\tau)\,d\tau 
\\
&\leq \int_0^s \frac C {(s-\tau)^{\frac 12}  \tau^{\frac 14}}  (\|v\|_\I +\|w\|_\I)  \tau^{1/4}\| w  \|_{L^4_{\uloc,R}}(\tau)\,d\tau
\\
&\leq CC(\e)\int_0^s \frac 1 {(s-\tau)^{\frac 12}  \tau^{\frac 34}}    \tau^{1/4}\| w  \|_{L^4_{\uloc,R}}(\tau)\,d\tau .
}  
Let $\al(s)=\sup_{0<\tau\leq s}\tau^{1/4} \|w(\tau)\|_{L^4_{\uloc,R}}$.
Then,
\EQ{
\|w(s)\|_{L^4_{\uloc,R}} &\leq CC(\e)\al(s)  \int_0^s  \frac 1 {(s-\tau)^{\frac 12}  \tau^{\frac 34}}  \,d\tau \leq CC(\e) \al(s) s^{-1/4}.
}
Taking the  
essential supremum over $s\in (0,T)$ gives
\[
\al(T) \le C C(\e) \al(T),
\]
and, taking $\e$ sufficiently small we obtain uniqueness on $(0,T)$.  
\end{proof}
 
\begin{remark} 
 Uniqueness in Theorem \ref{thrm.uniquenessb} cannot be extended beyond $T$, since the smallness of $t^{1/2}\|u(\cdot, t)\|_{L^\I}$ for $t>T$ is unknown. 
\end{remark}

\begin{remark} 
 To get $U_\e=0$ by \eqref{Ue.vanishing}, we only need $f(r) = o(r^3)$ as $r \to \infty$. Assumption
\eqref{condition.forexistence} is needed to get the a priori bound  \eqref{U.bound} for all $r$ and a \emph{fixed} $T$.
 If \eqref{condition.forexistence} is replaced by a weaker condition $\lim_{r \to \infty}r^{-2}A_0(r) = \de>0$, then we can show a priori bound of $u$ up to time $T'=\liminf_{r\to \infty}c(r)r^2>0$, and we can still get $U_\e=0$ for $0<t<\min(T,T')$ by \eqref{Ue.vanishing}.
\end{remark}

We now prove Theorem \ref{thrm.uniqueness2}. 

\begin{proof}[Proof of Theorem \ref{thrm.uniqueness2}]
Fix $0<T<\I$. Note that since $u_0\in E^2$, $u(t)\in E^2$ for every $t$ (see \cite{KiSe,KMT}).
For $t\in [0,T]$, let $\bar r(t)\leq {1}$ be the largest scale so that
\[
\sup_{r<\bar r(t), x_0\in \R^3}\frac 1 r \int_{B_r(x_0)} |u(x,t)|^2 \,dx <\e_2.
\]
If $\{\bar r(t)\}_{t\in [0,T]}$ is bounded away from $0$, say by $r_0$, then we are done.  Indeed, applying Theorem \ref{thrm.uniquenessb}
at time $t=0$, we obtain uniqueness on $[0,cr_0^2]$.  Then, applying Theorem \ref{thrm.uniquenessb} at   time $t = cr_0^2$, we obtain uniqueness up to time $2cr_0^2$. This argument is iterated a finite number of times to obtain uniqueness on $[0,T]$.  If this can be done for any $T>0$, then we have $u=v$.

We must prove $\{\bar r(t)\}_{t\in [0,T]}$ is bounded away from $0$.  Suppose there exists a time $t_* \in [0,T]$ and a sequence $0\leq t_n\to t_*$ so that $\{\bar r(t_n) \}$ is not bounded away from zero.  We may assume   $\bar r(t_n)$ decreases to zero.  Since $u(t_*)\in m_{\e}^{2,1}$, there exists $\td r$ so that
\[
\sup_{r<\td r, x_0\in \R^3}\frac 1 r \int_{B_r(x_0)} |u(x,t_*)|^2 \,dx \leq \e. 
\]
By continuity we have
\[
 \| u(t_n) - u(t_*)\|_{M^{2,1}_{<\td r  }} \le \| u(t_n)-u(t_*)\|_{m^{2,1}} <\sqrt{\e_2}-\sqrt{\e},
\]
for $n$ sufficiently large. 
But then 
\[
\sup_{r<\td r, x_0\in \R^3}\frac 1 r \int_{B_r(x_0)} |u(x,t_n)|^2 \,dx=\| u(t_n)\|_{M^{2,1}_{<\td r  }}^2  \leq
 \bke{\| u(t_n) - u(t_*)\|_{M^{2,1}_{<\td r  }}  + \|   u(t_*)\|_{M^{2,1}_{<\td r  }}}^2 < \e_2,
\]
implying $\bar r(t_n)\geq \td r$ for $n$ sufficiently large. This is a contradiction.  Thus, $\{\bar r(t)\}$ is bounded away from zero and uniqueness follows.
\end{proof}

The proof of Theorem 1.8 can be modified to show $\bar r(t)$ is lower semicontinuous.

% ================================================= %

\section{Proofs of corollaries }\label{sec.proofs2}

We begin this section with a lemma concerning the relationships between the function spaces introduced in Section \ref{sec.intro}. We then  prove the corollaries from Section \ref{sec.intro}. 
 
\begin{lemma}\label{lemma.morreyspacedecay}
If $u_0\in \td M^{2,1}$, then $\lim_{R\to\I}R^{-1}A_0(R)=0$ and $\lim_{R\to 0}R^{-1}A_0(R)=0$.  
If $u_0\in  \td m^{2,1}$, then  $\lim_{R\to 0}R^{-1}A_0(R)=0$.
\end{lemma}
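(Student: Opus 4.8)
The plan is to reduce both assertions to the standard scheme whereby a quantity that vanishes on a dense subset and is uniformly controlled by the ambient norm must vanish throughout. For a fixed scale $R>0$ and $f\in L^2_\uloc$, write $N_R[f]:=\sup_{x_0\in\R^3}\frac 1R\int_{B_R(x_0)}|f|^2\,dx$, so that $R^{-1}A_0(R)=N_R[u_0]=N^0_R$ in the notation of the lemma. Two elementary facts drive the argument. First, for each fixed $R$ the map $f\mapsto N_R[f]^{1/2}$ is a seminorm: Minkowski's inequality on each ball $B_R(x_0)$ followed by the supremum over $x_0$ gives $N_R[f+g]^{1/2}\le N_R[f]^{1/2}+N_R[g]^{1/2}$. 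Second, directly from \eqref{Mps} and \eqref{mps1}, one has $N_R[f]^{1/2}\le\|f\|_{M^{2,1}}$ for every $R>0$, and $N_R[f]^{1/2}\le\|f\|_{m^{2,1}}$ whenever $R\le 1$, since fixing $R$ only drops the supremum over scales.

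Next I would verify the conclusion directly for a test function $\phi\in C_c^\infty$ with $\supp\phi\subset B_M(0)$. As $R\to 0$, the crude bound $\frac 1R\int_{B_R(x_0)}|\phi|^2\le\frac 1R\|\phi\|_{L^\I}^2|B_R|=C\|\phi\|_{L^\I}^2R^2$ holds uniformly in $x_0$, so $N_R[\phi]\to 0$; as $R\to\I$, the bound $\frac 1R\int_{B_R(x_0)}|\phi|^2\le\frac 1R\|\phi\|_{L^2}^2$ holds uniformly in $x_0$, so again $N_R[\phi]\to 0$.

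The density step then finishes the proof. Given $u_0\in\td M^{2,1}$ and $\e>0$, choose $\phi\in C_c^\infty$ with $\|u_0-\phi\|_{M^{2,1}}<\e$; the seminorm and domination properties give, for every $R>0$, $N_R[u_0]^{1/2}\le\|u_0-\phi\|_{M^{2,1}}+N_R[\phi]^{1/2}<\e+N_R[\phi]^{1/2}$. Letting $R\to 0$ (respectively $R\to\I$) and invoking the test-function limits yields $\limsup N_R[u_0]^{1/2}\le\e$; as $\e>0$ is arbitrary, both $\lim_{R\to 0}N^0_R=0$ and $\lim_{R\to\I}N^0_R=0$ follow. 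The $\td m^{2,1}$ claim is proved identically, approximating in the $m^{2,1}$ norm and restricting to $R\le 1$; here only the limit $R\to 0$ is available, giving $\lim_{R\to 0}N^0_R=0$.

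I do not anticipate a genuine obstacle here: this is the routine ``dense plus uniformly bounded'' argument. The only points requiring care are the scaling bookkeeping in the test-function estimate---the gain $R^2$ as $R\to 0$ versus the gain $R^{-1}$ as $R\to\I$---and the observation that $N_R^{1/2}$ is a bona fide seminorm dominated by the Morrey norm uniformly in $R$, which is exactly what allows the approximation error $\e$ to be absorbed independently of the scale.
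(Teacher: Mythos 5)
Your proof is correct and follows essentially the same route as the paper: approximate $u_0$ by a test function in the $M^{2,1}$ (resp.\ $m^{2,1}$) norm, use the triangle inequality for the square root of the truncated Morrey quantity, and exploit that $N_R[\phi]\to 0$ as $R\to 0$ and $R\to\I$ for $\phi\in C_c^\I$. The only difference is cosmetic—you spell out the seminorm property and the explicit $R^2$ and $R^{-1}$ decay rates for test functions, which the paper leaves implicit.
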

 
\begin{proof}
Assume $\phi_k\to u_0$ in $M^{2,1}$ where $\{\phi_k\}\subset C_c^\I$.  Let $\e>0$ be given.  Then, there exists $k$ so that 
\[
\sup_{x_0\in \R^3; R>0}\frac 1 R \int_{B_R(x_0)} |u_0-\phi_k|^2\,dx <\e/4.
\]
Furthermore, there exists $R_k$ so that, for all $R<R_k$, 
\[
\sup_{x_0\in \R^3}\frac 1 R \int_{B_R(x_0)}|\phi_k|^2\,dx <\e/4.
\]
It follows that, for all $R<R_k$,
\[
\sup_{x_0\in \R^3} \frac 1 R \int_{B_R(x_0)}|u_0|^2\,dx <\e.
\]
On the other hand there exists $R^k>0$ so that for all $R>R^k$,
\[
\sup_{x_0\in \R^3}\frac 1 R \int_{B_R(x_0)}|\phi_k|^2\,dx <\e/4.
\]
Therefore, for $R>R^k$,
\[
\sup_{x_0\in \R^3} \frac 1 R \int_{B_R(x_0)}|u_0|^2\,dx <\e.
\]

The proof is similar for $\td m^{2,1}$.
\end{proof}

\begin{remark}\label{lemma.morreyspacedecay-rmk}
The converse statement is not true. A function $u_0$ in $m^{2,1}$ satisfying the vanishing property $\lim_{R\to 0}R^{-1}A_0(R)=0$ (i.e., $u_0 \in m^{2,1}_0$) may not be in $\td m^{2,1}$. For example, 
\[
u_0(x) = \sum_{k \in \Z} \zeta(x-ke_1)
\]
where $\zeta(x)\in C^\infty_c$ is supported in $|x|<1/4$. This function is actually in $M^{2,1}$, and is similar to that in \eqref{eq1.13}. Another example is $u_0(x) = \psi(x_2,x_3)$ where $\psi \in C^\infty_c(\R^2)$.
\end{remark}

 \medskip

We are now ready to prove the corollaries stated in Section \ref{sec.intro}.

 \begin{proof}[Proof of Corollary \ref{cor.morrey}]   Assume $u_0\in L^2_{\uloc,\si}$ and $u$ is a local energy solution to \eqref{eq.NSE} with initial data $u_0$.  

\bigskip 

\noindent 1.  If $u_0\in M^{2,r}$ for $0\leq r<1$, then $\lim_{R\to\I}R^{-1}A_0(R)=0$ and we can apply Theorem \ref{thrm.littleo}.1 to get the desired result.  On the other hand, if  $u_0\in M^{2,r}$ for $1<r\leq 3$, then $\lim_{R\to0^+}R^{-1}A_0(R)=0$, then Theorem \ref{thrm.littleo}.2 yields the desired result.
\bigskip 

\noindent 2. If $u_0\in \overline{C_c^\I}^{M^{2,1}}$, then, by Lemma \ref{lemma.morreyspacedecay}, $\lim_{R\to\I}R^{-1}A_0(R)=0$ and $\lim_{R\to 0}R^{-1}A_0(R)=0$ and we can apply Theorem \ref{thrm.littleo}.1 and \ref{thrm.littleo}.2 to get the desired result.  For the secondary conclusion, assume $u_0\in L^{3,q}$ where  $1\leq q< \I$.  Since $C_c^\I$ is dense in $L^{3,q}$ when $1\leq q<\I$, there exists a sequence $\{\phi_n\}\subset C_c^\I$ so that $\phi_n\to u_0$ in $L^{3,q}$.   By the continuous embedding of $L^{3,q}$ into $M^{2,1}$, we have $\phi_n\to u_0$ in $M^{2,1}$ also. 

\bigskip 
\noindent 3. 
If $u_0\in L^2$ then obviously
\[
\lim_{\R\to \I} \frac 1 R \int_{B_R(x_0)}|u_0|^2\,dx =0.
\]
For the other case assume $2<p<\I$ and $1\leq q\leq \I$. Using Lemma \ref{weakLq-into-Morrey} from the appendix we have 
\[
\int_{B_R(x_0)} |u_0|^2\,dx \leq C \|u_0\|_{L^{p,q}}^2 |B_R(x_0)|^{1-2/p}.
\]
Hence,
\[
\sup_{x_0\in \R^3}  \frac 1 {R} \int_{B_R(x_0)} |u_0|^2\,dx \leq CR^{2-6/p}\|u_0\|_{L^{p,q}}^2.
\]
If $2<p<3$, this vanishes as $R\to \I$ and if $3<p<\I$, this vanishes as $R\to 0^+$.  These correspond to cases from Theorem \ref{thrm.littleo} and the corollary follows.
 \end{proof}

\begin{proof}[Proof of Corollaries \ref{cor.DSS1} and \ref{cor.DSS2}]
Assume $u_0\in L^2_\uloc$ is divergence free and $\la$-DSS for some $\la>1$.   Lemma \ref{lemma.dss.equiv} implies $u_0\in M^{2,1}$ and 
\EQ{
 \norm{u_0}_{M^{2,1}}\le \sqrt \la \, \|u_0\|_{L^2_{\uloc}}.
}
If $\|u_0\|_{L^2_{\uloc}}\leq \e_1/\la$, then $\sup_{0<R<\I}R^{-1}A_0(R)<  \e_1$.  Applying Theorem \ref{thrm.littleo}.3 completes the proof of Corollary \ref{cor.DSS1}.  On the other hand, if $\|u_0\|_{L^2_{\uloc}}\leq \e_2/\la $, then applying Theorem \ref{thrm.uniquenessa} completes the proof of Corollary \ref{cor.DSS2}
\end{proof}

 \begin{proof}[Proof of Corollary \ref{cor.uniqueness}]
Assume $u_0\in E^3$.   Then, $u_0\in E^2$.
Let $\e>0$ be given.  Then, there exists $R_0$ so that  \[   \sup_{|x_0|\geq R_0 }\int_{B_1(x_0)}|u_0(x)|^3\,dx < \e.\] 
For $|x_0|\leq R_0$, there exists $\ga \in(0,1]$ so that 
\[
\sup_{|x_0|\leq R_0;0<r\leq \ga} \int_{B_r (x_0) }|u_0(x)|^3\,dx<\e,
\]
for all $r\leq\ga$.
Using H\"older's inequality, it follows that 
\[
\sup_{x_0\in \R^3; 0<r \leq  \ga} \frac 1 {r}\int_{B_{r}(x_0)}  |u_0 (x)|^2\,dx <|B_1|^{1/3} \e^{\frac 2 3}.
\]
Hence, by Theorem \ref{thrm.uniquenessb}, any local energy solution with initial data $u_0$ will be unique in the local energy class, at least up to some positive time.
\end{proof}

\begin{proof}[Proof of Corollary \ref{cor.uniqueness2}] 

Note that the norm on $\td m^{2,1}$ is just the $m^{2,1}$ norm. Hence, if $u\in C([0,\I);\td m^{2,1})$ then $u(t)\in \td m^{2,1}\subset m_\e^{2,1}$ for every $\e>0$ and $u\in C([0,\I);m^{2,1})$. By Lemma \ref{lemma.morreyspacedecay} we thus have $u\in C([0,\I);m_{\e_2/2}^{2,1})$. We also have $u_0 \in \td m^{2,1} \subset E^2$.
To apply Theorem \ref{thrm.uniqueness2}, it thus suffices to show $u\in \mathcal N(u_0)$.
All items from Definition \ref{def:localLeray} are immediate except for the local energy inequality and the local pressure expansion.

For the local energy inequality, note that
$\norm{f}_{L^4} \lec \norm{f}_{M^{2,1}}+ \norm{\nb f}_{L^2}$. The same inequality for $f_\la(x)=f(\la x)$ gives
$\la^{-3/4}\norm{f}_{L^4} \lec \la^{-1}\norm{f}_{M^{2,1}}+ \la^{1-3/2} \norm{\nb f}_{L^2}$. Optimizing in $\la$, 
\EQ{
\norm{f}_{L^4} \lec\norm{f}_{M^{2,1}}^{1/2} \norm{\nb f}_{L^2}^{1/2}.
}
Letting $\psi_{x_0} = \psi(\cdot - x_0)$,  where $\psi=1$ on $B_1(0)$, be in $C_c^\I$,   nonnegative  and  supported on $B_2(0)$, we have by the preceding inequality that
\[
\|u  \psi_{x_0}\|_{L^4_x} \lec
 \|u \psi_{x_0} \|_{M^{2,1}}^{1/2}  \norm{u \psi_{x_0}}_{H^1}^{1/2} 
\lec 
\|\psi \|_{W^{1,\infty}}^{1/2} \|u \psi_{x_0} \|_{M^{2,1}}^{1/2}   \norm{u }_{H^1(B_2(x_0))}^{1/2} .
\]
Now, for $\bar x_0\in \R^3$ and $r\geq 1$, 
\EQN{
\frac 1 {r} \int_{B_r(\bar x_0) \cap B_{2}(x_0)} |u(x,t) \psi_{x_0}|^2\,dx &\leq  \|\psi\|_{L^\I}^2  \int_{B_2(x_0)} |u(x,t)|^2\,dx \lesssim  \|\psi\|_{L^\I}^2  \|u\|_{m^{2,1}}^2,
}
while if $r<1$ we have 
\EQN{
\frac 1 {r} \int_{B_r(\bar x_0) \cap B_{2}(x_0)} |u(x,t) \psi_{x_0}|^2\,dx \leq  \|\psi\|_{L^\I}^2 \| u\|_{m^{2,1}}^2.
}
Hence
\EQ{
\|u  \psi_{x_0}\|_{L^4} \lec \|\psi \|_{W^{1,\infty}} \|u   \|_{m^{2,1}}^{1/2}   \norm{u }_{H^1(B_2(x_0))}^{1/2}.
}
Thus  
\[
\|u  \psi_{x_0}\|_{L^4_tL^4_x} \lec \|\psi \|_{W^{1,\infty}} \|u  \|_{L^\infty m^{2,1}}^{1/2} \norm{u }_{L^2 H^1(B_2(x_0))}^{1/2}.
\]
It follows from our assumptions that
\[
\sup_{x_0\in \R^3}\int_0^T \int_{B_1(x_0)} |u|^4\,dx \,dt <\I.
\]
This guarantees that $u$ satisfies the local energy inequality.\footnote{That  $u\in L^4_\uloc( \R^3\times [0,T] )$  can also be proven using the  embedding $m^{2,1}\subset   B^{-1}_{\I,\I}$ and \cite[(2)]{LR-Besov}.}

Concerning the local pressure expansion, let $\e>0$ be given and fix $R>0$.  For each $t\in [0,T]$, there exists $\phi_t\in C_c^\I$ so that  $\|u(t)-\phi_t\|_{m^{2,1}}<\e/(2R^3)$. By continuity, there exists an open interval $I_t$ containing $t$ so that $\|u(s)-\phi_t\|_{m^{2,1}}<\e/R^3$ for all $s\in I_t\cap [0,T]$.   Since $[0,T]$ is compact, we may cover $[0,T]$ using finitely many $I_{t_i}$, $1\leq i\leq k$. Then, restricting $I_{t_i}$ to a disjoint cover of $[0,T]$ by intervals $\td I_{t_i}$, we let $\phi (x,t) = \phi_{t_i}(x)\chi_{\td I_{t_i}}(t)$.  We then have
\[
\int_0^{R^2\wedge T} \int_{B_R(x_0)} |u(x,t)-\phi(x,t)|^2\,dx\,dt < \e .
\] 
Now, the spatial support of $\phi$ is contained in a compact set $K$.  So, for $|x_0|$ sufficiently large and $x\in B_R(x_0)$, $\phi(x,t)=0$ for all $t$.  Hence
\[
\int_0^{R^2\wedge T} \int_{B_R(x_0)} |u(x,t)|^2\,dx\,dt < {\e},
\]
for $|x_0|$ sufficiently large. This is the sufficient condition given in \cite{JiaSverak-minimal} for the validity of the local pressure expansion. 

We have thus shown $u\in \mathcal N(u_0)$ and can appeal to Theorem \ref{thrm.uniqueness2} to complete the proof.

\end{proof}

\section{Appendix: Relations between function spaces}\label{sec.appendix}

For clarity we include several helpful facts about the function spaces considered in this paper. These facts are known, but we include proofs for convenience.

\begin{lemma}\label{lemma.dss.equiv}
Assume $u_0$ is $\la$-DSS.  Then, $u_0\in M^{2,1}$ if and only if $u_0\in L^2_\uloc$ and 
\EQ{\label{DSSL2uloc}
\|u_0\|_{L^2_\uloc} \leq \|u_0\|_{M^{2,1}}\leq \sqrt \la \|u_0\|_{L^2_\uloc}.
}
\end{lemma}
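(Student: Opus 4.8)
The plan is to prove the nontrivial inequality $\|u_0\|_{M^{2,1}}\le \sqrt\lambda\,\|u_0\|_{L^2_\uloc}$ and to read off everything else from it. Writing $N_R^0=\sup_{x_0\in\R^3}R^{-1}\int_{B_R(x_0)}|u_0|^2$ as in the introduction, the other inequality $\|u_0\|_{L^2_\uloc}\le\|u_0\|_{M^{2,1}}$ is immediate and uses no self-similarity: $\|u_0\|_{L^2_\uloc}^2=N_1^0\le\sup_{R>0}N_R^0=\|u_0\|_{M^{2,1}}^2$. This already yields the forward implication $u_0\in M^{2,1}\Rightarrow u_0\in L^2_\uloc$, while the reverse implication will follow once $\sup_{R>0}N_R^0$ is controlled by a finite multiple of $N_1^0$.

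The key observation is that discrete self-similarity forces $R\mapsto N_R^0$ to be invariant under $R\mapsto\lambda R$. First I would insert $u_0(x)=\lambda u_0(\lambda x)$ into the Morrey average and change variables by $y=\lambda x$:
\[
\frac 1R\int_{B_R(x_0)}|u_0(x)|^2\,dx=\frac 1R\int_{B_{\lambda R}(\lambda x_0)}\lambda^2|u_0(y)|^2\lambda^{-3}\,dy=\frac{1}{\lambda R}\int_{B_{\lambda R}(\lambda x_0)}|u_0(y)|^2\,dy.
\]
Taking the supremum over $x_0\in\R^3$, and noting that $\lambda x_0$ ranges over all of $\R^3$, gives $N_R^0=N_{\lambda R}^0$ for every $R>0$; iterating yields $N_R^0=N_{\lambda^k R}^0$ for all $k\in\Z$, so $N_R^0$ depends on $R$ only through its class modulo the multiplicative action of $\lambda$.

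With this periodicity in hand, the supremum over all scales collapses to a supremum over the fundamental domain $R\in[\lambda^{-1},1]$: for any $R>0$ there is an integer $k$ with $\lambda^k R\in[\lambda^{-1},1]$, so $\sup_{R>0}N_R^0=\sup_{R\in[\lambda^{-1},1]}N_R^0$. On this interval the bound is elementary, since $R\le 1$ gives $B_R(x_0)\subset B_1(x_0)$ and $R\ge\lambda^{-1}$ gives $R^{-1}\le\lambda$:
\[
N_R^0=\sup_{x_0}\frac 1R\int_{B_R(x_0)}|u_0|^2\le\sup_{x_0}\frac 1R\int_{B_1(x_0)}|u_0|^2\le\lambda\,N_1^0.
\]
Hence $\|u_0\|_{M^{2,1}}^2=\sup_{R>0}N_R^0\le\lambda N_1^0=\lambda\|u_0\|_{L^2_\uloc}^2$, which is the claimed inequality and in particular shows $u_0\in L^2_\uloc\Rightarrow u_0\in M^{2,1}$, completing the equivalence.

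There is no serious obstacle here; the only point demanding care is the choice of fundamental domain. Choosing $[\lambda^{-1},1]$ — rather than, say, $[1,\lambda]$, where one would be forced to average over balls strictly larger than the unit ball and would pick up a worse power of $\lambda$ — is exactly what delivers the sharp constant $\sqrt\lambda$. I would also verify that $[\lambda^{-1},1]$ is a genuine fundamental domain for the $\lambda$-dilation, i.e.\ that every $R>0$ has a unique representative there; this is the observation that the interval $k\in[-1-\log_\lambda R,\ -\log_\lambda R]$ has length $1$ and so contains exactly one integer.
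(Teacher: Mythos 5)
Your proposal is correct and takes essentially the same approach as the paper: both exploit the DSS scaling to collapse the Morrey supremum over all scales onto scales within one $\lambda$-adic band of $1$, then enclose the ball in a unit ball at the cost of exactly a factor $\lambda$, yielding the sharp constant $\sqrt{\lambda}$. The only difference is organizational — the paper estimates each $R$ directly by choosing $k$ with $\lambda^k \le R < \lambda^{k+1}$ and rescaling the enlarged ball $B_{\lambda^{k+1}}(x_0)$ down to a unit ball, whereas you first record the exact invariance $N_R^0 = N_{\lambda R}^0$ and then reduce to the fundamental domain $[\lambda^{-1},1]$; the underlying computation is identical.
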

\begin{proof}Assume  $u_0$ is $\la$-DSS. If $u_0\in M^{2,1}$ then we clearly have $u_0\in L^2_\uloc$ and 
\[
\|u_0\|_{L^2_{\uloc}}\le \norm{u_0}_{M^{2,1}}.
\]

Assume $u_0\in L^2_\uloc$.  
Since $u_0$ is $\la$-DSS then, for any $(x_0,R)$, we can take $k$ so that $\la^k\leq R <\la^{k+1}$, and we have
\[
\frac 1 {R}\int_{B_R(x_0)} |u_0(x)|^2\,dx\leq \frac 1 {\la^{k}}\int_{B_{\la^{k+1}}(x_0)} |u_0(x)|^2\,dx =  \la \int_{B_{1}(x_0/\la^{k+1})} |u_0(y)|^2\,dy \leq \la \|u_0\|_{L^2_{\uloc}}^2.
\]
Therefore, 
\[
\|u_0\|_{M^{2,1}} \le \sqrt \la \, \|u_0\|_{L^2_{\uloc}}.\qedhere
\]
\end{proof}

\begin{lemma}\label{weakLq-into-Morrey}
If $E \subset \R^n$, $|E|<\infty$, $1\le p < q<\infty$. Then $L^{q,\infty}(E) \subset L^p(E)$ and
\EQ{\label{eq0}
\int_E |f|^p \lec \norm{f}_{L^{q,\infty}(E)}^p |E|^{1-\frac pq}.
}
\end{lemma}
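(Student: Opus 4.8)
The plan is to use the distribution-function (layer-cake) representation of the $L^p$ integral together with the two elementary bounds on the distribution function of $f$. Write $\lambda(\si) = \abs{\{x \in E : \abs{f(x)} > \si\}}$ for the distribution function of $f$ on $E$. The definition of the weak-$L^q$ norm gives $\si\,\lambda(\si)^{1/q} \le \norm{f}_{L^{q,\infty}(E)}$, hence $\lambda(\si) \le \norm{f}_{L^{q,\infty}(E)}^q\,\si^{-q}$ for all $\si>0$, while the finiteness of $E$ gives the trivial bound $\lambda(\si) \le \abs{E}$. The point is that the first bound is efficient for large $\si$ and the second for small $\si$, and combining them on the appropriate ranges is exactly what makes the resulting integral converge.

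First I would recall the layer-cake identity
\[
\int_E \abs{f}^p \,dx = p \int_0^\I \si^{p-1} \lambda(\si) \,d\si,
\]
and split the outer integral at a threshold $\si_0>0$ to be chosen. On $(0,\si_0)$ I apply $\lambda(\si)\le \abs{E}$, which produces a contribution $\abs{E}\,\si_0^{\,p}$. On $(\si_0,\I)$ I apply $\lambda(\si)\le \norm{f}_{L^{q,\infty}(E)}^q\,\si^{-q}$; since $p<q$ the exponent $p-1-q$ is strictly less than $-1$, so the integral converges and contributes $\tfrac{p}{q-p}\norm{f}_{L^{q,\infty}(E)}^q\,\si_0^{\,p-q}$.

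To finish, I would optimize over the free parameter $\si_0$ by balancing the two contributions. The natural choice $\si_0 = \norm{f}_{L^{q,\infty}(E)}\,\abs{E}^{-1/q}$ makes each term equal to $\norm{f}_{L^{q,\infty}(E)}^p\,\abs{E}^{1-p/q}$, and summing yields
\[
\int_E \abs{f}^p \,dx \le \frac{q}{q-p}\,\norm{f}_{L^{q,\infty}(E)}^p\,\abs{E}^{1-\frac pq},
\]
which is precisely \eqref{eq0} with explicit constant $\tfrac{q}{q-p}$. In particular the right-hand side is finite, so this simultaneously gives the inclusion $L^{q,\infty}(E)\subset L^p(E)$.

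There is essentially no serious obstacle here; the only step requiring attention is the convergence of the tail integral, which is guaranteed precisely by the strict inequality $p<q$. The endpoint $p=q$ would make that integral diverge logarithmically, consistent with the well-known failure $L^{q,\infty}\not\subset L^q$, so the hypothesis $p<q$ is genuinely used.
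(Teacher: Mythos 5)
Your proof is correct and follows essentially the same route as the paper: the layer-cake formula, the two bounds $\lambda(\si)\le\min(|E|,\norm{f}_{L^{q,\infty}(E)}^q\si^{-q})$, a split of the integral at a threshold, and the same choice $\si_0=\norm{f}_{L^{q,\infty}(E)}|E|^{-1/q}$ (the paper calls it $T$). One pedantic note: with that choice the two contributions are proportional rather than exactly equal (the tail carries the extra factor $\tfrac{p}{q-p}$), but your final constant $\tfrac{q}{q-p}$ is computed correctly.
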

\begin{proof}
Let $\om(t) = \om_{f,E}(t) = |\{x \in E: |f(x)|>t\}|$, the distribution function of $|f|$ on $E$. Let $M=\norm{f}_{L^{q,\infty}(E)}$.
We have $\om(t)\le \min(|E|, \, (M/t)^q)$ and for $T>0$
\EQN{
\int_E |f|^p &= \int_0^\infty p t^{p-1} \om(t)\,dt
\\
&\le \int_0^T p t^{p-1} |E|\,dt+\int_T^\infty p t^{p-1} (M/t)^q\,dt
\\
&= T^p |E| + \frac p{q-p} M^q T^{p-q}
}
Choosing $T=M|E|^{-1/q}$, we get \eqref{eq0}.
\end{proof}

In particular, if $E=B_R$,
\[
\frac 1{R^m} \int_{B_R} |f|^p \lec M^p, \quad m = n(1-\frac pq).
\]
This shows
\EQ{\label{eq1}
L^{q,\infty} (\R^n) \subset M^{p, n(1-\frac pq)}(\R^n).
}

We limit ourselves to $\R^3$ in the following lemma.

\begin{lemma}\label{lemma62}
We have
\EQ{\label{eq2}
L^{3,\infty} (\R^3) \subset M^{2, 1}(\R^3)\subset L^2_\uloc,
}
\EQ{\label{eq3}
L^{3,\infty} (\R^3) \subset E^2,
}
but for any $\la>1$,
\EQ{\label{eq4}
\la\text{-DSS } \cap 
 M^{2, 1}(\R^3)\not\subset E^2.
}
\end{lemma}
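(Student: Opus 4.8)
The plan is to exhibit, for each fixed $\lambda>1$, an explicit $\lambda$-DSS function that lies in $M^{2,1}$ but fails the decay at spatial infinity characterizing $E^2$. The guiding principle is the one behind \eqref{eq1.13}: a function leaves $E^2$ exactly when its local $L^2$ mass on unit balls does not vanish as $|x_0|\to\I$, and a critically scaling singularity such as $|x|^{-1}$, once spread along a DSS orbit, reproduces \emph{exact} translated copies of itself in the far field. First I would fix the seed: take $y_0=\sqrt\lambda\,e_1$, which lies interior to the fundamental annulus $\{1<|y|<\lambda\}$, and a radius $\rho>0$ small enough that $\overline{B_\rho(y_0)}\subset\{1<|y|<\lambda\}$. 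Let $\chi\in C^\infty_c(B_\rho(y_0))$ with $\chi\equiv1$ on $B_{\rho/2}(y_0)$, set $g(y)=\chi(y)\,|y-y_0|^{-1}$, and define
\[
u_0(x)=\sum_{k\in\Z}\lambda^k\,g(\lambda^k x).
\]
A reindexing gives $u_0(\lambda x)=\lambda^{-1}u_0(x)$, i.e. $\lambda u_0(\lambda x)=u_0(x)$, so $u_0$ is $\lambda$-DSS. Since $g$ is supported in $B_\rho(y_0)$, the $k$-th term is supported in $B_{\lambda^{-k}\rho}(\lambda^{-k}y_0)$; for $\rho$ small these balls are pairwise disjoint, the sum is locally finite, and $u_0$ is singular exactly on the orbit $\{\lambda^{m}y_0:m\in\Z\}$ with $u_0\approx|x-\lambda^m y_0|^{-1}$ near each node.

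Next I would establish $u_0\in M^{2,1}$ by verifying $u_0\in L^2_\uloc$ and invoking Lemma \ref{lemma.dss.equiv}. For $|x_0|$ large, $B_1(x_0)$ meets at most one node (consecutive nodes are separated by $\gtrsim\lambda^m(\lambda-1)|y_0|\to\I$), so $\int_{B_1(x_0)}|u_0|^2\le\int_{B_1}|y|^{-2}\,dy+o(1)=4\pi+o(1)$. Near the origin the nodes accumulate, but the $L^2$ mass of the $k$-th shell is $\int\lambda^{2k}|g(\lambda^k x)|^2\,dx=\lambda^{-k}\|g\|_{L^2}^2$, with $\|g\|_{L^2}<\I$ because $|y-y_0|^{-1}\in L^2_\loc(\R^3)$, and $\sum_{k\ge0}\lambda^{-k}<\I$; this bounds $\int_{B_1(x_0)}|u_0|^2$ for $x_0$ in any fixed bounded set. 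Hence $\|u_0\|_{L^2_\uloc}<\I$, and Lemma \ref{lemma.dss.equiv} yields $u_0\in M^{2,1}$. (Alternatively, $g$ is a cutoff of $|y-y_0|^{-1}\in L^{3,\I}\subset M^{2,1}$ by \eqref{eq2}.)

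Finally I would show $u_0\notin E^2$ by contradicting the far-field decay of $E^2$ elements. The key is that along the orbit the copies are not merely comparable but, for large $m$, \emph{exactly equal}: choosing $m$ so large that $\lambda^m\rho/2\ge1$ and that the neighboring shells $k=-m\pm1$ miss $B_1(\lambda^m y_0)$, only the $k=-m$ term survives on $B_1(\lambda^m y_0)$, where moreover $\chi(\lambda^{-m}\cdot)\equiv1$, so that $u_0(x)=|x-\lambda^m y_0|^{-1}$ there and
\[
\int_{B_1(\lambda^m y_0)}|u_0|^2\,dx=\int_{B_1}|y|^{-2}\,dy=4\pi .
\]
On the other hand, if $u_0\in E^2$ and $\phi\in C^\infty_c$ satisfies $\|u_0-\phi\|_{L^2_\uloc}<\sqrt\pi$, then $\phi\equiv0$ on $B_1(x_0)$ once $|x_0|$ is large, forcing $\|u_0\|_{L^2(B_1(x_0))}<\sqrt\pi$ there; since $|\lambda^m y_0|\to\I$ while the displayed integral stays $4\pi>\pi$, this is impossible. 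Thus $u_0\notin E^2$, which proves \eqref{eq4}.

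I expect the main obstacle to be geometric bookkeeping rather than analysis: one must pick $\rho$ small relative to the annulus width $\sim(\lambda-1)$ so that the scaled seeds have disjoint supports and, crucially, so that exactly one scaled copy survives on each far-field unit ball; this is the delicate case when $\lambda$ is close to $1$. Once the supports are controlled, the $M^{2,1}$ bound near the origin reduces to a convergent geometric series, and the failure of $E^2$ is an immediate consequence of the exact scale invariance of $|x|^{-1}$.
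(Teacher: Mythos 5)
Your construction for \eqref{eq4} is correct, and it is essentially the paper's own example: the paper takes $f(x)=\sum_{k\in\Z}\la^k f_0(\la^k x)$ with $f_0(x)=|x-x_0|^{-1}\chi(x-x_0)$, $\chi$ the indicator of a small ball placed inside the fundamental annulus $\{1<|x|<\la\}$ (citing \cite{BT1}), while you use a smooth cutoff and then verify everything by hand. Your verification is sound: the scaled copies have supports in pairwise disjoint annuli, the $L^2_\uloc$ bound reduces to a geometric series, Lemma \ref{lemma.dss.equiv} upgrades $L^2_\uloc$ to $M^{2,1}$ for DSS functions, and the exact identity $\int_{B_1(\la^m y_0)}|u_0|^2\,dx=4\pi$ for large $m$, combined with the compact support of any candidate approximant $\phi\in C_c^\I$, rules out $u_0\in E^2$. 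This is, if anything, more self-contained than the paper's proof of \eqref{eq4}, which delegates the verification to \cite{BT1}.

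The genuine gap is that the lemma has three assertions and you prove only the third. You give no proof of \eqref{eq2} or \eqref{eq3}; in fact you invoke \eqref{eq2} as a known fact (``$|y-y_0|^{-1}\in L^{3,\I}\subset M^{2,1}$ by \eqref{eq2}''), which is circular in the context of proving this lemma. The inclusion $M^{2,1}\subset L^2_\uloc$ is immediate from \eqref{Mps} with $R=1$, and $L^{3,\I}\subset M^{2,1}$ follows from the distribution-function estimate $\int_E |f|^2 \lesssim \norm{f}_{L^{3,\I}}^2|E|^{1/3}$ of Lemma \ref{weakLq-into-Morrey}, so these could be dismissed quickly; but \eqref{eq3} requires a real argument that is nowhere in your proposal. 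The paper's proof: given $f\in L^{3,\I}$ and $\e>0$, choose $R$ so that $|\{|f|>\e\}\cap\{|x|>R\}|<\e^6$; for $|x_0|>R+1$ split $B_1(x_0)$ into the sets where $|f|\le \e$ and $|f|>\e$, and apply the same distribution-function estimate to the latter to get $\int_{B_1(x_0)}|f|^2\le C\e^2\bigl(1+\norm{f}_{L^{3,\I}}^2\bigr)$; this far-field vanishing of the local $L^2$ mass is exactly what places $f$ in $E^2$ (the same characterization of $E^2$ that you use in the opposite direction for \eqref{eq4}). Without \eqref{eq2} and \eqref{eq3} the proof of the lemma is incomplete.
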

\begin{proof}
The first inclusion of \eqref{eq2} follows from \eqref{eq1} with $n=3=q$ and $p=2$. The second inclusion follows from the definition of $M^{2,1}$ with $R=1$.

To show \eqref{eq3}, consider any $f \in L^{3,\infty}(\R^3)$. Let $M=\norm{f}_{L^{3,\infty}(\R^3)}$. 
For any $\e>0$, we can choose $R=R(\e)\gg 1$ such that
\[
  |\{x \in \R^3: |f(x)|>\e , \ |x|>R \}| < \e^6.
\]
For any $x_0$ with $|x_0|>R+1$, let
\[
E_< =B_1(x_0)\cap \{|f|\le \e \}, \quad E_>=B_1(x_0)\cap \{|f|> \e \}.
\]
By Lemma \ref{weakLq-into-Morrey}, we have
\EQN{
\int_{B_1(x_0)} |f|^2 &=\int_{E_<} |f|^2  + \int_{E_>} |f|^2
\\
&\le \e ^2 |B_1| + CM^2 |E_>|^{1/3}
\\
&\le C\e^2+CM^2\e^2.
}

To show \eqref{eq4}, consider
the example in \cite[(1.14)]{BT1}: 
\EQ{\label{eq6.6}
f(x) = \sum_{k \in \mathbb Z} \la^k f_0(\la^k x), \quad f_0(x)= \frac{1}{|x-x_0|} \chi(x-x_0),
}
where  $1+r<|x_0|<\la -r$ for some $r>0$, and $\chi$  is the characteristic function of the ball $B_r(0)$. It is in $M^{2,1}(\R^3)$ and is $\la$-DSS, but it is not in $E^2$.
\end{proof}

\emph{Remark.} The function $f$ given in \eqref{eq6.6} is not in $L^{3,\infty}(\R^3)$ as $L^{3,\infty}(\R^3)\subset E^2$.
Because the restrictions $f(\cdot+\la^{-k} x_0)|_{B_1(0)}$ are the same for all $k$ sufficiently large,  the oscillation of $f$ does not decay as considered in \cite{KwTs}.

\medskip

Our final fact is an example highlighting a subtle difference between $M^{2,1}$ and $L^{3,\I}$.  In Section \ref{sec.unique} we mentioned that
\[
\| e^{t\Delta}u_0\|_{L^4_\uloc}\lesssim t^{-1/8}\|u_0\|_{L^{3,\I}},
\]
but that this may fail if $L^{3,\I}$ is replaced by $M^{2,1}$. This complicated our adaptation of Jia's proof of uniqueness from \cite{Jia-uniqueness}. The following example confirms that this estimate does not hold generally when $u_0\in M^{2,1}$.
Note that, by \cite[Lemma 2.1]{Kato}, we have
\[
\norm{e^{t\Delta}u_0}_{L^4_\uloc} \lec \norm{e^{t\Delta}u_0}_{M^{4,1}} \lec 
t^{-1/4} \norm{u_0}_{M^{2,1}}.
\]
Thus $\norm{u(t)}_{L^4_\uloc}$ is always finite if $u_0 \in M^{2,1}$, and the estimate in question, \eqref{estimateQ}, may fail only for small $t$.

\medskip

\begin{example}\label{counterexample} 
Let $f$ be given by \eqref{eq6.6} and let $f_k(x) = \la^k f_0 (\la^k x)$.  Let $u_k = e^{t\Delta}f_k$ and $x_k = \la^{-k} x_0$. Notice that $u_k$ are all nonnegative.  
Assume that $t$ satisfies $\la^{-l} \leq \sqrt t < \la^{-l+1}$ where $l\in \N$.  
Also assume $1\leq k\leq l$ and $x\in B_l^{(k)} = B_{\la^{-l}} (x_k)$.
Consider 
\EQN{
\int \frac 1 {t^{3/2}} e^{-|y|^2/t} f_k(x-y) \,dy &=  \int \frac 1 {t^{3/2}} e^{-|y|^2/t} \frac 1 {|(x-y)-x_k|} \chi(\la^k(x-y)-x_0) \,dy.
}
Everything inside the integral is nonnegative and 
\[
 \chi(\la^k(x-y)-x_0)= \chi_{B_{r\la^{-k}} (x-x_k)}(y) \geq \chi_{B_{r\la^{-l}} (x-x_k)}(y),
\]
because $1\leq k\leq l$ and so
\[
B_{r/\la^{l}}(x-x_k)\subset B_{r/\la^k}(x-x_k).
\]
Hence
\EQN{
\int \frac 1 {t^{3/2}} e^{-|y|^2/t} f_k(x-y) \,dy  
&\geq   \int \frac 1 {t^{3/2}} e^{-|y|^2/t} \frac 1 {|(x-y)-x_k|}\chi_{B_{r\la^{-l}} (x-x_k)}(y) \,dy.
}
If $y\in B_{r/\la^{l}}(x-x_k)$, then
\[
|(x-y)-x_k| \leq C \la^{-l} \leq C \sqrt t.
\]
Hence, if $x\in B_l^{(k)}$, then 
\EQN{
\int \frac 1 {t^{3/2}} e^{-|y|^2/t} f_k(x-y) \,dy  
&\geq \frac C {\sqrt t}  \int_{B_{r\la^{-l}}(x-x_k) } \frac 1 {t^{3/2}} e^{-|y|^2/t}   \,dy 
\\&\geq \frac C {\sqrt t}  \int_{B_{r\la^{-l}/\sqrt t} (x/\sqrt t-x_k/\sqrt t) } e^{-|z|^2 }   \,dz 
\\&\geq \frac C {\la  \sqrt t} e^{-(r+1)^2} r^3 ,
}
where we let $z=y/\sqrt t$ and   used the fact that 
\EQN{ 
|z| \leq |z-(x-x_k)/\sqrt t| + |x-x_k|/\sqrt t\leq  \frac { r \la^{-l}} {\sqrt t} + \frac {\la^{-l}} {\sqrt t} \leq r+1,
} 
and 
\EQN{
|r\la^{-l}/\sqrt t|^3  \geq   \la^{-1} r^3.
}
Now, 
\EQ{
\int_{B_1} |e^{t\Delta} f|^4\,dx \gtrsim \sum_{k=1}^l \int_{B_l^{(k)}} |u_k(x,t)|^4 \,dx \gtrsim \sum_{k=1}^l \frac {| B_l^{(k)}|  } {\sqrt t ^4} \geq C(\la) \frac { |\log t|} {\sqrt t},
}
and it is therefore \emph{not} possible that 
\EQ{\label{estimateQ}
\| e^{t\Delta}f\|_{L^4_\uloc}\lesssim t^{-1/8}\|f\|_{M^{2,1}}.
}
As mentioned above, $f$ is obviously not in $L^{3,\I}$ because it is not in $E^2$. The above computations show it also fails to be in $L^{3,\I}$ locally. This illustrates how $M^{2,1}$ is locally weaker than $L^{3,\I}$.  This can also be checked directly. For $\si>0$ let
\[
E_\si = \bket{ |f(x)|>\si: \ x \in B_1}.
\]
For $l\in \N$, let $\si_l = \la^l r^{-1}$. Then 
\[
|E_{\si_l}| = \sum _{k=1}^l C  \la^{-3l} + \sum _{k>1}^l C \la^{-3k} \ge C l  \la^{-3l}.
\]
Thus
\[
\norm{f}_{L^{3,\infty}(B_1)} \ge 
\si_l |E_{\si_l}| ^{1/3} \ge C l^{1/3}.
\]
Taking $l \to \infty$, we get $\norm{f}_{L^{3,\infty}(B_1)} =\infty$. 
\end{example}

\section*{Acknowledgments}
The research of Tsai was partially supported by NSERC grant 261356-18 (Canada).

 Zachary Bradshaw, Department of Mathematics, University of Arkansas, Fayetteville, AR 72701, USA;
 e-mail: zb002@uark.edu
 \medskip
 
 Tai-Peng Tsai, Department of Mathematics, University of British
 Columbia, Vancouver, BC V6T 1Z2, Canada;
 e-mail: ttsai@math.ubc.ca

\end{document}